\documentclass[preprint, a4paper]{elsarticle}

\usepackage[utf8]{inputenc} 
\usepackage[T1]{fontenc}
\usepackage{a4}
\usepackage{amsfonts, amssymb, amsmath, amsthm}

\usepackage{enumitem}

\usepackage{mathtools} 
\usepackage{tensor} 
\usepackage{scalerel} 

\usepackage{perpage}
\MakePerPage{footnote}

\usepackage[ngerman, english]{babel}
\useshorthands{"}
\makeatletter
 \defineshorthand[ngerman]{"/}{\mbox{-}\bbl@allowhyphens}
\makeatother
\addto\extrasenglish{\languageshorthands{ngerman}}


\usepackage[x11names]{xcolor}
\usepackage[hyperfootnotes=false, unicode=true, linktocpage=true]{hyperref}
\hypersetup{
	pdfauthor={Seerp Roald Koudenburg},
	pdftitle={A categorical approach to the maximum theorem},
  pdfkeywords={Berge's maximum theorem, extreme value theorem, continuity, probabilistic approach space, Kan extension, double category},
  linkbordercolor={NavajoWhite1},
  citebordercolor={LightCyan2},
  urlbordercolor={DarkSeaGreen2}
}

\usepackage{tikz} 
\usetikzlibrary{decorations.markings, matrix, arrows, fit, calc}



\tikzstyle{map} = [->, font=\scriptsize]
\tikzstyle{linj} = [left hook->, font=\scriptsize]
\tikzstyle{rinj} = [right hook->, font=\scriptsize]
\tikzstyle{sur} = [->>, font=\scriptsize]
\tikzstyle{cell} = [double,double equal sign distance,-implies, shorten >= 3.75pt, shorten <= 3.75pt, font=\scriptsize]
\tikzstyle{eq} = [double,double equal sign distance]
\tikzstyle{ps} = [shorten >= 2pt]

\tikzstyle{iso} = [above, sloped, inner sep=1.5pt]
\tikzstyle{nat} = [above, sloped, inner sep=2pt]
\tikzstyle{desc} = [fill=white, inner sep=2pt]
\tikzstyle{dots} = [black, font=]

\tikzstyle{small} = [font=\scriptsize]

\tikzstyle{textbaseline} = [baseline=-2.8pt]

\tikzstyle{barred} = [decoration={markings, mark=at position 0.5 with {\draw[-] (0,-1.5pt) -- (0,1.5pt);}}, postaction ={decorate}]

\tikzstyle{math4} = [matrix of math nodes, row sep={3.5em,between origins}, column sep={4em,between origins}, text height=1.5ex, text depth=0.25ex, nodes in empty cells]
\tikzstyle{math35} = [math4, row sep={3.25em,between origins}, column sep={3.5em,between origins}]
\tikzstyle{minimath} = [matrix of math nodes, row sep={3em,between origins}, column sep={3.25em,between origins}, font=\scriptsize, text height=1ex, text depth=0.25ex, nodes in empty cells]
\tikzstyle{scheme} = [textbaseline, x=1.6em, y=1.6em, yshift=-2.4em, font=\scriptsize, text depth=0ex, every node/.style={overlay}, execute at end picture = { \useasboundingbox ($(current bounding box.north west) + (0,0.4em)$) rectangle ($(current bounding box.south east) - (0,0.4em)$); }]

\makeatletter
\def\slashedarrowfill@#1#2#3#4#5{%
  $\m@th\thickmuskip0mu\medmuskip\thickmuskip\thinmuskip\thickmuskip
   \relax#5#1\mkern-7mu%
   \cleaders\hbox{$#5\mkern-2mu#2\mkern-2mu$}\hfill
   \mathclap{#3}\mathclap{#2}%
   \cleaders\hbox{$#5\mkern-2mu#2\mkern-2mu$}\hfill
   \mkern-7mu#4$%
}
\def\rightslashedarrowfill@{%
  \slashedarrowfill@\relbar\relbar\mapstochar\rightarrow}
\newcommand\xslashedrightarrow[2][]{%
  \ext@arrow 0055{\rightslashedarrowfill@}{#1}{#2}}
\makeatother

\def\slashedrightarrow{\xslashedrightarrow{}}





\newcommand{\dashcirc}{\multimap}
\newcommand{\circdash}{\mathbin{\text{\reflectbox{$\multimap$}}}}


\newtheorem{theorem}{Theorem}[section]
\newtheorem{lemma}[theorem]{Lemma}

\newtheorem{proposition}[theorem]{Proposition}

\theoremstyle{definition}
\newtheorem{definition}[theorem]{Definition}

\theoremstyle{remark}
\newtheorem{remark}[theorem]{Remark}
\newtheorem{example}[theorem]{Example}


\providecommand{\defref}[1]{Definition~\ref{#1}}
\providecommand{\exref}[1]{Example~\ref{#1}}

\providecommand{\lemref}[1]{Lemma~\ref{#1}}
\providecommand{\propref}[1]{Proposition~\ref{#1}}

\providecommand{\thmref}[1]{Theorem~\ref{#1}}


\providecommand{\secref}[1]{Section~\ref{#1}}


\newcommand\defeq{\mathrel{\vcentcolon\Leftrightarrow}}
\newcommand\eqdef{\mathrel{\Leftrightarrow\vcentcolon}}
\renewcommand{\implies}{\Rightarrow}
\renewcommand{\iff}{\Leftrightarrow}


\providecommand{\dfn}{\coloneqq}

\providecommand{\of}{\circ}
\providecommand{\iso}{\cong}

\providecommand{\xrar}[1]{\xrightarrow{#1}}

\providecommand{\into}{\hookrightarrow}

\providecommand{\eps}{\varepsilon}

\newcommand{\dash}{\textup{--}}

\providecommand{\tens}{\otimes}

\providecommand{\mf}[1]{\mathfrak{#1}}

\providecommand{\tdiff}{\ominus}

\providecommand{\brcs}[1]{\lbrace #1 \rbrace}
\providecommand{\bigbrcs}[1]{\bigl\lbrace #1 \bigr\rbrace}
\providecommand{\brks}[1]{\lbrack #1 \rbrack}
\providecommand{\bigbrks}[1]{\bigl\lbrack #1 \bigr\rbrack}

\providecommand{\pars}[1]{\left(#1\right)}
\providecommand{\bigpars}[1]{\bigl(#1\bigr)}
\providecommand{\Bigpars}[1]{\Bigl(#1\Bigr)}


\providecommand{\set}[1]{\brcs{#1}}

\providecommand{\isect}{\cap}
\providecommand{\Isect}{\bigcap}
\providecommand{\union}{\cup}
\providecommand{\Union}{\bigcup}

\DeclareMathOperator\ForallOp{\mbox{\Large $\mathsurround0pt\forall$}}
\newcommand\Forall{\ForallOp\displaylimits}



\providecommand{\downset}{\mathord\downarrow}
\providecommand{\upset}{\mathord\uparrow}
\DeclareMathOperator{\Dn}{Dn}

\DeclareMathOperator{\DnDir}{DnDir}

\providecommand{\rev}[1]{#1^\circ}
\DeclareMathOperator{\Gr}{Gr}

\newcommand{\tn}{\mathbin\&}

\providecommand{\natarrow}{\Rightarrow}

\providecommand{\map}[3]{#1\colon#2\to#3}

\providecommand{\nat}[3]{#1\colon#2\natarrow#3}

\providecommand{\hmap}[3]{#1\colon#2\slashedrightarrow#3}

\providecommand{\inv}[1]{{#1}^{-1}}

\DeclareMathOperator{\im}{im}
\DeclareMathOperator{\id}{id}


\providecommand{\ladj}{\dashv}

\providecommand{\catvar}[1]{\mathcal{#1}}

\providecommand{\Dbl}{\mathsf{Dbl}}

\providecommand{\2}{\mathsf 2}

\providecommand{\K}{\catvar K}

\providecommand{\V}{\catvar V}

\providecommand{\Set}{\mathsf{Set}}

\providecommand{\Cat}{\mathsf{Cat}}
\providecommand{\enCat}[1]{#1\text-\Cat}

\providecommand{\Dbl}{\mathsf{Dbl}}
\providecommand{\lDbl}{\Dbl_\textup l}

\providecommand{\ThinEq}{\mathsf{ThinEquip}}
\providecommand{\lThinEq}{\ThinEq_\textup l}
\providecommand{\nThinEq}{\ThinEq_\textup n}
\providecommand{\strThinEq}{\ThinEq_\textup{str}}

\providecommand{\Top}{\mathsf{Top}}
\providecommand{\vTop}[1]{#1\text-\mathsf{Top}}
\providecommand{\PsTop}[1]{#1\text-\mathsf{PsTop}}
\providecommand{\PreTop}[1]{#1\text-\mathsf{PreTop}}
\providecommand{\ModTop}[1]{#1\text-\mathsf{ModTop}}
\providecommand{\ModPreTop}[1]{#1\text-\mathsf{ModPreTop}}

\providecommand{\App}{\mathsf{App}}

\providecommand{\Cls}[1]{#1\text-\mathsf{Cls}}
\providecommand{\PsCls}[1]{#1\text-\mathsf{PsCls}}
\providecommand{\PreCls}[1]{#1\text-\mathsf{PreCls}}
\providecommand{\ModCls}[1]{#1\text-\mathsf{ModCls}}
\providecommand{\ModPreCls}[1]{#1\text-\mathsf{ModPreCls}}


\providecommand{\Rel}{\mathsf{Rel}}
\providecommand{\enRel}[1]{#1\text-\Rel}
\providecommand{\MetRel}{\mathsf{MetRel}}

\DeclareMathOperator{\Mod}{\mathsf{Mod}}

\providecommand{\Prof}{\mathsf{Prof}}
\providecommand{\enProf}[1]{#1\text-\Prof}

\providecommand{\ModCat}[1]{#1\text-\mathsf{ModCat}}

\providecommand{\Gph}[1]{#1\text-\mathsf{Gph}}
\providecommand{\UGph}[1]{#1\text-\mathsf{UGph}}
\providecommand{\LGph}[1]{#1\text-\mathsf{LGph}}

\providecommand{\ModGph}[1]{#1\text-\mathsf{ModGph}}
\providecommand{\ModRGph}[1]{#1\text-\mathsf{ModRGph}}
\providecommand{\ModUGph}[1]{#1\text-\mathsf{ModUGph}}

\providecommand{\hc}{\odot}

\providecommand{\lhom}{\triangleleft}
\providecommand{\rhom}{\triangleright}



\begin{document}
	\title{A categorical approach to the maximum theorem}
	\author{Seerp Roald Koudenburg}
	\ead{roald@metu.edu.tr}
	\address{Mathematics Research and Teaching Group\\Middle East Technical University, Northern Cyprus Campus\\ 99738 Kalkanl\i, G\"uzelyurt, TRNC, via Mersin 10, Turkey}
	\begin{abstract}
		Berge's maximum theorem gives conditions ensuring the continuity of an optimised function as a parameter changes. In this paper we state and prove the maximum theorem in terms of the theory of monoidal topology and the theory of double categories.
		
		This approach allows us to generalise (the main assertion of) the maximum theorem, which is classically stated for topological spaces, to pseudotopological spaces and pretopological spaces, as well as to closure spaces, approach spaces and probabilistic approach spaces, amongst others. As a part of this we prove a generalisation of the extreme value theorem.
	\end{abstract}
	\begin{keyword}
		Berge's maximum theorem \sep extreme value theorem \sep continuity \sep probabilistic approach space \sep Kan extension \sep double category
		\MSC[2010] 49J27 \sep 49J45 \sep 54A05 \sep 54B30 \sep 54E70 \sep 18A40 \sep 18C20 \sep 18D05
	\end{keyword}
	\maketitle
	
	\setcounter{section}{-1} 
	\section{Introduction} \label{introduction}
	Berge's maximum theorem \cite{Berge59}, which is used in mathematical economics for instance, concerns a relation $\hmap JAB$ between topological spaces, which we regard as a subset $J \subseteq A \times B$, as well as a continuous map $\map dA{\brks{-\infty, \infty}}$ into the extended real line, as depicted in the following diagram.
	\begin{displaymath}
		\begin{tikzpicture}
			\matrix(m)[math35]{A & & B \\ & \brks{-\infty, \infty} & \\};
			\path[map]	(m-1-1) edge node[below left] {$d$} (m-2-2)
													edge[barred] node[above] {$J$} (m-1-3)
									(m-1-3) edge[dashed] node[below right] {$l$} (m-2-2);
		\end{tikzpicture}
	\end{displaymath}
	We may ``extend $d$ along $J$'' by ``optimising $d$ for each $y \in B$'', thus obtaining a map $\map lB{\brks{-\infty, \infty}}$ given by the suprema
	\begin{equation} \label{optimised}
		l(y) = \sup_{x \in \rev Jy} d(x),
	\end{equation}
	where $\rev Jy = \set{x \in A \mid (x, y) \in J}$ denotes the preimage of $y$ under $J$. The main assertion of the maximum theorem states that the optimised function $l$ is continuous as soon as the relation $J$ is `hemi"/continuous' and $\rev Jy \neq \emptyset$ for each $y \in B$. Among the conditions included in hemi"/continuity is the compactness of the preimages $\rev Jy$ so that, by the extreme value theorem, hemi"/continuity of $J$ implies that the suprema defining $l$ are attained as maxima---a consequence that is used in the classical proof of the maximum theorem.
	
	Regarding the ordered set $\brks{-\infty, \infty}$ as a category allows us to think of the suprema in \eqref{optimised} as being limits. In fact, we may consider the full optimised function $l$ as the `left Kan extension of $d$ along $J$', a construction that is fundamental to category theory. Recently it has been shown that, mostly in purely categorical settings, structure on a `morphism' $\map D{\mathcal A}{\mathcal M}$ carries over to Kan extensions of $D$ under certain conditions---the monoidal structure on a functor for instance, see \cite{Mellies-Tabareau08}, \cite{Koudenburg15a} and \cite{Weber16}. The maximum theorem can be thought of as fitting in the same scheme of results: it shows that the continuity of the map $d$ carries over to its left Kan extension $l$. In view of this, one might hope to discover a purely categorical result that, in the topological setting, recovers the classical maximum theorem while, when considered in other settings, allows us to obtain generalisations of the maximum theorem. This paper realises this hope to a large extent.
	
	Besides recognising optimised functions as Kan extensions, the second ingredient of our categorical approach to the maximum theorem is to regard topological structures as algebraic structures---a point of view that forms the basis of the study of `monoidal topology' \cite{Hofmann-Seal-Tholen14}. In the fundamental example for instance, one regards topologies on a set $A$ as closure operations, i.e.\ relations $\hmap c{PA}A$ between the powerset $PA$ of $A$ and $A$ itself: one defines $(S, x) \in c$ precisely if $x \in \bar S$, the closure of $S \subseteq A$. The axioms for a topology on $A$ then translate to three axioms on the `closure relation' $c$ and, by weakening or removing some of these axioms, generalisations of the notion of topological space are recovered, such as that of pretopological space~\cite{Choquet48} and closure space.
	
	The closure relation $\hmap c{PA}A$ above can be equivalently thought of as a map $\map c{PA \times A}{\set{\bot, \top}}$ taking values in the set $\set{\bot, \top}$ of truth values. A second way of generalising the notion of topological space, which is fundamental to monoidal topology, is to replace the set of truth values by a different set of values $\V$. In this way for instance, by considering $\brks{0, \infty}$"/valued closure relations $\map\delta{PA \times A}{\brks{0, \infty}}$, one recovers the notion of approach space \cite{Lowen97}, consisting of a set $A$ equipped with a point"/set distance $\delta(S, x) \in \brks{0, \infty}$ for each subset $S \subseteq A$ and point \mbox{$x \in A$}. Likewise, by allowing closure relations to take `distance distribution functions' \mbox{$\map\phi{\brks{0, \infty}}{\brks{0, 1}}$} as values, one obtains the notion of probabilistic approach space \cite{Lai-Tholen17b}.
	
	Besides closure operations, the notion of topology can be described algebraically in terms of ultrafilter convergence as well \cite{Barr70}: topologies on a set $A$ correspond precisely to convergence relations $\hmap\alpha{UA}A$ satisfying certain axioms, where $UA$ denotes the set of ultrafilters on $A$. As with closure operations, by weakening these axioms, or by considering $\V$"/valued convergence relations $\map\alpha{UA \times A}\V$, one recovers generalisations of the notion of topological space, such as the notions of pretopological space and (probabilistic) approach space, as well as that of pseudotopological space \cite{Choquet48}, amongst others. In our approach to the maximal theorem we will consider both closure relations and ultrafilter convergence relations, as well as the relationship between them. In our study of the latter we closely follow \cite{Lai-Tholen17a}.
	
	The language allowing us to naturally describe the relations between the two ingredients of our approach---Kan extensions and algebraic descriptions of topological structures---is that of double categories, in the sense of e.g.\ \cite{Grandis-Pare04}. The notion of double category extends that of category by considering two types of morphisms instead of the usual single type: e.g.\ between sets we will consider both functions $\map fAC$ as well as $\V$"/valued relations $\map J{A \times B}\V$. Throughout this paper the language of double categories will lead us in the right direction. At the start for instance, when we consider approach spaces (equipped with $\brks{0, \infty}$"/valued closure relations), it naturally leads us to consider Kan extensions that are `weighted' by $\brks{0, \infty}$"/valued relations $\map J{A \times B}\brks{0, \infty}$, instead of Kan extensions along ordinary relations $\hmap JAB$ as described above. Later it naturally leads to the generalisation of the notion of hemi"/continuous relation, as well as to the proper generalisation of Kan extensions ``whose suprema are attained by maxima''. Finally, the language of double categories leads us to the generalisations of the maximum theorem and of the extreme value theorem themselves.
	
	In closing this introduction we remark on some present restrictions of our approach. In recent work on the maximum theorem (e.g.\ \cite{Feinberg-Kasyanov15}), as well as in recent textbooks (e.g.\ \cite{Aliprantis-Border06}), the name `maximum theorem' is designated to a result that extends and generalises in two ways the main assertion described above:
	\begin{enumerate}[label=-]
		\item more generally, it concerns optimisations $\map kB\brks{-\infty, \infty}$ of the form 		\begin{displaymath}
			k(y) = \sup_{x \in \rev Jy} e(x, y)
		\end{displaymath}
		for each $y \in B$, where $\hmap JAB$ is a hemi"/continuous relation with graph $\Gr J \subseteq A \times B$ and $\map e{\Gr J}\brks{-\infty, \infty}$ is a continuous function;

		\item besides continuity of the optimised function $k$, it also proves the `upper hemi"/continuity' of the `solution relation' $\hmap{J^*}AB$ that is defined by 
		\begin{displaymath}
			(x, y) \in J^* \quad \defeq \quad (x, y) \in J \quad \text{and} \quad e(x, y) = ky.
		\end{displaymath}
	\end{enumerate}
	Investigating ways of incorporating these generalisations in the categorical approach presented here have to be left as a further study.
	
	\section*{Outline}
	We start in \secref{thin equipments} by recalling the language and basic theory of double categories, mostly from \cite{Grandis-Pare99} and \cite{Grandis-Pare04}. Guided by the classical setting of functions $\map fAC$ and relations $\hmap JAB$ between sets, we restrict to double categories whose cells, which describe the relations between the two types of morphism, are uniquely determined by their boundaries, and in which every `vertical morphism' $\map fAC$ induces two corresponding `horizontal morphisms' $\hmap{f_*}AC$ and $\hmap{f^*}CA$. Such double categories we will call `thin equipments'. Our main examples are the thin equipments $\enRel\V$, of relations \mbox{$\map J{A \times B}\V$} taking values in a `quantale' $\V$: loosely speaking, any ordered set $\V$ with ``enough structure to replace the set of truth values''. After recalling some examples of quantales, such as the quantale $\Delta$ of distance distribution functions, we recall the notion of `monoid' in a thin equipment. Monoids in $\enRel{\set{\bot, \top}}$ are ordered sets while monoids in $\enRel{\brks{0, \infty}}$ and $\enRel\Delta$ respectively recover the notions of generalised metric space \cite{Lawvere73} and probabilistic metric space \cite{Menger42}.
	
	In \secref{Kan extension section} the double categorical notion of Kan extension, introduced in \cite{Koudenburg14}, is considered in thin equipments. After describing Kan extensions between monoids in $\enRel\V$, we consider the classical situation of a Kan extension into $\brks{-\infty, \infty}$ whose suprema are attained as maxima, and generalise it in terms of a `Beck"/Chevalley condition' for Kan extensions. The main result of this section shows that, in a thin equipment, Kan extensions satisfying the Beck"/Chevalley condition are precisely the `absolute Kan extensions' of \cite{Grandis-Pare08}.
	
	Given a `monad' $T$ on a thin equipment, we start \secref{T-graphs} by recalling from e.g.\ \cite{Hofmann-Seal-Tholen14} the notions of `$T$"/graph' and `$T$"/category', as well as some related notions. For the ultrafilter monad $U$ extended to ordinary relations these notions recover those of pseudotopological space and topological space, as well as that of pretopological space. Extending $U$ to $\V$"/valued relations recovers to the notion of $\V$"/valued topological space \cite{Lai-Tholen17b} which, by taking $\V = \brks{0, \infty}$ and $\V = \Delta$, includes the notions of approach space and probabilistic approach space respectively. Likewise, by taking the powerset monad extended to $\V$"/valued relations we obtain the notion of $\V$"/valued (pre"/)closure space and several of its generalisations. As a variant on the main theorem of \cite{Lai-Tholen17a}, which shows that $\V$"/valued topological spaces correspond precisely to $\V$"/valued closure spaces whose closure relations $\map c{PA \times A}\V$ `preserve finite joins', the main result of this section establishes a similar correspondence between $\V$"/valued pretopological spaces and `finite"/join"/preserving' $\V$"/valued preclosure spaces.
	
	In \secref{modular T-graphs} we consider objects in a thin equipment that are equipped with compatible monoid and $T$"/graph structures. Following \cite{Tholen09} we call such objects `modular $T$"/graphs'. We prove that the correspondences described in \secref{T-graphs} lift to give correspondences between modular $\V$"/valued (pre"/)topological spaces and finite"/join"/preserving modular $\V$"/valued (pre"/)closure spaces.
	
	The generalisations of the maximum theorem given in \secref{maximum theorem} apply to Kan extensions $\map lBM$, between $T$"/graphs, that satisfy one of the following conditions. Either $l$ satisfies the Beck"/Chevalley condition, in the sense of \secref{Kan extension section}, or the $T$"/graph $M$ is `$T$"/cocomplete', as described in \secref{T-complete T-graphs}. The latter condition extends the $T$"/cocompleteness property considered in \cite{Hofmann-Seal-Tholen14}. Loosely speaking, the $T$"/graph structure of a $T$"/cocomplete modular $T$"/graph is completely determined by its `generic points': a modular approach space $A$ for example, equipped with both a generalised metric $A(x, y) \in \brks{0, \infty}$, where $x, y \in A$, and a $\brks{0, \infty}$"/valued ultrafilter convergence $\hmap\alpha{UA}A$, is $U$"/cocomplete whenever for every ultrafilter $\mf x$ on $A$ a generic point $x_0 \in A$ is chosen such that
	\begin{displaymath}
		\alpha(\mf x, y) = A(x_0, y)
	\end{displaymath}
	for all $y \in A$. We will describe how every `completely distributive' quantale $\V$ itself admits two $U$"/cocomplete modular $\V$"/valued topological space structures.
	
	In \secref{horizontal T-morphisms} the notions of lower and upper hemi"/continuity, for ordinary relations between topological spaces, are generalised to the notions of `$T$"/open' and `$T$"/closed' horizontal morphism $\hmap JAB$ between $T$"/graphs $A$ and $B$. Restricting ourselves to the extensions $P$ and $U$ of the powerset and ultrafilter monads to $\V$"/relations, we describe the relationship between $P$"/openness and $U$"/openness, as well as that between $P$"/closedness and $U$"/closedness, in terms of the correspondences between $\V$"/valued (pre"/)closure spaces and $\V$"/valued (pre"/)topological spaces given in \secref{T-graphs}.
	
	Finally in \secref{maximum theorem} we state and prove four generalisations of the classical maximum theorem, in terms of Kan extensions between $T$"/graphs. These come in pairs, one pair for `left' Kan extensions and the other for `right' Kan extensions: each pair either assumes that the Kan extension satisfies the Beck"/Chevalley condition, in the sense of \secref{Kan extension section}, or has a $T$"/cocomplete target, in the sense of \secref{T-complete T-graphs}. Besides showing how to recover the classical maximum theorem we describe its generalisations to preclosure spaces, approach spaces and probabilistic approach spaces.
	
	Generalising the classical extreme value theorem, in the last section we obtain conditions  ensuring the Beck"/Chevalley condition for Kan extensions between modular $\V$"/valued pseudotopological spaces.
	
	\section{Thin equipments} \label{thin equipments}
	In this preliminary section we consider the notion of thin equipment, which forms the main setting for this paper. As this notion is modeled to describe the interaction between functions $\map fAC$ and relations $\hmap JAB$ between sets, we start by briefly setting out some notation for relations. We think of a relation $\hmap JAB$ as a subset $J \subseteq A \times B$, and shorten $(x, y) \in J$ to $xJy$. We will write
	\begin{displaymath}
		JS = \set{y \in B \mid \exists x \in S\colon xJy}
	\end{displaymath} 
	for the $J$"/image of $S \subseteq A$; also we write $Jx \dfn J\set x$ for all $x \in A$. The \emph{reverse} $\hmap{\rev J}BA$ of $J$ is defined by
	\begin{displaymath}
		y\rev Jx \qquad \defeq \qquad xJy,
	\end{displaymath}
	allowing us to denote by $\rev JT$ the $J$"/preimage of $T \subseteq B$.
	
	Relations between $A$ and $B$ are ordered by inclusion; in fact, to describe the interplay between functions and relations it is useful to depict by a cell
	\begin{displaymath}
		\begin{tikzpicture}
			\matrix(m)[math35]{A & B \\ C & D \\};
			\path[map]	(m-1-1) edge[barred] node[above] {$J$} (m-1-2)
													edge node[left] {$f$} (m-2-1)
									(m-1-2) edge node[right] {$g$} (m-2-2)
									(m-2-1) edge[barred] node[below] {$K$} (m-2-2);
			\draw				($(m-1-1)!0.5!(m-2-2)$) node[rotate=-90] {$\leq$};
		\end{tikzpicture}
	\end{displaymath}
  the property that $(fx)K(gy)$ for every $xJy$. For example these cells allow us to formalise, in the definition of thin equipment below, the relation between a function $\map fAC$ and the two relations $\hmap{f_*}AC$ and $\hmap{f^*}CA$ that it induces, that are defined by
  \begin{displaymath}
  	x(f_*)y \qquad \defeq \qquad fx = y \qquad \eqdef \qquad y(f^*)x.
  \end{displaymath}
  
  Notice that cells like the one above can be composed both vertically and horizontally: any two vertically adjacent cells combine as on the left below while any two horizontally adjacent cells combine as on the right. Here $\hc$ denotes the usual composition of relations: $x(J \hc H)z$ precisely if $xJy$ and $yHz$ for some $y \in B$.
	\begin{equation} \label{composition of cells}
		\begin{tikzpicture}[textbaseline]
			\matrix(m)[math35]{A & B \\ C & D \\ G & H \\};
			\path[map]	(m-1-1) edge[barred] node[above] {$J$} (m-1-2)
													edge node[left] {$f$} (m-2-1)
									(m-1-2) edge node[right] {$g$} (m-2-2)
									(m-2-1) edge[barred] node[below] {$K$} (m-2-2)
									        edge node[left] {$h$} (m-3-1)
									(m-2-2) edge node[right] {$k$} (m-3-2)
									(m-3-1) edge[barred] node[below] {$L$} (m-3-2);
			\draw				($(m-1-1)!0.5!(m-2-2)$) node[rotate=-90] {$\leq$}
									($(m-2-1)!0.5!(m-3-2)$) node[rotate=-90] {$\leq$};
		\end{tikzpicture} \mapsto \begin{tikzpicture}[textbaseline]
			\matrix(m)[math35]{A & B \\ G & H \\};
			\path[map]	(m-1-1) edge[barred] node[above] {$J$} (m-1-2)
													edge node[left] {$hf$} (m-2-1)
									(m-1-2) edge node[right] {$kg$} (m-2-2)
									(m-2-1) edge[barred] node[below] {$K$} (m-2-2);
			\draw				($(m-1-1)!0.5!(m-2-2)$) node[rotate=-90] {$\leq$};
		\end{tikzpicture} \qquad \begin{tikzpicture}[textbaseline]
			\matrix(m)[math35]{A & B & E \\ C & D & F \\};
			\path[map]	(m-1-1) edge[barred] node[above] {$J$} (m-1-2)
													edge node[left] {$f$} (m-2-1)
									(m-1-2) edge[barred] node[above] {$H$} (m-1-3)
													edge node[right] {$g$} (m-2-2)
									(m-1-3) edge node[right] {$l$} (m-2-3)
									(m-2-1) edge[barred] node[below] {$K$} (m-2-2)
									(m-2-2) edge[barred] node[below] {$M$} (m-2-3);
			\draw				($(m-1-1)!0.5!(m-2-2)$) node[rotate=-90] {$\leq$}
									($(m-1-2)!0.5!(m-2-3)$) node[rotate=-90] {$\leq$};
		\end{tikzpicture} \mapsto \begin{tikzpicture}[textbaseline]
			\matrix(m)[math35]{A & E \\ C & F \\};
			\path[map]	(m-1-1) edge[barred] node[above] {$J \hc H$} (m-1-2)
													edge node[left] {$f$} (m-2-1)
									(m-1-2) edge node[right] {$g$} (m-2-2)
									(m-2-1) edge[barred] node[below] {$K \hc M$} (m-2-2);
			\draw				($(m-1-1)!0.5!(m-2-2)$) node[rotate=-90] {$\leq$};
		\end{tikzpicture}
	\end{equation}
  
  The preceding describes the prototypical thin equipment $\Rel$, of functions and relations between sets. It naturally gives rise to the following general definition of thin equipment.
  \begin{definition}
  	A \emph{thin equipment} $\K$ consists of a pair of categories $\K_\textup v = (\K_\textup v, \of, \id)$ and $\K_\textup h = (\K_\textup h, \hc, 1)$, on the same collection of objects, that is equipped with a collection $\K_\textup c$ of square"/shaped cells
  	\begin{equation} \label{cell}
			\begin{tikzpicture}[textbaseline]
				\matrix(m)[math35]{A & B \\ C & D, \\};
				\path[map]	(m-1-1) edge[barred] node[above] {$J$} (m-1-2)
														edge node[left] {$f$} (m-2-1)
										(m-1-2) edge node[right] {$g$} (m-2-2)
										(m-2-1) edge[barred] node[below] {$K$} (m-2-2);
				\draw				($(m-1-1)!0.5!(m-2-2)$) node[rotate=-90] {$\leq$};
			\end{tikzpicture}
		\end{equation}
  	each of which is uniquely determined by its boundary morphisms $f, g \in \K_\textup v$ and $J, K \in \K_\textup h$. This data is required to satisfy the following axioms:
  	\begin{enumerate}[label=-]
  		\item $\K_\textup c$ is closed under vertical and horizontal composition as depicted in \eqref{composition of cells} above;
  		\item $\K_\textup c$ contains identity cells as shown below, one for each $f \in \K_\textup v$ and one for each $J \in \K_\textup h$;
	  		\begin{displaymath}
  				\begin{tikzpicture}
						\matrix(m)[math35]{A & A \\ C & C \\};
						\path[map]	(m-1-1) edge[barred] node[above] {$1_A$} (m-1-2)
																edge node[left] {$f$} (m-2-1)
												(m-1-2) edge node[right] {$f$} (m-2-2)
												(m-2-1) edge[barred] node[below] {$1_C$} (m-2-2);
						\draw				($(m-1-1)!0.5!(m-2-2)$) node[rotate=-90] {$\leq$};
					\end{tikzpicture} \qquad \qquad \qquad \qquad \begin{tikzpicture}
						\matrix(m)[math35]{A & B \\ A & B \\};
						\path[map]	(m-1-1) edge[barred] node[above] {$J$} (m-1-2)
																edge node[left] {$\id_A$} (m-2-1)
												(m-1-2) edge node[right] {$\id_B$} (m-2-2)
												(m-2-1) edge[barred] node[below] {$J$} (m-2-2);
						\draw				($(m-1-1)!0.5!(m-2-2)$) node[rotate=-90] {$\leq$};
					\end{tikzpicture}
  			\end{displaymath}
  		\item the ordering on morphisms in $\K_\textup h$ that is induced by $\K_\textup c$ is separated, that is the existence of both cells below implies $J = K$;
  			\begin{equation} \label{separate}
  				\begin{tikzpicture}[textbaseline]
						\matrix(m)[math35]{A & B \\ A & B \\};
						\path[map]	(m-1-1) edge[barred] node[above] {$J$} (m-1-2)
																edge node[left] {$\id_A$} (m-2-1)
												(m-1-2) edge node[right] {$\id_B$} (m-2-2)
												(m-2-1) edge[barred] node[below] {$K$} (m-2-2);
						\draw				($(m-1-1)!0.5!(m-2-2)$) node[rotate=-90] {$\leq$};
					\end{tikzpicture} \qquad \qquad \qquad \qquad \begin{tikzpicture}[textbaseline]
						\matrix(m)[math35]{A & B \\ A & B \\};
						\path[map]	(m-1-1) edge[barred] node[above] {$K$} (m-1-2)
																edge node[left] {$\id_A$} (m-2-1)
												(m-1-2) edge node[right] {$\id_B$} (m-2-2)
												(m-2-1) edge[barred] node[below] {$J$} (m-2-2);
						\draw				($(m-1-1)!0.5!(m-2-2)$) node[rotate=-90] {$\leq$};
					\end{tikzpicture}	
  			\end{equation}
  		\item for each morphism $\map fAC$ in $\K_\textup v$ there are two morphisms $\hmap{f_*}AC$ and $\hmap{f^*}CA$ in $\K_\textup h$ such that the cells below exist.
  	\end{enumerate}
  	\begin{displaymath}
			\begin{tikzpicture}
				\matrix(m)[math35]{A & C \\ C & C \\};
				\path[map]	(m-1-1) edge[barred] node[above] {$f_*$} (m-1-2)
														edge node[left] {$f$} (m-2-1)
										(m-1-2) edge node[right] {$\id_C$} (m-2-2)
										(m-2-1) edge[barred] node[below] {$1_C$} (m-2-2);
				\draw				($(m-1-1)!0.5!(m-2-2)$) node[rotate=-90] {$\leq$};
			\end{tikzpicture} \quad \quad \begin{tikzpicture}
				\matrix(m)[math35]{A & A \\ A & C \\};
				\path[map]	(m-1-1) edge[barred] node[above] {$1_A$} (m-1-2)
														edge node[left] {$\id_A$} (m-2-1)
										(m-1-2)	edge node[right] {$f$} (m-2-2)
										(m-2-1) edge[barred] node[below] {$f_*$} (m-2-2);
				\draw				($(m-1-1)!0.5!(m-2-2)$) node[rotate=-90] {$\leq$};
			\end{tikzpicture} \qquad \qquad \begin{tikzpicture}
				\matrix(m)[math35]{C & A \\ C & C \\};
				\path[map]	(m-1-1) edge[barred] node[above] {$f^*$} (m-1-2)
										(m-1-2)	edge node[right] {$f$} (m-2-2)
										(m-1-1) edge node[left] {$\id_C$} (m-2-1)
										(m-2-1) edge[barred] node[below] {$1_C$} (m-2-2);
				\draw				($(m-1-1)!0.5!(m-2-2)$) node[rotate=-90] {$\leq$};
			\end{tikzpicture} \quad \quad \begin{tikzpicture}
				\matrix(m)[math35]{A & A \\ C & A \\};
				\path[map]	(m-1-1) edge[barred] node[above] {$1_A$} (m-1-2)
										(m-1-2) edge node[right] {$\id_A$} (m-2-2)
										(m-1-1) edge node[left] {$f$} (m-2-1)
										(m-2-1) edge[barred] node[below] {$f^*$} (m-2-2);
				\draw				($(m-1-1)!0.5!(m-2-2)$) node[rotate=-90] {$\leq$};
			\end{tikzpicture}
		\end{displaymath} 
  \end{definition}
  
  We call the morphisms of $\K_\textup v$ the \emph{vertical morphisms} of $\K$ and those of $\K_\textup h$ the \emph{horizontal morphisms}. Cells with identities as vertical morphisms, such as in \eqref{separate}, are called \emph{horizontal cells}; if either cell in \eqref{separate} exists then we write $J \leq K$ or $K \leq J$ respectively.
  
  For $\map fAC$ in $\K_\textup v$ we call the horizontal morphism $\hmap{f_*}AC$ the \emph{companion} of $f$ and $\hmap{f^*}CA$ the \emph{conjoint} of $f$. Notice that the companion and conjoint of $f$ are uniquely determined by the existence of the four cells above, as a consequence of the separated ordering on horizontal morphisms. It follows that $(g \of f)_* = f_* \hc g_*$ and $(g \of f)^* = g^* \hc f^*$ for composable morphisms $f$ and $g$, while $(\id_A)_* = 1_A = (\id_A)^*$; in short the assignments $f \mapsto f_*$ and $f \mapsto f^*$ are functorial. Given morphisms $\map fAC$, $\hmap KCD$ and $\map gBD$ we write
  \begin{displaymath}
  	K(f, g) \dfn f_* \hc K \hc g^*
  \end{displaymath}
	and call $K(f, g)$ the \emph{restriction of $K$ along $f$ and $g$}; notice that $1_C(f, \id) = f_*$ and $1_C(\id, f) = f^*$. In terms of restrictions the functoriality of companions and conjoints means that $K(f, g)(h, k) = K(h \of f, k \of g)$ and $K(\id, \id) = K$.
	
  When drawing cells we will often depict identity morphisms by the equal sign (=). Although the cells of a thin equipment are uniquely determined by their boundaries, often it will be useful to give them names. In those cases we will use greek letters $\phi$, $\psi$,\dots, as well as denoting vertical and horizontal composition of cells by $\of$ and $\hc$, while vertical and horizontal unit cells will be denoted by $1_f$ and $\id_J$ respectively.
  
   Besides the direct definition given above, a thin equipment can equivalently be defined as a flat strict double category, in the sense of Section~1 of \cite{Grandis-Pare99}, whose horizontal bicategory is locally skeletal and in which every vertical morphism has both a companion and conjoint (also called horizontal adjoint), the latter in the sense of Section~1 of \cite{Grandis-Pare04}. The term `equipment' originates from the term `proarrow equipment' used by Wood in \cite{Wood82} for structures closely related to ``double categories $\K$ with all companions and conjoints'': one can think of such $\K$ as equipping their underlying vertical bicategories with the `proarrows' of their underlying horizontal bicategories.
  
	\begin{example}
		Instead of the classical relations between sets $A$ and $B$ we will also consider \emph{metric relations} $\hmap JAB$, given by functions $\map J{A \times B}{\brks{0,\infty}}$. Composition of metric relations $\hmap JAB$ and $\hmap HBE$ is given by ``shortest path distance''
		\begin{displaymath}
			(J \hc H)(x, z) = \inf_{y \in B} J(x, y) + H(y, z).
		\end{displaymath}
		Together with functions between sets, metric relations form a thin equipment $\MetRel$ in which a cell as in \eqref{cell} exists precisely if $J(x, y) \geq K(fx, gy)$ for all $x \in A$ and $y \in B$.
	\end{example}
  
  \begin{example}
  	Generalising the previous example, relations between sets can take values in any `quantale' as follows. A \emph{quantale} $\V = (\V, \tens, k)$ is a complete lattice $\V$ equipped with a (not necessarily commutative) monoid structure $\tens$ with unit $k$, such that $\tens$ preserves suprema on both sides. Given a quantale $\V$, a \emph{$\V$"/relation} $\hmap JAB$ between sets $A$ and $B$ is a function $\map J{A \times B}\V$. The composite of $\V$"/relations $\hmap JAB$ and $\hmap HBE$ is given by ``matrix multiplication''
  	\begin{displaymath}
  		(J \hc H)(x, z) = \sup_{y \in B} J(x, y) \tens H(y, z);
  	\end{displaymath}
  	the identity $\V$-relations $\hmap{1_A}AA$ for this composition are given by $1_A(x, y) = k$ if $x = y$ and $1_A(x, y) = \bot$ if $x \neq y$, where $\bot = \sup \emptyset$ is the bottom element of $\V$. Functions and $\V$-relations between sets combine to form a thin equipment $\enRel\V$, in which a cell as in \eqref{cell} exists precisely if $J(x, y) \leq K(fx, gy)$ for all $x \in A$ and $y \in B$. Since the ordering on $\V$ is separated the ordering on parallel $\V$"/relations is separated as well. The companion $\hmap{f_*}AC$ and conjoint $\hmap{f^*}CA$ of a function $\map fAC$ are the $\V$"/relations given by $f_*(x, y) = k = f^*(y, x)$ if $fx = y$ and $f_*(x, y) = \bot = f^*(y, x)$ if $fx \neq y$. The restriction $K(f, g)$ of a $\V$"/relation $\hmap KCD$ along functions $\map fAC$ and $\map gBD$ is indeed given by restriction: $K(f, g)(x, y) = K(fx, gy)$ for all $x \in A$ and $y \in B$.
  	
  	If $\V$ is the two-chain $\2 = \brcs{\bot \leq \top}$ of truth values, equipped with the monoid structure $(\wedge, \top)$ given by conjunction, then $\enRel\2$ is isomorphic to the thin equipment $\Rel$ of relations, under the identification of ordinary relations $J \subseteq A \times B$ with $\2$"/relations $\map J{A \times B}\2$. If $\V$ is the \emph{Lawvere quantale} $\brks{0, \infty}$, equipped with the opposite order  $\geq$ and the monoid structure $(+, 0)$, then $\enRel{\brks{0, \infty}}$ coincides with the thin equipment $\MetRel$ of the previous example. Similarly the completion $\brks{-\infty, \infty}$ of $\mathbb R$, either with the natural order $\leq$ or with the reversed order $\geq$, forms a quantale under addition. As is customary, when referring to infima and suprema in $(\brks{0, \infty}, \geq)$ and $(\brks{-\infty, \infty}, \geq)$ we will always consider the natural order $\leq$.
  	
  	In the same vein the unit interval $\brks{0, 1}$, with its natural order, admits several monoid structures $\tn$ that make it into a quantale: one can take the usual multiplication $\tn = \times$ of real numbers, the frame operation $p \tn q = \min\set{p, q}$ or the \L ukasiewicz operation $p \tn q = \max\set{p+q-1,0}$. Notice that, besides preserving suprema in both variables, each of these multiplications is commutative and has unit $k = 1$: monoid structures on $\brks{0, 1}$ with these properties are known as \emph{left"/continuous t"/norms}.
  \end{example}
  
  \begin{example} \label{Delta}
  	A \emph{distance distribution function} is a function $\map \phi{\brks{0, \infty}}{\brks{0, 1}}$ satifying the left"/continuity condition $\sup_{s < t} \phi (s) = \phi(t)$ for all $t \in \brks{0, \infty}$. As a consequence $\phi$ preserves the natural order $\leq$, while $\phi(0) = 0$. Any left"/continuous t"/norm $\tn$ on $\brks{0,1}$ induces a quantale structure on the set $\Delta$ of all distance distribution functions: $\Delta$ inherits a pointwise ordering from $\brks{0,1}$ while its multiplication is given by the convolution product
  	\begin{displaymath}
  		(\phi \tens \psi)(t) = \sup_{r + s \leq t} \phi(r) \tn \psi(s), 
  	\end{displaymath}
  	for all $t \in \brks{0, \infty}$. The resulting quantales $\Delta_{\tn}$ share their unit $k$, which is given by $k(t) = 1$ for $t > 0$ and $k(0) = 0$, while their orderings fail to be linear.
  \end{example}
  
  \begin{example}
  	Any frame $\V$, that is a lattice such that $v \mapsto \min\set{v, w}$ preserves suprema for all $w \in \V$, can be regarded as a quantale with $v \tens w = \min\set{v, w}$.
  \end{example}
  
  By using companions and conjoints any general cell in a thin equipment corresponds to a couple of horizontal cells as follows.
	\begin{lemma} \label{horizontal cells}
		In a thin equipment consider morphisms as in the boundary of the cell below. The cell below exists if and only if $J \hc g_* \leq K(f, \id)$ if and only if $f^* \hc J \leq K(\id, g)$.
		\begin{displaymath}
			\begin{tikzpicture}
				\matrix(m)[math35]{A & B \\ C & D \\};
				\path[map]	(m-1-1) edge[barred] node[above] {$J$} (m-1-2)
														edge node[left] {$f$} (m-2-1)
										(m-1-2) edge node[right] {$g$} (m-2-2)
										(m-2-1) edge[barred] node[below] {$K$} (m-2-2);
				\draw				($(m-1-1)!0.5!(m-2-2)$) node[rotate=-90] {$\leq$};
			\end{tikzpicture}
		\end{displaymath}
	\end{lemma}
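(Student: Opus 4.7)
The proof is a standard calculation in the companion and conjoint calculus, and it is made painless by thinness: every cell of $\K$ is uniquely determined by its boundary, so each implication reduces to exhibiting a composite cell whose outer boundary matches what is required, with no coherence conditions to verify. My plan is to treat the first equivalence in detail and then remark that the second is symmetric, trading the four defining cells of companions for the four defining cells of conjoints.

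For the forward direction of the first equivalence, suppose the cell $\phi$ with boundary $(f, J, K, g)$ exists. Horizontally paste $\phi$ on the left with the second defining cell of $f_*$ (with top $1_A$, bottom $f_*$, left $\id_A$, right $f$) and on the right with the first defining cell of $g_*$ (with top $g_*$, bottom $1_D$, left $g$, right $\id_D$). The right leg of the leftmost piece equals the left leg of $\phi$, and the right leg of $\phi$ equals the left leg of the rightmost piece, so the triple horizontal composite is defined. Its outer vertical legs are $\id_A$ and $\id_D$, its top edge is $1_A \hc J \hc g_* = J \hc g_*$, and its bottom edge is $f_* \hc K \hc 1_D = K(f, \id)$, giving the desired horizontal inequality $J \hc g_* \leq K(f, \id)$.

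Conversely, given a horizontal cell $\alpha$ witnessing $J \hc g_* \leq K(f, \id) = f_* \hc K$, I build $\phi$ by stacking three cells vertically. On top I place the horizontal composite of $\id_J$ with the second defining cell of $g_*$ (top $1_B$, bottom $g_*$, left $\id_B$, right $g$), producing a cell from $J$ to $J \hc g_*$ with right leg $g$ and other legs identities. In the middle I place $\alpha$, whose top matches. At the bottom I place the horizontal composite of the first defining cell of $f_*$ (top $f_*$, bottom $1_C$, left $f$, right $\id_C$) with $\id_K$, producing a cell from $f_* \hc K$ to $K$ with left leg $f$ and other legs identities; its top matches the bottom of $\alpha$. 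The outer boundary of this vertical composite is $(f, J, K, g)$, so thinness identifies it with the required cell $\phi$.

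The second equivalence follows by the same pattern after swapping the roles: use the second defining cell of $f^*$ on the left and the first defining cell of $g^*$ on the right in the forward direction, and the remaining two conjoint cells to sandwich $\alpha$ in the backward direction; the outer top and bottom edges become $f^* \hc J$ and $K \hc g^* = K(\id, g)$. The only real obstacle is bookkeeping, namely selecting, at each paste, the defining cell whose vertical legs make the intended composition meaningful; once this is done, separatedness of the horizontal ordering takes care of everything else.
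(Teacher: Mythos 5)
Your proof is correct and is exactly the paper's argument, spelled out in full: the paper's proof consists of the single observation that composing the cell with the cells defining the companions and conjoints of $f$ and $g$ yields the horizontal cells, and that the cell is recovered from either horizontal cell in the same way, which is precisely your pasting scheme. One cosmetic point: your closing appeal to separatedness is unnecessary---only thinness (uniqueness of a cell with a given boundary), which you already invoked at the outset, is needed to identify the composites with the required cells.
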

	\begin{proof}
		By composing the cell above with the cells defining the companions and conjoints of $f$ and $g$ we obtain the horizontal cells that exhibit the inequalities. In the same way the cell can be recovered from either horizontal cell that exhibits one of the inequalities.
	\end{proof}
  
  As in any double category (see e.g.\ Section 11 of \cite{Shulman08}) one can consider monoids in a thin equipment, as follows.
  \begin{definition} \label{monoid}
  	Let $\K$ be a thin equipment.
  	\begin{enumerate}[label=-]
  		\item A \emph{monoid} $A = (A, \bar A)$ in $\K$ is an object $A$ equipped with a horizontal morphism $\hmap{\bar A}AA$ (which we will often denote by $A$) satisfying the associativity and unit axioms $\bar A \hc \bar A \leq \bar A$ and $1_A \leq \bar A$.
  		\item A vertical morphism $\map fAC$ between monoids is called a \emph{monoid homomorphism} if the cell on the left below exists.
  		\begin{displaymath}
  			\begin{tikzpicture}
					\matrix(m)[math35]{A & A \\ C & C \\};
					\path[map]	(m-1-1) edge[barred] node[above] {$\bar A$} (m-1-2)
															edge node[left] {$f$} (m-2-1)
											(m-1-2) edge node[right] {$f$} (m-2-2)
											(m-2-1) edge[barred] node[below] {$\bar C$} (m-2-2);
					\draw				($(m-1-1)!0.5!(m-2-2)$) node[rotate=-90] {$\leq$};
				\end{tikzpicture} \qquad \qquad \qquad \qquad \begin{tikzpicture}
					\matrix(m)[math35]{A & B \\ C & D \\};
					\path[map]	(m-1-1) edge[barred] node[above] {$J$} (m-1-2)
															edge node[left] {$f$} (m-2-1)
											(m-1-2) edge node[right] {$g$} (m-2-2)
											(m-2-1) edge[barred] node[below] {$K$} (m-2-2);
					\draw				($(m-1-1)!0.5!(m-2-2)$) node[rotate=-90] {$\leq$};
				\end{tikzpicture}
			\end{displaymath}
		\item A horizontal morphism $\hmap JAB$ between monoids is called a \emph{bimodule} if $\bar A \hc J \hc \bar B \leq J$.
		\item A cell between monoid homomorphisms and bimodules, as on the right above, is simply a cell in $\K$ between the underlying vertical and horizontal morphisms.
  	\end{enumerate}
  	The structure on $\K$ lifts to make monoids, their homomorphisms and bimodules, as well as the cells between those, into a thin equipment $\Mod(\K)$. The unit bimodule of a monoid $A$ is its structure morphism $\hmap{\bar A}AA$, while the companion and conjoint of a monoid morphism $\map fAC$ are the bimodules given by the restrictions $f_* = \bar C(f, \id)$ and $f^* = \bar C(\id, f)$. The restriction $K(f, g)$ of a bimodule $\hmap KCD$ along homomorphisms $\map fAC$ and $\map gBD$ coincides with the restriction $K(f, g)$ of the underlying horizontal morphism $K$ in $\K$, along the vertical morphisms underlying $f$ and $g$.
  \end{definition}
  
  \begin{example} \label{V-profunctors}
  	Being an ordered set we may regard any quantale $\V = (\V, \tens, k)$ as a category; the monoid structure $(\tens, k)$ then makes $\V$ into a monoidal category. In these terms monoids in $\enRel\V$ are precisely \emph{$\V$"/enriched categories}, in the usual sense of e.g.\ \cite{Kelly82}, while their homomorphisms are \emph{$\V$"/functors}. A bimodule $\hmap JAB$ is a \emph{$\V$"/bimodule} in the sense of Section 3 of \cite{Lawvere73}: a $\V$"/relation $\hmap JAB$ such that
  	\begin{displaymath}
  		A(x_1, x_2) \tens J(x_2, y_1) \tens B(y_1, y_2) \leq J(x_1, y_2)
  	\end{displaymath}
  	for all $x_1, x_2 \in A$ and $y_1, y_2 \in B$. Also called \emph{$\V$"/distributors}, we will call such bimodules \emph{$\V$"/profunctors}. We write $\enProf\V \dfn \Mod(\enRel\V)$.
  	
  	We remark that $\V$, as a monoidal category, is biclosed: the suprema preserving maps $x \tens \dash$ and $\dash \tens y$, where $x, y \in \V$, have right adjoints $x \dashcirc \dash$ and $\dash \circdash y$ defined by
  	\begin{equation} \label{inner homs}
  		y \leq x \dashcirc z \qquad \iff \qquad x \tens y \leq z \qquad \iff \qquad x \leq z \circdash y
  	\end{equation}
  	for all $x, y, z \in \V$ or, equivalently,
  	\begin{displaymath}
  		x \dashcirc z = \sup \set{v \in \V \mid x \tens v \leq z} \qquad \text{and} \qquad z \circdash y = \sup \set{v \in \V \mid v \tens y \leq z}.
  	\end{displaymath}
  	Both $\dashcirc$ and $\circdash$ can be used to enrich $\V$ over itself, resulting in two $\V$"/categories $\V_\dashcirc$ and $\V_{\circdash}$ with hom"/objects $\V_{\dashcirc}(x, y) = x \dashcirc y$ and $\V_{\circdash}(x, y) = x \circdash y$ respectively. If the monoid structure on $\V$ is commutative then the right adjoints $x \dashcirc \dash$ and $\dash \circdash x$ coincide.
  	
  	We will use the fact that the adjoints $x \dashcirc \dash$ and $\dash \circdash y$ induce right adjoints to the sup"/maps $J \hc \dash$ and $\dash \hc H$, for any $\V$"/relations $\hmap JAB$ and $\hmap HBE$. Denoting these adjoints by $J \lhom \dash$ and $\dash \rhom H$ respectively, they are defined by
  	\begin{displaymath}
  		H \leq J \lhom K \qquad \iff \qquad J \hc H \leq K \qquad \iff \qquad J \leq K \rhom H
  	\end{displaymath}
  	for all $\hmap JAB$, $\hmap HBE$ and $\hmap KAE$ or, equivalently,
  	\begin{flalign*}
  		&& (J \lhom K)(y, z) &= \inf_{x \in A} J(x, y) \dashcirc K(x, z)& \\
  		\text{and} && (K \rhom H)(x, y) &= \inf_{z \in E} K(x, z) \circdash H(y, z),&
  	\end{flalign*}
  	for all $x \in A$, $y \in B$ and $z \in E$.
	\end{example}
	\begin{example} \label{modular relation}
  	Monoids $A$ in $\enRel\2$, that is categories enriched in the set $\2$ of truth values, can be identified with \emph{ordered sets}, whose order relations $\hmap{\bar A}AA$ are transitive and reflexive, while homomorphisms of such monoids are order preserving maps. A $\2$"/profunctor $\hmap JAB$ between ordered sets is a \emph{modular relation} satisfying
  	\begin{displaymath}
  		x_1 \leq x_2, \quad x_2Jy_1 \quad \text{and} \quad y_1 \leq y_2 \quad \implies \quad x_1Jy_2,
  	\end{displaymath}
  	for all $x_1, x_2 \in A$ and $y_1, y_2 \in B$. The value of $y \dashcirc z$ in $\2$ is the Boolean truth value of the implication $y \to z$, so that the two ways of enriching $\2$ over itself simply recover the natural and reversed orderings of $\2$.
  \end{example}
  
  \begin{example}	\label{non-expansive relation}
  	If $\V$ is the Lawvere quantale $\brks{0, \infty}$ then a monoid $A$ in $\enRel\V$, that is a $\brks{0, \infty}$"/category, is a \emph{generalised metric space} in Lawvere's sense \cite{Lawvere73}, whose distance function $\map A{A \times A}{\brks{0, \infty}}$ satisfies
  	\begin{displaymath}
  		A(x, y) + A(y, z) \geq A(x, z) \qquad \text{and} \qquad A(x, x) = 0
  	\end{displaymath}
  	for all $x$, $y$ and $z \in A$, but which need not be symmetric. A $\brks{0, \infty}$"/functor $\map fAC$ is a \emph{non"/expansive map}, that satisfies $A(x,y) \geq C(fx, fy)$ for all $x, y \in A$, while a $\brks{0, \infty}$"/profunctor $\hmap JAB$ is a \emph{modular metric relation} $\map J{A \times B}{\brks{0, \infty}}$, satisfying
  	\begin{displaymath}
  		A(x_1, x_2) + J(x_2, y_1) + B(y_1, y_2) \geq J(x_1, y_2)
  	\end{displaymath}
  	for all $x_1, x_2 \in A$ and $y_1, y_2 \in B$. In $\V = \brks{0, \infty}$ the number $z \circdash y$ is the \emph{truncated difference} $z \tdiff y = \max \set{z - y, 0}$, so that the two ways of enriching $\brks{0, \infty}$ over itself equip it with the (non"/symmetric) metrics $\brks{0, \infty}_\dashcirc(x, y) = y \tdiff x$ and $\brks{0, \infty}_{\circdash}(x, y) = x \tdiff y$.
  \end{example}
  
  \begin{example}
  	The notion of metric space is further generalised by enriching over the extended real numbers $(\brks{-\infty, \infty}, \geq)$ instead, thus allowing negative distances as well. Willerton in \cite{Willerton15} uses such $\brks{-\infty, \infty}$"/categories in giving a category theoretic perspective of the Legendre"/Fenchel transform, while Lawvere in \cite{Lawvere84} takes a categorical approach to entropy using categories enriched over $(\brks{-\infty, \infty}, \leq)$.
  \end{example}
  
  \begin{example}
  	Analogous to the relation between metric spaces and $\brks{0, \infty}$"/categories, Flagg notes in \cite{Flagg97} (or see Section III.2.1 of \cite{Hofmann-Seal-Tholen14}) that \emph{probabilistic metric spaces}, originally introduced by Menger in \cite{Menger42}, can be regarded as categories enriched in the quantales $\Delta_{\tn}$ (\exref{Delta}) of distance distribution functions. Instead of real"/valued distances, any pair $(x, y)$ of points in a probabilistic metric space $A$ is equipped with a distance distribution function $A(x, y) \in \Delta$. For each $t \in \brks{0, \infty}$, the value $A(x, y)(t) \in \brks{0, 1}$ is to be thought of as the ``probability that the distance between $x$ and $y$ is less than $t$''.
  \end{example}
  
  We close this section by restricting to the setting of thin equipments the notions of lax functor between double categories and (vertical) transformation of such functors, both introduced in Section~7 of \cite{Grandis-Pare99}.
  \begin{definition} \label{functor and transformation}
  	A \emph{lax functor} $\map F\K\mathcal L$ between thin equipments $\K$ and $\mathcal L$ consists of a functor $\map{F_\textup v}{\K_\textup v}{\mathcal L_\textup v}$ (which will be denoted $F$) as well as an assignment of horizontal morphisms
  	\begin{displaymath}
  		\hmap JAB \qquad \mapsto \qquad \hmap{FJ}{FA}{FB}
  	\end{displaymath}
  that preserves horizontal composition laxly, that is
  	\begin{displaymath}
  		1_{FA} \leq F1_A \qquad \text{and} \qquad FJ \hc FH \leq F(J \hc H)
  	\end{displaymath}
  	for any object $A$ and composable morphisms $J$ and $H$ in $\K_\textup h$, such that the existence of any cell in $\K$ as on the left below implies the existence of the middle cell in $\mathcal L$.
  	\begin{displaymath}
			\begin{tikzpicture}
				\matrix(m)[math35]{A & B \\ C & D \\};
				\path[map]	(m-1-1) edge[barred] node[above] {$J$} (m-1-2)
														edge node[left] {$f$} (m-2-1)
										(m-1-2) edge node[right] {$g$} (m-2-2)
										(m-2-1) edge[barred] node[below] {$K$} (m-2-2);
				\draw				($(m-1-1)!0.5!(m-2-2)$) node[rotate=-90] {$\leq$};
			\end{tikzpicture} \qquad \qquad \qquad \begin{tikzpicture}
				\matrix(m)[math35]{FA & FB \\ FC & FD \\};
				\path[map]	(m-1-1) edge[barred] node[above] {$FJ$} (m-1-2)
														edge node[left] {$Ff$} (m-2-1)
										(m-1-2) edge node[right] {$Fg$} (m-2-2)
										(m-2-1) edge[barred] node[below] {$FK$} (m-2-2);
				\draw				($(m-1-1)!0.5!(m-2-2)$) node[rotate=-90] {$\leq$};
			\end{tikzpicture} \qquad \qquad \qquad \begin{tikzpicture}
				\matrix(m)[math35]{FA & FB \\ GA & GB \\};
				\path[map]	(m-1-1) edge[barred] node[above] {$FJ$} (m-1-2)
														edge node[left] {$\xi_A$} (m-2-1)
										(m-1-2) edge node[right] {$\xi_B$} (m-2-2)
										(m-2-1) edge[barred] node[below] {$GJ$} (m-2-2);
				\draw				($(m-1-1)!0.5!(m-2-2)$) node[rotate=-90] {$\leq$};
			\end{tikzpicture}
  	\end{displaymath}
  	
  	A \emph{transformation} $\nat\xi FG$ of lax functors $F$ and $\map G\K\mathcal L$ is a natural transformation $\nat{\xi_\textup v}{F_\textup v}{G_\textup v}$ (which will be denoted $\xi$) such that for every horizontal morphism $\hmap JAB$ in $\K$ the \emph{naturality cell} on the right above exists in $\mathcal L$.
  \end{definition}
  
  Thin equipments, lax functors and their transformations form a $2$"/category that we will denote $\lThinEq$; it is a full sub"/2"/category of the $2$"/category $\lDbl$ of double categories, lax functors and their transformations.
  
  A lax functor $\map F\K\mathcal L$ is called \emph{normal} if it preserves horizontal units strictly, that is $F1_A = 1_{FA}$ for all $A \in \K$; a \emph{strict functor} $\map F\K\mathcal L$ is a lax functor that preserves both units and horizontal compositions strictly. Notice that a lax functor $F$ is normal if and only if it preserves companions and conjoints, in the sense that $F(f_*) = (Ff)_*$ and $F(f^*) = (Ff)^*$ for all $\map fAC$ in $\K$. On the other hand any lax functor preserves restrictions, as the following restriction of Proposition~6.8 of \cite{Shulman08} to thin equipments shows.
  \begin{proposition}[Shulman] \label{lax functors preserve restrictions}
  	For any lax functor $\map F\K\mathcal L$ and morphisms $\map fAC$, $\hmap KCD$ and $\map gBD$ in $\K$ we have $F\bigpars{K(f, g)} = (FK)(Ff, Fg)$.
  \end{proposition}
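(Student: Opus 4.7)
The plan is to prove the equality $F\bigpars{K(f,g)} = (FK)(Ff, Fg)$ by establishing each inequality in turn and invoking separatedness of the order on parallel horizontal morphisms in $\mathcal L$.

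For the first inequality $F\bigpars{K(f,g)} \leq (FK)(Ff, Fg)$, I would exploit that the restriction $K(f,g) = f_* \hc K \hc g^*$ comes equipped with a canonical cell to $K$ with legs $f$ and $g$, obtained as the horizontal composite of the defining cell $f_* \to 1_C$ with legs $f, \id$, the identity cell on $K$, and the defining cell $g^* \to 1_D$ with legs $\id, g$ (using that $1_C \hc K \hc 1_D = K$). Applying the lax functor $F$ to this composite produces a cell $F\bigpars{K(f,g)} \to FK$ in $\mathcal L$ with legs $Ff$ and $Fg$; together with the characterisation of restrictions derivable from \lemref{horizontal cells} (namely that $J \leq (FK)(Ff, Fg)$ iff a cell $J \to FK$ with legs $Ff, Fg$ exists), this yields the desired inequality.

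For the reverse inequality, the key auxiliary facts are $(Ff)_* \leq F(f_*)$ and $(Fg)^* \leq F(g^*)$. To establish the first I would apply $F$ to the defining cell $1_A \to f_*$ with legs $\id_A, f$ and vertically precompose with the lax unit comparison $1_{FA} \leq F(1_A)$, obtaining a cell $1_{FA} \to F(f_*)$ with legs $\id_{FA}, Ff$; by \lemref{horizontal cells} this is equivalent to $(Ff)_* \leq F(f_*)$. The argument for the conjoint side is symmetric, starting from the defining cell $1_B \to g^*$ with legs $g, \id_B$. Combining these two comparisons with monotonicity of $\hc$ and the lax composition comparison then gives
\[(FK)(Ff, Fg) = (Ff)_* \hc FK \hc (Fg)^* \leq F(f_*) \hc FK \hc F(g^*) \leq F(f_* \hc K \hc g^*) = F\bigpars{K(f,g)}.\]

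The main subtlety is that, since $F$ is only lax, $F(f_*)$ need not coincide with $(Ff)_*$; the point is to recognise that the comparison $(Ff)_* \leq F(f_*)$ always holds and points in exactly the direction required by the lax composition comparison. Once this observation is in place, the remainder of the argument is a routine assembly using the companion and conjoint cells together with the lax structure of $F$, and no further obstacles arise.
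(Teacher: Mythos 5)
Your proof is correct and takes essentially the same route as the paper's: both directions are obtained from the $F$"/images of the companion and conjoint defining cells together with the lax structure cells $1_{FA} \leq F1_A$, $1_{FB} \leq F1_B$ and $F(f_*) \hc FK \hc F(g^*) \leq F(f_* \hc K \hc g^*)$, mediated by \lemref{horizontal cells}. The only (cosmetic) difference is that you package the reverse inequality through the explicit intermediate comparisons $(Ff)_* \leq F(f_*)$ and $(Fg)^* \leq F(g^*)$, which the paper's composite diagram encodes implicitly.
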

  \begin{proof}
  	To obtain $F\bigpars{K(f, g)} \leq (FK)(Ff, Fg)$ we apply $F$ to the composite of cells on the left below and compose the result with the appropriate cells among those that define $(Ff)_*$ and $(Fg)^*$.
  	\begin{displaymath}
  		\begin{tikzpicture}
  			\matrix(m)[math35]{A & C & D & B \\ C & C & D & D \\};
  			\path[map]	(m-1-1) edge[barred] node[above] {$f_*$} (m-1-2)
  													edge node[left] {$f$} (m-2-1)
  									(m-1-2) edge[barred] node[above] {$K$} (m-1-3)
  									(m-1-3) edge[barred] node[above] {$g^*$} (m-1-4)
  									(m-1-4) edge node[right] {$g$} (m-2-4)
  									(m-2-2) edge[barred] node[below] {$K$} (m-2-3);
  			\path				(m-1-2) edge[eq] (m-2-2)
  									(m-1-3) edge[eq] (m-2-3)
  									(m-2-1) edge[eq] (m-2-2)
  									(m-2-3) edge[eq] (m-2-4);
				\draw				($(m-1-1)!0.5!(m-2-2)$) node[rotate=-90] {$\leq$}
										($(m-1-3)!0.5!(m-2-4)$) node[rotate=-90] {$\leq$};
  		\end{tikzpicture} \qquad \begin{tikzpicture}
  			\matrix(m)[math35]{FA & FA & FC & FD & FB & FB \\ FA & FC & FC & FD & FD & FB \\};
  			\path[map]	(m-1-1) edge[barred] node[above] {$F1_A$} (m-1-2)
  									(m-1-2) edge[barred] node[above] {$(Ff)_*$} (m-1-3)
  													edge node[left, inner sep=1.5pt] {$Ff$} (m-2-2)
  									(m-1-3) edge[barred] node[above] {$FK$} (m-1-4)
  									(m-1-4) edge[barred] node[above] {$(Fg)^*$} (m-1-5)
  									(m-1-5) edge[barred] node[above] {$F1_B$} (m-1-6)
  									 				edge node[left, inner sep=1.5pt] {$Fg$} (m-2-5)
  									(m-2-1) edge[barred] node[below] {$F(f_*)$} (m-2-2)
  									(m-2-3) edge[barred] node[below] {$FK$} (m-2-4)
  									(m-2-5) edge[barred] node[below] {$F(g^*)$} (m-2-6);
  			\path				(m-1-1) edge[eq] (m-2-1)
  									(m-1-3) edge[eq] (m-2-3)
  									(m-1-4) edge[eq] (m-2-4)
  									(m-1-6) edge[eq] (m-2-6)
  									(m-2-2) edge[eq] (m-2-3)
  									(m-2-4) edge[eq] (m-2-5);
				\draw				($(m-1-1)!0.5!(m-2-2)$) node[rotate=-90] {$\leq$}
										($(m-1-2)!0.5!(m-2-3)$) node[rotate=-90] {$\leq$}
										($(m-1-4)!0.5!(m-2-5)$) node[rotate=-90] {$\leq$}
										($(m-1-5)!0.5!(m-2-6)$) node[rotate=-90] {$\leq$};
  		\end{tikzpicture}
  	\end{displaymath}
  	The inverse $(FK)(Ff, Fg) \leq F\bigpars{K(f, g)}$ is obtained by composing the composite on the right above, whose leftmost and rightmost cells are `$F$"/images' of cells defining $f_*$ and $g^*$ respectively, with the lax structure cells $1_{FA} \leq F1_A$, $1_{FB} \leq F1_B$ and $F(f_*) \hc FK \hc F(g^*) \leq F(f_* \hc K \hc g^*)$.
  \end{proof}
  
  We write $\strThinEq \subset \nThinEq \subset \lThinEq$ for the locally full sub"/2"/categories generated by the strict and normal functors respectively. The following is Proposition~11.12 of \cite{Shulman08} restricted to thin equipments.
	\begin{proposition}[Shulman] \label{2-functor Mod}
		The assignment $\K \mapsto \Mod(\K)$ of \defref{monoid} extends to a strict $2$-functor $\map\Mod\lThinEq\nThinEq$, which restricts to a $2$"/functor $\strThinEq \to \strThinEq$.
	\end{proposition}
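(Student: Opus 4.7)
The plan is to define $\Mod(F)$ for a lax functor $\map F\K{\mathcal L}$ and $\Mod(\xi)$ for a transformation, then to read off normality and strict $2$-functoriality from properties of $F$ and $\xi$. On objects I would set $\Mod(F)(A, \bar A) \dfn (FA, F\bar A)$; associativity of $F\bar A$ follows from composing the lax structure cell $F\bar A \hc F\bar A \leq F(\bar A \hc \bar A)$ with $F$ applied to the associativity cell $\bar A \hc \bar A \leq \bar A$, and the unit axiom $1_{FA} \leq F1_A \leq F\bar A$ is analogous. A monoid homomorphism $\map fAC$ is sent to $Ff$, which is again a homomorphism since $F$ sends the defining cell in $\K_\textup c$ to one of the required boundary in $\mathcal L_\textup c$. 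A bimodule $\hmap JAB$ maps to $FJ$, the bimodule inequality $F\bar A \hc FJ \hc F\bar B \leq F(\bar A \hc J \hc \bar B) \leq FJ$ following from two applications of the lax structure together with $F$ applied to the $\K$-bimodule axiom. Finally, since cells in $\Mod(\K)$ are by definition cells in $\K$ between the underlying data, the action of $F$ on cells carries over verbatim.

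Next I would check that $\Mod(F)$ is always normal, regardless of whether $F$ is. The unit bimodule of a monoid $(A, \bar A)$ in $\Mod(\K)$ is its structure morphism $\bar A$, so by construction $\Mod(F)$ sends this to $F\bar A$, which is literally the unit bimodule of the image monoid; hence horizontal units are preserved on the nose. By \propref{lax functors preserve restrictions} combined with the fact that companions and conjoints in $\Mod(\K)$ are the restrictions $\bar C(f, \id)$ and $\bar C(\id, f)$, it follows that $\Mod(F)$ also preserves companions and conjoints, giving a second confirmation of normality.

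For a transformation $\nat\xi FG$ I would set $\Mod(\xi)_{(A, \bar A)} \dfn \xi_A$. Vertical naturality in $\Mod(\K)$ is just vertical naturality of $\xi$ in $\K$, while the naturality cell required on a bimodule $\hmap JAB$ is the naturality cell of $\xi$ on the underlying horizontal morphism $J$; taking $J = \bar A$ in particular produces the cell exhibiting $\xi_A$ as a homomorphism $(FA, F\bar A) \to (GA, G\bar A)$. Strict functoriality of $\Mod$ on composition of lax functors and on composition of transformations is then immediate from the identities $\Mod(G \of F)(A, \bar A) = (GFA, GF\bar A) = \Mod(G)\Mod(F)(A, \bar A)$ at the level of objects and the analogous identities on morphisms, cells and components, together with the fact that the lax structure of $G \of F$ is built from those of $G$ and $F$.

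For the restriction $\strThinEq \to \strThinEq$, observe that if $F$ is strict then the lax structure cells $1_{FA} \leq F1_A$ and $FJ \hc FH \leq F(J \hc H)$ are equalities, so the horizontal composition $F\bar A \hc F\bar A$ equals $F(\bar A \hc \bar A)$ and likewise for units, making $\Mod(F)$ strict. I do not anticipate a genuine obstacle here: the whole argument is a bookkeeping exercise, made painless by the separated ordering on horizontal morphisms, which guarantees that every cell whose existence we need is automatically unique once its boundary is fixed.
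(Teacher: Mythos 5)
Your proof is correct and takes essentially the same route as the paper's (sketched) proof: $\Mod F$ applies $F$ indexwise, normality holds on the nose because the unit bimodule of $(A, \bar A)$ is $\bar A$ itself, and the naturality cells of $\xi$ exhibit each component as a monoid homomorphism. The additional verifications you supply (the bimodule axiom via two lax comparison cells, strictness of $\Mod F$ when $F$ is strict) are precisely the bookkeeping the paper leaves implicit.
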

	\begin{proof}[Sketch of the proof.]
		The image
		\begin{displaymath}
			\map{\Mod F}{\Mod(\K)}{\Mod(\mathcal L)}
		\end{displaymath}
		of a lax functor $\map F\K\mathcal L$ between thin equipments simply applies $F$ indexwise; e.g.\ it maps a monoid $A = (A, \bar A)$ in $\K$ to the monoid $(\Mod F)(A) \dfn (FA, F\bar A)$ in $\mathcal L$. Notice that $\Mod F$ is normal, while it is strict whenever $F$ is. The naturality cells of a transformation $\nat\xi FG$ ensure that, for every monoid $A$ in $\K$, the component $\map{\xi_A}{FA}{GA}$ is a homomorphism of monoids, so that these components combine to form a transformation $\nat{\Mod\xi}{\Mod F}{\Mod G}$.
	\end{proof}

  \section{Kan extensions in thin equipments} \label{Kan extension section}
  Using thin equipments as environment, in this section we describe the first ingredient of our categorical approach to the maximum theorem: the notion of Kan extension, which generalises that of optimised function. In the definition below we start by restricting the notion of left Kan extension in a general double category, that was introduced in \cite{Koudenburg14} under the name `pointwise left Kan extension', to thin equipments. Afterwards we will describe and study a type of Kan extension that generalises those optimised functions given by suprema that are attained as maxima, as described in the Introduction.
  \begin{definition}
  	Let $\map dAM$ and $\hmap JAB$ be morphisms in a thin equipment $\K$. The cell $\eta$ in the right"/hand side below defines $\map lBM$ as the \emph{left Kan extension of $d$ along $J$} if every cell in $\K$, of the form as on the left"/hand side, factors through $\eta$ as shown.
  	\begin{equation} \label{left Kan extension}
  		\begin{tikzpicture}[textbaseline]
				\matrix(m)[math35]{A & B & C \\ M & & M \\};
				\path[map]	(m-1-1) edge[barred] node[above] {$J$} (m-1-2)
														edge node[left] {$d$} (m-2-1)
										(m-1-2) edge[barred] node[above] {$H$} (m-1-3)
										(m-1-3) edge node[right] {$g$} (m-2-3);
				\path				(m-2-1) edge[eq] (m-2-3);
				\draw				($(m-1-1)!0.5!(m-2-3)$) node[rotate=-90] {$\leq$};
			\end{tikzpicture} = \begin{tikzpicture}[textbaseline]
				\matrix(m)[math35]{A & B & C \\ M & M & M \\};
				\path[map]	(m-1-1) edge[barred] node[above] {$J$} (m-1-2)
														edge node[left] {$d$} (m-2-1)
										(m-1-2) edge[barred] node[above] {$H$} (m-1-3)
														edge node[right] {$l$} (m-2-2)
										(m-1-3) edge node[right] {$g$} (m-2-3);
				\path				(m-2-1) edge[eq] (m-2-2)
										(m-2-2) edge[eq] (m-2-3);
				\draw				($(m-1-1)!0.5!(m-2-2)$) node {$\eta$}
										($(m-1-2)!0.5!(m-2-3)$) node[rotate=-90] {$\leq$};
			\end{tikzpicture}
  	\end{equation}
  	
  	Horizontally dual, the cell $\eps$ in the right"/hand side below defines $\map rAM$ as the \emph{right Kan extension of $\map eBM$ along $\hmap JAB$} if every cell in $\K$, of the form as on the left"/hand side, factors through $\eps$ as shown.
  	\begin{displaymath}
  		\begin{tikzpicture}[textbaseline]
				\matrix(m)[math35]{C & A & B \\ M & & M \\};
				\path[map]	(m-1-1) edge[barred] node[above] {$H$} (m-1-2)
														edge node[left] {$f$} (m-2-1)
										(m-1-2) edge[barred] node[above] {$J$} (m-1-3)
										(m-1-3) edge node[right] {$e$} (m-2-3);
				\path				(m-2-1) edge[eq] (m-2-3);
				\draw				($(m-1-1)!0.5!(m-2-3)$) node[rotate=-90] {$\leq$};
			\end{tikzpicture} = \begin{tikzpicture}[textbaseline]
				\matrix(m)[math35]{C & A & B \\ M & M & M \\};
				\path[map]	(m-1-1) edge[barred] node[above] {$H$} (m-1-2)
														edge node[left] {$f$} (m-2-1)
										(m-1-2) edge[barred] node[above] {$J$} (m-1-3)
														edge node[right] {$r$} (m-2-2)
										(m-1-3) edge node[right] {$e$} (m-2-3);
				\path				(m-2-1) edge[eq] (m-2-2)
										(m-2-2) edge[eq] (m-2-3);
				\draw				($(m-1-2)!0.5!(m-2-3)$) node {$\eps$}
										($(m-1-1)!0.5!(m-2-2)$) node[rotate=-90] {$\leq$};
			\end{tikzpicture}
  	\end{displaymath}
  \end{definition}
  
  For a quantale $\V$ and a $\V$"/functor $\map jAB$ the following proposition implies that, in the thin equipment $\enProf\V$ of $\V$"/profunctors, left Kan extensions along the companion $\hmap{j_*}AB$ coincide with $\V$"/enriched left Kan extensions along $j$ in the usual sense (see e.g.\ Section 4 of \cite{Kelly82}). The same holds for right Kan extensions along the conjoint $\hmap{j^*}BA$.
  \begin{proposition}\label{Kan extension expression}
  	Let $\V$ be a quantale, let $\map dAM$ and $\map lBM$ be $\V$"/functors and let $\hmap JAB$ be a $\V$"/profunctor. The $\V$"/functor $l$ is the left Kan extension of $d$ along $J$ in $\enProf\V$ precisely if
  	\begin{equation} \label{defining equation}
  		M(ly, z) = \inf_{x \in A} J(x,y) \dashcirc M(dx, z)
  	\end{equation}
  	for all $y \in B$ and $z \in M$. In particular if $M = \V_{\dashcirc}$ (\exref{V-profunctors}) then $l$ is given by
  	\begin{displaymath}
  		ly = \sup_{x \in A} dx \tens J(x,y)
  	\end{displaymath}
  	while if $M = \V_{\circdash}$ and $\V$ is commutative then $l$ is given by
  	\begin{displaymath}
  		ly = \inf_{x \in A} dx \circdash J(x,y).
  	\end{displaymath}
  	Dually a $\V$"/functor $\map rAM$ is the right Kan extension of a $\V$"/functor $\map eBM$ along $\hmap JAB$ precisely if
  	\begin{displaymath}
  		M(z, rx) = \inf_{y \in B} M(z, ey) \circdash J(x, y)
  	\end{displaymath}
  	for all $x \in A$ and $z \in M$. If $M = \V_{\circdash}$ then $r$ is given by
  	\begin{displaymath}
  		rx = \sup_{y \in B} J(x,y) \tens ey;
  	\end{displaymath}
  	if $M = \V_{\dashcirc}$ and $\V$ is commutative then $r$ is given by
  	\begin{displaymath}
  		rx = \inf_{y \in B} J(x,y) \dashcirc ey.
  	\end{displaymath}
  \end{proposition}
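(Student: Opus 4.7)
The plan is to reduce the double"/categorical Kan extension property to an adjunction equation between $\V$"/profunctors, which then unfolds pointwise to \eqref{defining equation}. First I would apply \lemref{horizontal cells} to $\eta$: using the identity $1_M(d, \id) = d_*$, the cell $\eta$ exists precisely when $J \hc l_* \leq d_*$. The same lemma applied to the outer cell of \eqref{left Kan extension} yields $J \hc H \hc g_* \leq d_*$, while applied to the right"/hand inner cell of the factorisation it yields $H \hc g_* \leq l_*$. Because cells in a thin equipment are uniquely determined by their boundaries, the existence of a factorisation reduces to the implication $J \hc H \hc g_* \leq d_* \implies H \hc g_* \leq l_*$, the converse being immediate from $J \hc l_* \leq d_*$. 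Specialising to $C = M$ and $g = \id_M$, so that $g_* = 1_M$, reduces this to the statement that for every $\hmap KBM$, $J \hc K \leq d_*$ implies $K \leq l_*$; the general case is recovered by applying this with $K \dfn H \hc g_*$.

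Combined with $J \hc l_* \leq d_*$, the resulting condition says exactly that $l_*$ is the largest horizontal morphism $\hmap KBM$ satisfying $J \hc K \leq d_*$. By the adjunction $J \hc \dash \dashv J \lhom \dash$ recalled in \exref{V-profunctors}, this is the single equation $l_* = J \lhom d_*$. Unfolding both sides pointwise using $l_*(y, z) = M(ly, z)$, $d_*(x, z) = M(dx, z)$ and the formula for $\lhom$ given in \exref{V-profunctors} yields precisely \eqref{defining equation}.

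The specialised formulas would then follow by substitution. For $M = \V_\dashcirc$ I would apply the identity $a \dashcirc (b \dashcirc c) = (b \tens a) \dashcirc c$ (immediate from $\tens \dashv \dashcirc$) together with the fact that $\dash \dashcirc z$ converts suprema in its first argument to infima; these rewrite \eqref{defining equation} as $ly \dashcirc z = \bigpars{\sup_x dx \tens J(x, y)} \dashcirc z$ holding for all $z \in \V$, whence separatedness of the order on $\V$ forces $ly = \sup_x dx \tens J(x, y)$. The case $M = \V_\circdash$ with $\V$ commutative is analogous, using the coincidence $x \dashcirc y = y \circdash x$. The right Kan extension half of the proposition is horizontally dual and would be treated by the same plan, with \lemref{horizontal cells} translating $\eps$ into $r^* \hc J \leq e^*$ and the universal property into $r^* = e^* \rhom J$.

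I expect the main obstacle to lie in the reformulation of the universal property itself: one must keep straight the interplay of companions and conjoints and verify that the restriction to $g = \id_M$ really captures the general case. Once this is done, the remaining steps are routine manipulations with the adjunctions $\tens \dashv \dashcirc$ and $\tens \dashv \circdash$, requiring only some care in the non"/commutative setting, where $\dashcirc$ and $\circdash$ behave differently.
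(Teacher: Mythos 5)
Your proposal is correct, and it takes a genuinely different route from the paper's own proof. The paper argues pointwise: the existence of $\eta$ is checked from $k \leq M(ly, ly)$, the factorisation property is verified on elements, and the converse direction is extracted by testing the universal property against an explicit cell over the unit $\V$"/category $*$, with $g(*) = z$ and $H(s,*) = v$ for $s = y$ and $\bot$ otherwise. You instead work structurally: \lemref{horizontal cells} turns the whole universal property into inequalities between profunctors, the substitution $K \dfn H \hc g_*$ legitimately reduces the quantification over pairs $(H, g)$ to the single case $C = M$, $g = \id_M$, and the residuation $J \hc \dash \ladj J \lhom \dash$ then packages the Kan extension property into the single equation $l_* = J \lhom d_*$, whose pointwise form is exactly \eqref{defining equation}; the dual half goes through verbatim with $r^* = e^* \rhom J$, and your treatment of the specialisations to $\V_{\dashcirc}$ and $\V_{\circdash}$ coincides with the paper's. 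What your route buys: it makes the content of the proposition transparent ($l_*$ is literally the right lifting of $d_*$ through $J$), and it avoids the test"/object construction altogether --- a real advantage, since the test morphism $H$ as literally written in the paper's sketch fails the bimodule axiom at $s \neq y$ and would need to be repaired to $H(s,*) = B(s,y) \tens v$. What it costs is one check you left implicit and should add: the adjunction of \exref{V-profunctors} is stated in $\enRel\V$, so to apply the maximality of $l_*$ among \emph{profunctors} to the candidate $J \lhom d_*$ you must verify that $J \lhom d_*$ is itself a bimodule. That is one line: from the counit $J \hc (J \lhom d_*) \leq d_*$ and the bimodule axioms for $J$ and $d_*$ one obtains $J \hc \bar B \hc (J \lhom d_*) \hc \bar M \leq d_*$, whence $\bar B \hc (J \lhom d_*) \hc \bar M \leq J \lhom d_*$ by adjunction. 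With that line added, your argument is complete.
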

  \begin{proof}[Sketch of the proof]
  	We sketch the proof for the left Kan extension $\map lBM$ of $\map dAM$ along $\hmap JAB$; the proof for right Kan extensions is analogous. For the `if'"/part first notice that the existence of the cell $\eta$ follows from the fact that
  	\begin{displaymath}
  		k \leq M(ly, ly) = \inf_{x \in A} J(x, y) \dashcirc M(dx, ly) \leq J(x, y) \dashcirc M(dx, ly)
  	\end{displaymath}
  	for all $x \in A$ and $y \in B$, combined with the definition of $\dashcirc$, see \eqref{inner homs}. To see that it satisfies the univeral property \eqref{left Kan extension} notice that, by the definitions of $\hc$ and $\dashcirc$, the cell on the left"/hand side of \eqref{left Kan extension} exists precisely if $H(y, z) \leq J(x,y) \dashcirc M(dx, gz)$ for all $x \in A$, $y \in B$ and $z \in C$, so that $H(y, z) \leq M(ly, gz)$ follows.
  	
  	For the `precisely'"/part assume that $l$ satisfies the universal property \eqref{left Kan extension}, let $y \in B$ and $z \in M$, and let $v \in \V$ be such that $v \leq J(x,y) \dashcirc M(dx, z)$ for all $x \in A$. To show that $v \leq M(ly, z)$ consider on the left"/hand side of \eqref{left Kan extension} the cell with $C = *$, the unit $\V$"/category with single object $*$ and hom"/object $*(*, *) = k$, $\map gCM$ given by $g(*) = z$ and $\hmap HBC$ given by $H(s,*) = v$ if $s = y$ and $H(s,*) = \bot$ otherwise. That this cell exists follows from the assumption on $v$, while factorising it through $\eta$ gives $v \leq M(ly, z)$.
  	
  	In the case that $M = \V_{\dashcirc}$ the equation defining $l$ reduces to
  	\begin{multline*}
  		ly \dashcirc z = \inf_{x \in A} \bigpars{J(x,y) \dashcirc (dx \dashcirc z)} \\ = \inf_{x \in A} \bigpars{\bigpars{dx \tens J(x,y)} \dashcirc z} = \bigpars{\sup_{x \in A} dx \tens J(x,y)} \dashcirc z,
  	\end{multline*}
  	for all $y \in B$ and $z \in M$, where we use that $\dash \dashcirc z$ transforms suprema into infima. Using the fact that $\V$ is separated we conclude that $ly$ and $\sup_{x \in A} dx \tens J(x,y)$ coincide for all $y \in B$.
  	
  	Finally if $M = \V_{\circdash}$ and $\V$ is commutative then the equation defining $l$ reduces to
  	\begin{multline*}
  		ly \circdash z = \inf_{x \in A} \bigpars{(dx \circdash z) \circdash J(x, y)} \\
  		= \inf_{x \in A} \bigpars{\bigpars{dx \circdash J(x,y)} \circdash z} = \bigpars{\inf_{x \in A} dx \circdash J(x,y)} \circdash z
  	\end{multline*}
  	for all $y \in B$ and $z \in M$, where we use that $\dash \circdash z$ preserves infima. As before $ly = \inf_{x \in A} dx \circdash J(x,y)$ follows.
  \end{proof}
  
  \begin{example} \label{Kan extensions in ordered sets}
  	Let $A$, $B$ and $M$ be ordered sets, $\map dAM$ a monotone map and $\hmap JAB$ a modular relation (\exref{modular relation}). Taking $\V = 2$ in the previous proposition, the defining equation \eqref{defining equation} of the left Kan extension $\map lBM$ of $d$ along $J$ reduces to
  	\begin{displaymath}
  		ly = \sup_{x \in \rev Jy} dx \qquad \text{where} \qquad \rev Jy = \set{x \in A \mid xJy},
  	\end{displaymath}
  	so that $l$ exists whenever these suprema exist. Dually the right Kan extension $\map rAM$ of a monotone map $\map eBM$ along $\hmap JAB$ is given by the infima
  	\begin{displaymath}
  		rx = \inf_{y \in Jx} ey \qquad \text{where} \qquad Jx = \set{y \in B \mid xJy}.
  	\end{displaymath}
  	
  	For general quantales $\V$ notice that, if $d$ is a $\V$"/functor $A \to \V_{\dashcirc}$, then by the previous proposition $\map lBM$ above can be regarded as a $\V$"/enriched Kan extension as well, along the $\V$"/profunctor $\hmap{J_\V}AB$ that is given by $J_\V(x,y) = k$ if $xJy$ and $J_\V(x,y) = \bot$ otherwise. Likewise if $\V$ is commutative and $e$ is a $\V$"/functor $B \to \V_{\dashcirc}$ then $\map rAM$ above is the \emph{left} Kan extension, in $\enProf\V$, of $e$ along the $\V$"/profunctor $\hmap{(\rev J)_\V}BA$.
  \end{example}
  
  Having introduced Kan extensions we now consider the notion of exact cell. The corresponding notion for general double categories, that was introduced in \cite{Koudenburg14} under the name `pointwise exact cell', generalises the classical notion of `exact square' of functors, as studied by Guitart in \cite{Guitart80}.
  \begin{definition}
  	The cell $\phi$ on the left below is called \emph{left exact} if, for any cell $\eta$ as in the middle that defines $l$ as a left Kan extension, the vertical composite $\eta \of \phi$ defines $l \of g$ as a left Kan extension. Dually $\phi$ is called \emph{right exact} if, for any cell $\eps$ below that defines $r$ as a right Kan extension, the composite $\eps \of \phi$ defines $r \of f$ as a right Kan extension.
  	\begin{displaymath}
  		\begin{tikzpicture}
				\matrix(m)[math35]{A & B \\ C & D \\};
				\path[map]	(m-1-1) edge[barred] node[above] {$J$} (m-1-2)
														edge node[left] {$f$} (m-2-1)
										(m-1-2) edge node[right] {$g$} (m-2-2)
										(m-2-1) edge[barred] node[below] {$K$} (m-2-2);
				\draw				($(m-1-1)!0.5!(m-2-2)$) node {$\phi$};
			\end{tikzpicture} \qquad \quad \qquad \begin{tikzpicture}
				\matrix(m)[math35]{C & D \\ M & M \\};
				\path[map]	(m-1-1) edge[barred] node[above] {$K$} (m-1-2)
														edge node[left] {$d$} (m-2-1)
										(m-1-2) edge node[right] {$l$} (m-2-2);
				\path				(m-2-1) edge[eq] (m-2-2);
				\draw				($(m-1-1)!0.5!(m-2-2)$) node {$\eta$};
			\end{tikzpicture} \qquad \quad \qquad \begin{tikzpicture}
				\matrix(m)[math35]{C & D \\ M & M \\};
				\path[map]	(m-1-1) edge[barred] node[above] {$K$} (m-1-2)
														edge node[left] {$r$} (m-2-1)
										(m-1-2) edge node[right] {$e$} (m-2-2);
				\path				(m-2-1) edge[eq] (m-2-2);
				\draw				($(m-1-1)!0.5!(m-2-2)$) node {$\eps$};
			\end{tikzpicture}
  	\end{displaymath}
  \end{definition}
  
  Notice that if the cell $\eta$ above is itself left exact then it defines $l$ as the \emph{absolute} left Kan extension of $d$ along $K$: for any morphism $\map kMN$ the composite $1_k \of \eta$ defines $k \of l$ as a left Kan extension. Likewise if $\eps$ above is right exact then it defines $r$ as an absolute right Kan extension.
  
  The following result restates Corollary 4.5 of \cite{Koudenburg14} in the setting of thin equipments. For each cell $\phi$ as on the left above we will call the hypotheses of the parts (a) and (b) below the left and right \emph{Beck"/Chevalley conditions} for $\phi$ respectively.
  \begin{proposition}
  	For a cell $\phi$ in a thin equipment, as on the left above, the following hold; compare \lemref{horizontal cells}.
  	\begin{enumerate}[label=\textup{(\alph*)}]
  		\item If $f^* \hc J = K(\id, g)$ then $\phi$ is left exact.
  		\item If $J \hc g_* = K(f, \id)$ then $\phi$ is right exact.
  	\end{enumerate}
  \end{proposition}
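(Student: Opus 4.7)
The plan is to prove (a) by translating every cell into an inequality between horizontal morphisms via \lemref{horizontal cells}, and then to obtain (b) by a parallel argument. The hypothesis of each part is engineered precisely so that the top row of a competitor cell can be rewritten to match that of the defining cell $\eta$ or $\eps$, at which point its universal property applies directly.

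For (a), suppose $\eta$ exhibits $\map lDM$ as the left Kan extension of $d$ along $K$. Any competitor cell for $l \of g$ being the left Kan extension of $d \of f$ along $J$, with arbitrary top morphism $\hmap HBE$ and right leg $\map hEM$, corresponds by \lemref{horizontal cells} to the inequality $J \hc H \hc h_* \leq f_* \hc d_*$. Precomposing with $f^*$ and collapsing via the counit $f^* \hc f_* \leq 1_C$ gives $f^* \hc J \hc H \hc h_* \leq d_*$; substituting the hypothesis $f^* \hc J = K \hc g^*$ rewrites this as $K \hc (g^* \hc H) \hc h_* \leq d_*$. The universal property of $\eta$, applied with $g^* \hc H$ in place of its horizontal argument, yields $g^* \hc H \hc h_* \leq l_*$. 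Precomposing with $g_*$ and using the unit $1_B \leq g_* \hc g^*$ finally delivers $H \hc h_* \leq g_* \hc l_* = (l \of g)_*$, which by \lemref{horizontal cells} is the desired factorisation cell. Uniqueness of the factorisation, and the verification that it recomposes with $\eta \of \phi$ to the original cell, are both automatic, since cells in a thin equipment are determined by their boundaries.

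Part (b) runs analogously: a competitor cell for $r \of f$ being the right Kan extension of $e \of g$ along $J$ corresponds to an inequality $H \hc J \hc g_* \hc e_* \leq (f')_*$; substituting the hypothesis $J \hc g_* = f_* \hc K$ yields $H \hc f_* \hc K \hc e_* \leq (f')_*$, invoking the universal property of $\eps$ gives $H \hc f_* \hc r_* \leq (f')_*$, and identifying $f_* \hc r_* = (r \of f)_*$ produces the required factorisation. No genuine obstacle arises: each hypothesis is tailored so that the units and counits of the companion-conjoint adjunctions act in the direction that makes the chain of inequalities close up, and the separatedness axiom handles uniqueness for free.
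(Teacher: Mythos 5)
Your proof is correct, and it is worth noting that the paper itself offers no in"/text proof of this proposition: it is stated as a restatement of Corollary~4.5 of \cite{Koudenburg14}, which is proved there for (pointwise) exact cells in general double categories, where cells are genuine data rather than mere properties of boundaries. Your argument is therefore different in character: it is an elementary, self"/contained verification that exploits thinness to collapse every universal property into an implication between inequalities of horizontal morphisms, using only \lemref{horizontal cells} and the adjunction inequalities $1_B \leq g_* \hc g^*$ and $f^* \hc f_* \leq 1_C$ coming from the defining cells of companions and conjoints. I checked the chains in both parts: in (a), the passage from $J \hc H \hc h_* \leq f_* \hc d_*$ through $f^* \hc J \hc H \hc h_* \leq d_*$, the substitution $f^* \hc J = K \hc g^*$, the application of the universal property of $\eta$ with $g^* \hc H$ as the extra horizontal argument, and the final whiskering by $g_*$ to reach $H \hc h_* \leq g_* \hc l_* = (l \of g)_*$ are all valid; in (b), the hypothesis $J \hc g_* = f_* \hc K$ substitutes directly and the universal property of $\eps$ applied with $H \hc f_*$ closes the argument. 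Two minor remarks, neither affecting correctness: first, in (a) the only genuinely new content of the hypothesis that you use is the inequality $K(\id, g) \leq f^* \hc J$, since the reverse inequality is, by \lemref{horizontal cells}, equivalent to the mere existence of $\phi$---this is worth making explicit, as it shows the Beck"/Chevalley hypothesis is exactly one inequality beyond the data already present; second, your closing sentence slightly overstates the role of units and counits, since in (b) no adjunction inequality is needed at all. Your observation that uniqueness of factorisations and their recomposition with $\eta \of \phi$ or $\eps \of \phi$ are automatic, because cells in a thin equipment are determined by their boundaries, is precisely the point that makes this direct approach so much lighter than the general double"/categorical one, where these verifications carry real content.
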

  
  As we shall see shortly, the following theorem describes a type of left Kan extension that generalises those optimised functions given by suprema that are attained by maxima. We will call its hypothesis the \emph{Beck"/Chevalley condition} for left Kan extensions. Horizontally dual, we say that a right Kan extension satisfies the \emph{Beck"/Chevalley condition} whenever its defining cell $\eps$ satisfies the right Beck"/Chevalley condition or, equivalently, $\eps$ satisfies the universal property that is horizontally dual to that described in the theorem below. In a general double category, right Kan extensions defined by cells satisfying such a universal property were introduced by Grandis and Par\'e in \cite{Grandis-Pare08}, where they were called `absolute right Kan extensions'.
  \begin{theorem} \label{Beck-Chevalley}
  	In a thin equipment consider a cell $\eta$ as in the right"/hand side below. It satisfies the left Beck"/Chevalley condition if and only if any cell as on the left"/hand side factors through $\eta$ as shown.
  	\begin{displaymath}
			\begin{tikzpicture}[textbaseline]
				\matrix(m)[math35]{A & B & E \\ M & & N \\};
				\path[map]	(m-1-1) edge[barred] node[above] {$J$} (m-1-2)
														edge node[left] {$d$} (m-2-1)
										(m-1-2) edge[barred] node[above] {$H$} (m-1-3)
										(m-1-3) edge node[right] {$g$} (m-2-3)
										(m-2-1) edge[barred] node[below] {$K$} (m-2-3);
				\draw				($(m-1-1)!0.5!(m-2-3)$) node[rotate=-90] {$\leq$};
			\end{tikzpicture} = \begin{tikzpicture}[textbaseline]
				\matrix(m)[math35]{A & B & E \\ M & M & N \\};
				\path[map]	(m-1-1) edge[barred] node[above] {$J$} (m-1-2)
														edge node[left] {$d$} (m-2-1)
										(m-1-2) edge[barred] node[above] {$H$} (m-1-3)
														edge node[right] {$l$} (m-2-2)
										(m-1-3) edge node[right] {$g$} (m-2-3)
										(m-2-2) edge[barred] node[below] {$K$} (m-2-3);
				\path				(m-2-1) edge[eq] (m-2-2);
				\draw				($(m-1-1)!0.5!(m-2-2)$) node {$\eta$}
										($(m-1-2)!0.5!(m-2-3)$) node[rotate=-90] {$\leq$};
			\end{tikzpicture}
  	\end{displaymath}
  	In particular, in this case $l$ is the absolute left Kan extension of $d$ along $J$.
  \end{theorem}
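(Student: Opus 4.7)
The plan is to translate everything into inequalities between horizontal morphisms via \lemref{horizontal cells}, and to exploit the thinness of $\K$ to reduce the condition ``factors through $\eta$'' to mere existence of the right"/hand factor: since both the given left"/hand cell and the horizontal composite of $\eta$ with any such factor share the same boundary, thinness forces them to coincide. In this language the left Beck-Chevalley condition for $\eta$ is the equation $d^* \hc J = l^*$; existence of the left"/hand cell in the display is the inequality $d^* \hc J \hc H \leq K \hc g^*$; and existence of the sought right"/hand factor (with top $H$, left $l$, right $g$, bottom $K$) is $l^* \hc H \leq K \hc g^*$. Note also that $\eta$ itself already supplies the inequality $d^* \hc J \leq l^*$.

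For the forward implication, assuming $d^* \hc J = l^*$, substituting into $d^* \hc J \hc H \leq K \hc g^*$ immediately yields $l^* \hc H \leq K \hc g^*$, producing the right"/hand factor; thinness then supplies the factorisation equation.

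For the converse, which I expect to be the main obstacle, the idea is to feed the factorisation property a tautological test cell whose factor encodes the missing inequality $l^* \leq d^* \hc J$. Concretely I would take $E = N = B$, $H = 1_B$, $g = \id_B$ and $K = d^* \hc J$, viewed as a horizontal morphism $M \to B$. The left"/hand cell then exists tautologically, since its defining inequality reduces to $d^* \hc J \leq d^* \hc J$, and so by hypothesis it factors through $\eta$; the resulting right"/hand factor exists precisely when $l^* \hc 1_B \leq d^* \hc J$, i.e.\ $l^* \leq d^* \hc J$. Recognising that a trivial left"/hand cell can extract the non"/trivial inequality through the factorisation is the one non"/formal step of the argument.

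For the final ``in particular'' clause I would show that the Beck-Chevalley condition transports along vertical composition with an arbitrary $\map kMN$: by functoriality of conjoints,
\begin{displaymath}
  (kd)^* \hc J = k^* \hc d^* \hc J = k^* \hc l^* = (kl)^*,
\end{displaymath}
so the vertical composite $1_k \of \eta$ again satisfies Beck-Chevalley. Applying the forward direction to this new cell and specialising its factorisation property to $K = 1_N$ exhibits $kl$ as the left Kan extension of $kd$ along $J$ for every $k$, which is precisely the absoluteness assertion.
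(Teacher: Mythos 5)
Your proposal is correct and follows essentially the same route as the paper's proof: both directions are reduced to inequalities between horizontal morphisms via \lemref{horizontal cells} (with thinness and separatedness handling the factorisation equation), and your converse uses exactly the paper's tautological test cell, taking $E = N = B$, $H = 1_B$, $g = \id_B$ and $K = d^* \hc J$, to extract $l^* \leq d^* \hc J$. Your explicit verification of the absoluteness clause, via $(kd)^* \hc J = k^* \hc d^* \hc J = k^* \hc l^* = (kl)^*$ applied to $1_k \of \eta$, merely unfolds what the paper obtains from its preceding proposition and remark on left exact cells, so there is no substantive difference.
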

  \begin{proof}
  	The `only if'"/part. Suppose that $\eta$ satisfies the left Beck"/Chevalley condition, that is $d^* \hc J = l^*$. We have to show that any cell as on the left"/hand side above factors through $\eta$ as shown. By \lemref{horizontal cells} we may equivalently prove that $l^* \hc H \leq K(\id, g)$. This is shown below, where the identity is the assumption on $\eta$ and the inequality follows from applying \lemref{horizontal cells} to the cell on the left"/hand side above.
  	\begin{displaymath}
  		l^* \hc H = d^* \hc J \hc H \leq K(\id, g)
  	\end{displaymath}
  	
  	The `if'"/part. Assuming that $\eta$ satisfies the universal property above, we have to show that $d^* \hc J = l^*$. Applying \lemref{horizontal cells} to $\eta$ gives $d^* \hc J \leq l^*$. For the reverse inequality apply the same lemma to the factorisation through $\eta$ in
  	\begin{displaymath}
  		\begin{tikzpicture}[textbaseline]
					\matrix(m)[math35]{A & A & B \\ M & A & B \\};
					\path[map]	(m-1-2) edge[barred] node[above] {$J$} (m-1-3)
											(m-1-1) edge node[left] {$d$} (m-2-1)
											(m-2-1) edge[barred] node[below] {$d^*$} (m-2-2)
											(m-2-2) edge[barred] node[below] {$J$} (m-2-3);
					\path				(m-1-3) edge[eq] (m-2-3)
											(m-1-1) edge[eq] (m-1-2)
											(m-1-2)	edge[eq] (m-2-2);
					\draw				($(m-1-1)!0.5!(m-2-2)$) node[rotate=-90] {$\leq$};
				\end{tikzpicture} = \begin{tikzpicture}[textbaseline]
					\matrix(m)[math35]{A & B & & B \\ M & M & A & B, \\};
					\path[map]	(m-1-1) edge[barred] node[above] {$J$} (m-1-2)
															edge node[left] {$d$} (m-2-1)
											(m-1-2) edge node[right] {$l$} (m-2-2)
											(m-2-3) edge[barred] node[below] {$J$} (m-2-4)
											(m-2-2) edge[barred] node[below] {$d^*$} (m-2-3);
					\path				(m-1-2) edge[eq] (m-1-4)
											(m-1-4)	edge[eq] (m-2-4)
											(m-2-1) edge[eq] (m-2-2);
					\draw				($(m-1-1)!0.5!(m-2-2)$) node {$\eta$}
											($(m-1-2)!0.5!(m-2-4)$) node[rotate=-90] {$\leq$};
				\end{tikzpicture}
  	\end{displaymath}
  	which exists by the assumption on $\eta$.
  \end{proof}
  
  \begin{example} \label{Beck-Chevalley for ordered sets}
  	Given ordered sets $A$, $B$ and $M$, let $\map dAM$ be a monotone map and $\hmap JAB$ a modular relation. If the left Kan extension $\map lBM$ of $d$ along $J$ exists then, as we saw in \exref{Kan extensions in ordered sets}, $l$ is given by
  	\begin{displaymath}
  		ly = \sup_{x \in \rev Jy} dx.
  	\end{displaymath}
  	It is easily checked that the Beck"/Chevalley condition for $l$ states that for every $y \in B$ there is $x \in \rev Jy$ with $dx = ly$, that is the suprema above are attained as maxima.
  \end{example}
  \begin{example} \label{Beck-Chevalley for metric spaces}
  	Given generalised metric spaces $A$, $B$ and $M$, let $\map dAM$ be a non"/expanding map and let $\hmap JAB$ be a modular metric relation (\exref{non-expansive relation}). By the proposition below the left Kan extension $\map lBM$ of $d$ along $J$, if it exists, satisfies the Beck"/Chevalley condition precisely when
  	\begin{displaymath}
  		\inf_{x \in A} M(ly, dx) + J(x, y) = 0
  	\end{displaymath}
  	for all $y \in B$.
  \end{example}
  
  If the map $d$ in \exref{Beck-Chevalley for ordered sets} above is a continuous map $\map dA\brks{-\infty, \infty}$, then the Beck"/Chevalley condition holds whenever the pre"/images $\rev Jy$, for each $y \in B$, are non"/empty and compact in $A$: this is a direct consequence of Weierstra\ss' extreme value theorem, see e.g.\ Corollary 2.35 of \cite{Aliprantis-Border06} or \thmref{extreme value theorem for ordered closure spaces} below. In \secref{extreme value theorem section} we will generalise the extreme value theorem to left Kan extensions of morphisms $\map dA{\V_{\dashcirc}}$ of `$\V$"/valued topological spaces', a notion that is recalled in the next section.
  
  \begin{proposition}\label{Beck-Chevalley condition}
  	Let $\V$ be a quantale, let $\map dAM$ be a $\V$"/functor and $\hmap JAB$ a $\V$"/profunctor. The left Kan extension $\map lBM$ of $d$ along $J$, if it exists, satisfies the Beck"/Chevalley condition precisely when
  	\begin{displaymath}
  		k \leq \sup_{x \in A} M(ly, dx) \tens J(x, y)
  	\end{displaymath}
  	for all $y \in B$.
  \end{proposition}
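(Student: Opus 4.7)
The plan is to unfold the Beck\"/Chevalley condition $d^* \hc J = l^*$ in $\enProf{\V}$ to a pointwise statement and then identify the displayed inequality as precisely what is missing, once the trivial direction has been extracted from the existence of $\eta$. Since $d^*(z, x) = M(z, dx)$ and $l^*(z, y) = M(z, ly)$ (as these are the restrictions $\bar M(\id, d)$ and $\bar M(\id, l)$), and since horizontal composition in $\enRel{\V}$ is matrix multiplication, the Beck\"/Chevalley condition reads
\begin{equation*}
	\sup_{x \in A} M(z, dx) \tens J(x, y) = M(z, ly) \qquad \text{for all } y \in B,\ z \in M.
\end{equation*}

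First I would observe that the inequality $\leq$ always holds. Indeed, applying \lemref{horizontal cells} to the defining cell $\eta$ of the left Kan extension (whose bottom horizontal morphism is $1_M$ and whose left and right verticals are $d$ and $l$) yields $d^* \hc J \leq 1_M(\id, l) = l^*$, which is the inequality above.

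Next, for the \emph{only if} direction, assuming Beck\"/Chevalley, I specialise the equation to $z = ly$ and use the unit axiom $k \leq M(ly, ly)$ of $M$ as a $\V$\"/category to conclude
\begin{equation*}
	k \leq M(ly, ly) = \sup_{x \in A} M(ly, dx) \tens J(x, y).
\end{equation*}

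For the \emph{if} direction, assuming the displayed inequality, I need the reverse inequality $M(z, ly) \leq \sup_{x \in A} M(z, dx) \tens J(x, y)$ for arbitrary $z \in M$. The key step is to tensor the hypothesis (which concerns only $y$) on the left by $M(z, ly)$, push the tensor inside the supremum (using that $\tens$ preserves suprema in each variable), and then absorb $M(z, ly) \tens M(ly, dx)$ into $M(z, dx)$ by the composition axiom of $M$:
\begin{align*}
	M(z, ly) &= M(z, ly) \tens k \\
	&\leq M(z, ly) \tens \sup_{x \in A} \bigpars{M(ly, dx) \tens J(x, y)} \\
	&= \sup_{x \in A} \bigpars{M(z, ly) \tens M(ly, dx) \tens J(x, y)} \\
	&\leq \sup_{x \in A} M(z, dx) \tens J(x, y).
\end{align*}
There is no real obstacle here; the argument is essentially a one\"/line calculation once one recognises that evaluating the Beck\"/Chevalley equation at $z = ly$ encodes all the content, and that the general case follows by tensoring with $M(z, ly)$ and applying the $\V$\"/category composition. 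The proof does not need to invoke the universal property of $\eta$ beyond the inequality $d^* \hc J \leq l^*$ supplied by \lemref{horizontal cells}.
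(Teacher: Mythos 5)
Your proposal is correct and follows essentially the same route as the paper's proof: both directions match, with the trivial inequality $d^* \hc J \leq l^*$ extracted from the universal cell via \lemref{horizontal cells}, the displayed condition obtained by evaluating the Beck"/Chevalley equation at $(ly, y)$ together with the unit axiom $k \leq M(ly, ly)$, and the converse established by exactly the paper's calculation tensoring with $M(z, ly)$ and absorbing via the composition axiom of $M$. No gaps.
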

  \begin{proof}
  	For the `when'"/part we have to show that $d^* \hc J = l^*$ follows from the inequality above. Firstly $d^* \hc J \leq l^*$ is obtained by applying \lemref{horizontal cells} to the universal cell defining $l$. That the reverse inequality follows from the inequality above is shown by
  	\begin{align*}
  		l^*(z, y) &= M(z, ly) = M(z, ly) \tens k \leq M(z, ly) \tens \bigpars{\sup_{x \in A} M(ly, dx) \tens J(x,y)} \\
  		&= \sup_{x\in A} \bigpars{M(z, ly) \tens M(ly, dx) \tens J(x, y)} \\
  		&\leq \sup_{x \in A} M(z, dx) \tens J(x, y) = (d^* \hc J)(z, y),
  	\end{align*}
  	where $y \in B$ and $z \in M$. The `precisely when'"/part follows easily from evaluating $l^* = d^* \hc J$ at $(ly, y)$, where $y \in B$, and using that $k \leq M(ly, ly)$ by the unit axiom for $M$.
  \end{proof}
  
  Kan extensions satisfying the Beck"/Chevalley condition are preserved by any strict functor, as follows.
  \begin{proposition} \label{image of Kan extension satisfying Beck-Chevalley}
  	Let $\map F\K\mathcal L$ be a normal lax functor between thin equipments. Given morphisms $\map dAM$ and $\hmap JAB$ in $\K$ suppose that their left Kan extension $\map lBM$ exists, and that it satisfies the Beck"/Chevalley condition. The image $Fl$ is the left Kan extension of $Fd$ along $FJ$, and it satisfies the Beck"/Chevalley condition, precisely if $F(d^*) \hc FJ = F(d^* \hc J)$.
  \end{proposition}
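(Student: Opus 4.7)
The plan is to reduce the biconditional to a direct translation of the Beck"/Chevalley conditions on the two sides, using that the normality of $F$ ensures $F$ preserves companions and conjoints, as noted just before \propref{lax functors preserve restrictions}. Recall that the Beck"/Chevalley condition on $l$ is the equation $d^* \hc J = l^*$, and analogously the Beck"/Chevalley condition on $Fl$ as a left Kan extension of $Fd$ along $FJ$ amounts to $(Fd)^* \hc FJ = (Fl)^*$.

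First I would observe that $F\eta$, where $\eta$ is the cell exhibiting $l$ as a left Kan extension, is the only possible candidate cell that could exhibit $Fl$ as a left Kan extension of $Fd$ along $FJ$: its bottom horizontal morphism is $F(1_M)$, which equals $1_{FM}$ by normality of $F$, so $F\eta$ has the required boundary, and cells in a thin equipment are uniquely determined by their boundaries. For the forward direction, if $Fl$ is the left Kan extension satisfying Beck"/Chevalley, then the defining cell is necessarily $F\eta$ and the condition reads $(Fd)^* \hc FJ = (Fl)^*$. Translating via normality, $(Fd)^* = F(d^*)$ and $(Fl)^* = F(l^*)$, combined with the Beck"/Chevalley condition $d^* \hc J = l^*$ for $l$, yields $F(d^*) \hc FJ = F(l^*) = F(d^* \hc J)$.

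For the converse, assume $F(d^*) \hc FJ = F(d^* \hc J)$. Since $l$ satisfies Beck"/Chevalley, the right"/hand side equals $F(l^*) = (Fl)^*$, while the left"/hand side equals $(Fd)^* \hc FJ$ by normality, so the candidate cell $F\eta$ satisfies the Beck"/Chevalley equation $(Fd)^* \hc FJ = (Fl)^*$. Then \thmref{Beck-Chevalley} immediately gives that $F\eta$ exhibits $Fl$ as the absolute left Kan extension of $Fd$ along $FJ$, and moreover that this Kan extension satisfies the Beck"/Chevalley condition. There is no serious obstacle: the argument is bookkeeping under normality together with the characterisation of \thmref{Beck-Chevalley}; the only point requiring care is checking that $F\eta$ has the correct boundary to serve as the defining cell of a Kan extension, which is precisely where the normality hypothesis enters.
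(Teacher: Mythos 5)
Your proof is correct and takes essentially the same route as the paper: both arguments translate the Beck"/Chevalley condition for $Fl$, namely $(Fd)^* \hc FJ = (Fl)^*$, into the stated equality via normality (so that $F$ preserves companions and conjoints) together with the $F$"/image of the condition $d^* \hc J = l^*$ for $l$, and both rely on \thmref{Beck-Chevalley} to conclude in the converse direction that the Beck"/Chevalley equality already forces $Fl$ to be the (absolute) left Kan extension. Your explicit check that $F\eta$ is the unique candidate cell, using $F1_M = 1_{FM}$ and thinness, is left implicit in the paper but is exactly the right bookkeeping.
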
 
	\begin{proof}
		Since normal lax functors preserve companions we have
		\begin{displaymath}
			(Fd)^* \hc FJ = F(d^*) \hc FJ \leq F(d^* \hc J) = F(l^*) = (Fl)^*,
		\end{displaymath}
		where the second equality is the $F$"/image of the Beck"/Chevalley condition for $l$. The result follows directly from noticing that the Beck"/Chevalley condition for $Fl$ means that $(Fd)^* \hc FJ = (Fl)^*$.
	\end{proof}
  
  \section{\texorpdfstring{$T$"/}{T-}graphs} \label{T-graphs}
  Here we describe the second ingredient of our categorical approach to the maximum theorem: expressing topological structures as algebraic structures. More precisely, taking the view"/point of the study of monoidal topology \cite{Hofmann-Seal-Tholen14}, we will regard topological structures as `graphs' or `categories' over a monad on a thin equipment.
  
  \begin{definition}
  	A \emph{lax monad} $T$ on a thin equipment $\K$ is simply a monad $T = (T, \mu, \iota)$ on $\K$ in the $2$"/category $\lThinEq$, consisting of a lax endofunctor $\map T\K\K$ equipped with multiplication and unit transformations $\nat\mu{T^2}T$ and $\nat\iota{\id_\K}T$ that satisfy the usual associativity and unit axioms. We call $T$ \emph{normal} or \emph{strict} whenever its underlying endofunctor is normal or strict.
  \end{definition}
  Notice that any lax monad $T$ on a thin equipment $\K$ restricts to a monad $T_\textup v$ on the vertical category $\K_\textup v$. In particular, in the case $\K = \enRel\V$ for some quantale $\V$, the lax monad $T$ can be thought of as being a ``lax extension'' of the $\Set$"/monad $T_\textup v$ to the thin $2$"/category of $\V$"/relations. The latter is the traditional view"/point taken in monoidal topology; that such lax extensions are equivalent to lax monads on $\enRel\V$, in our sense, is shown in Section~III.1.13 of \cite{Hofmann-Seal-Tholen14}.
  
  For general constructions of lax extensions of $\Set$"/monads to $\V$"/relations we refer to \cite{Clementino-Hofmann04} (or see Section~IV.2.4 of \cite{Hofmann-Seal-Tholen14}) and \cite{Seal05}. Here we restrict ourselves to the extensions of the powerset monad and the ultrafilter monad, which are recalled in the examples below. 
  \begin{example} \label{powerset monad}
  	We denote by $PA = \set{S \subseteq A}$ the powerset of a set $A$. The assignment $A \mapsto PA$ extends to the \emph{powerset monad} on $\Set$ that is given by
		\begin{displaymath}
			\map{Pf}{PA}{PC}\colon S \mapsto fS; \quad\! \map{\mu_A}{P^2A}{PA}\colon \Sigma \mapsto \Union \Sigma; \quad\! \map{\iota_A}A{PA}\colon x \mapsto \set{x},
		\end{displaymath}
		where $\map fAC$ is any function. It was shown by Clementino and Hofmann (Section~6.3 of \cite{Clementino-Hofmann04}) that $P$ extends to a lax monad on $\enRel\V$, by mapping a $\V$"/relation $\hmap JAB$ to
		\begin{displaymath}
			(PJ)(S, T) = \inf_{t \in T}\sup_{s \in S} J(s, t),
		\end{displaymath}
		for any $S \in PA$ and $T \in PB$. In case $\V = 2$, so that we can regard $J$ and $PJ$ as ordinary relations, this reduces to
		\begin{displaymath}
			S (PJ) T \quad \iff \quad T \subseteq JS.
		\end{displaymath}
		Notice that $P$ is not normal.
  \end{example}
  
  The following example describes the lax extensions of the ultrafilter monad $U$. To be able to extend the ultrafilter monad $U$ to $\V$"/relations the quantale $\V$ needs to be `completely distributive', as follows. Writing $\Dn\V$ for the set of \emph{downsets} $S \subseteq \V$, satisfying
  \begin{displaymath}
  	u \leq v \quad \text{and} \quad v \in S \quad \implies \quad u \in S
  \end{displaymath}
  for all $u, v \in \V$, the quantale $\V$ is called \emph{completely distributive} if $\map{\sup}{\Dn\V}\V$ has a left adjoint $\Downarrow$. In that case let the \emph{totally below relation} $\ll$ on $\V$ be defined by $u \ll v \defeq u \in {\Downarrow} v$; equivalently
	\begin{displaymath}
		u \ll v \qquad \iff \qquad \Forall\displaylimits_{S \subseteq \V} \bigpars{v \leq \sup S \implies \exists s \in S\colon u \leq s}.
	\end{displaymath}
	Writing $\map{\downarrow}\V{\Dn\V}$ for the map that sends $v \in \V$ to the \emph{principal downset} ${\downarrow}v = \set{u \in \V \mid u \leq v}$, it follows from the chain of adjunctions ${\Downarrow} \ladj \sup \ladj {\downarrow}$ that $v = \sup\set{u \in \V \mid u \ll v}$ for all $v \in \V$; for details see e.g.\ Section~II.1.11 of \cite{Hofmann-Seal-Tholen14}.
	
The two"/chain quantale $\2 = \set{\bot \leq \top}$ is completely distributive, with the totally below relation given by $u \ll v \iff v = \top$, and so are the quantales $(\brks{0, \infty}, \geq)$ and $(\brks{-\infty, \infty}, \geq)$, both with $u \ll v \iff u > v$. That the quantales $\Delta_{\tn}$ of distance distribution functions (\exref{Delta}) are completely distributive is shown in Section~2.1 of \cite{Hofmann-Reis13}.
	\begin{example} \label{ultrafilter monad}
		For a set $A$ we denote by $UA$ the set of \emph{ultrafilters} on $A$; see e.g.\ Section~II.1.13 of \cite{Hofmann-Seal-Tholen14}. The assignment $A \mapsto UA$ extends to the \emph{ultrafilter monad} $U = (U, \mu, \iota)$ on $\Set$ defined by
		\begin{displaymath}
			T \in (Uf)(\mf x) \defeq \inv f T \in \mf x; \qquad S \in \mu_A(\mf X) \defeq S^\sharp \in \mf X; \qquad S \in \iota_A(x) \defeq x \in S,
		\end{displaymath}
		where $\map fAC$, $\mf x \in UA$, $T \subseteq C$, $\mf X \in U^2A$, $S \subseteq A$ and $x \in A$; here $S^\sharp$ is the set of all ultrafilters on $S \subseteq A$:
		\begin{displaymath}
			\mf x \in S^\sharp \quad \defeq \quad S \in \mf x.
		\end{displaymath}
		
		In Section~8 of \cite{Clementino-Tholen03} Clementino and Tholen show that $U$ extends to a lax monad on $\enRel\V$ provided that $\V$ is completely distributive, by mapping a $\V$"/relation $\hmap JAB$ to
		\begin{displaymath}
			(UJ)(\mf x, \mf y) = \inf_{\substack{S \in \mf x \\ T \in \mf y}} \sup_{\substack{s \in S \\ t \in T}} J(s, t),
		\end{displaymath}
		for all $\mf x \in UA$ and $\mf y \in UB$; see \cite{Lai-Tholen17a} for an alternative proof. In case $\V = 2$, so that we can regard $J$ and $UJ$ as ordinary relations, the definition of $UJ$ reduces to
		\begin{displaymath}
			\mf x (UJ) \mf y \quad \iff \quad \Forall_{S \in \mf x} JS \in \mf y \quad \iff \quad \Forall_{T \in \mf y} \rev JT \in \mf x,
		\end{displaymath}
		which recovers Barr's original extension of the ultrafilter monad \cite{Barr70}. Returning to general $\V$, it was shown in Section~6.4 of \cite{Clementino-Hofmann04} that $UJ$ can be equivalently given by
		\begin{equation} \label{equivalent definition UJ}
			(UJ)(\mf x, \mf y) = \sup \set{v \in \V \mid \mf x (UJ_v) \mf y}
		\end{equation}
		where $\hmap{J_v}AB$ is the (ordinary) relation defined by
		\begin{displaymath}
			x J_v y \quad \defeq \quad v \leq J(x,y)
		\end{displaymath}
		for all $x \in A$ and $y \in B$.
		
		It is easily checked that the above described extension $U$ of the ultrafilter monad to $\V$"/relations is normal. Moreover $U$ is a strict monad in the cases $\V = \2$ (see Sections~III.1.11"/12 of \cite{Hofmann-Seal-Tholen14}) and $\V = (\brks{0, \infty}, \geq)$ (see Proposition~III.2.4.3 of \cite{Hofmann-Seal-Tholen14}). In Section~6.4 of \cite{Clementino-Hofmann04} it is shown that $U$ is not strict when $\V = (\brks{-\infty, \infty}, \geq)$; unfortunately I do not know whether $U$ is a strict monad for any of the quantales $\Delta_{\tn}$ of distance distribution functions (\exref{Delta}).
	\end{example}
	
	Having described the main examples of monads $T$ on thin equipments, in the definition below we recall, from e.g.\ Sections~III.1.6 and III.4.1 of \cite{Hofmann-Seal-Tholen14}, the notions of `graph' and `category' over such a monad. The examples that follow describe how these notions allow us to regard topological structures as algebraic structures.
  \begin{definition} \label{graph}
  	Let $T = (T, \mu, \iota)$ be a lax monad on a thin equipment $\K$.
  	\begin{enumerate}[label=-]
  		\item A \emph{$T$"/graph} $A = (A, \alpha)$ consists of an object $A$ equipped with a horizontal morphism $\hmap \alpha{TA}A$, such that the \emph{unitor cell} on the left below exists.
  		\begin{displaymath}
  			\begin{tikzpicture}
  				\matrix(m)[math35]{A & A \\ TA & A \\};
  				\path[map]	(m-1-1) edge node[left] {$\iota_A$} (m-2-1)
  										(m-2-1) edge[barred] node[below] {$\alpha$} (m-2-2);
  				\path				(m-1-1) edge[eq] (m-1-2)
  										(m-1-2) edge[eq] (m-2-2);
  				\draw				($(m-1-1)!0.5!(m-2-2)$) node[rotate=-90] {$\leq$};	
  			\end{tikzpicture} \qquad \qquad \begin{tikzpicture}
  				\matrix(m)[math35]{TA & TA & A \\ TA & & A \\};
  				\path[map]	(m-1-1) edge[barred] node[above] {$T1_A$} (m-1-2)
  										(m-1-2) edge[barred] node[above] {$\alpha$} (m-1-3)
  										(m-2-1) edge[barred] node[below] {$\alpha$} (m-2-3);
  				\path				(m-1-3) edge[eq] (m-2-3)
  										(m-1-1) edge[eq] (m-2-1);
  				\draw				($(m-1-2)!0.5!(m-2-2)$) node[rotate=-90] {$\leq$};
  			\end{tikzpicture} \qquad \qquad \begin{tikzpicture}
  				\matrix(m)[math35]{T^2A & A \\ TA & A \\};
  				\path[map]	(m-1-1) edge[barred] node[above, yshift=3pt, xshift=-3pt] {$(T\alpha)(\id, \iota_A)$} (m-1-2)
  														edge node[left] {$\mu_A$} (m-2-1)
  										(m-2-1) edge[barred] node[below] {$\alpha$} (m-2-2);
  				\path				(m-1-2) edge[eq] (m-2-2);
  				\draw				($(m-1-1)!0.5!(m-2-2)$) node[rotate=-90] {$\leq$};
  			\end{tikzpicture}
  		\end{displaymath}
  		\item A $T$"/graph $A$ is called \emph{left unitary} if the middle cell above exists and \emph{right unitary} if the cell on the right exists; it is called \emph{unitary} if both cells exist.
  		\item A \emph{$T$"/category} is a $T$"/graph $A = (A, \alpha)$ such that the \emph{associator cell} on the left below exists.
  		\begin{displaymath}
  			\begin{tikzpicture}
  				\matrix(m)[math35]{T^2A & TA & A \\ TA & & A \\};
  				\path[map]	(m-1-1) edge[barred] node[above] {$T\alpha$} (m-1-2)
  														edge node[left] {$\mu_A$} (m-2-1)
  										(m-1-2) edge[barred] node[above] {$\alpha$} (m-1-3)
  										(m-2-1) edge[barred] node[below] {$\alpha$} (m-2-3);
  				\path				(m-1-3) edge[eq] (m-2-3);
  				\draw				($(m-1-2)!0.5!(m-2-2)$) node[rotate=-90] {$\leq$};
  			\end{tikzpicture} \qquad \qquad \qquad \begin{tikzpicture}
					\matrix(m)[math35]{TA & A \\ TC & C \\};
					\path[map]	(m-1-1) edge[barred] node[above] {$\alpha$} (m-1-2)
															edge node[left] {$Tf$} (m-2-1)
											(m-1-2) edge node[right] {$f$} (m-2-2)
											(m-2-1) edge[barred] node[below] {$\gamma$} (m-2-2);
					\draw				($(m-1-1)!0.5!(m-2-2)$) node[rotate=-90] {$\leq$};
				\end{tikzpicture}
  		\end{displaymath}
  		\item A morphism $\map fAC$ between $T$"/graphs is called a \emph{$T$"/morphism} (or again simply \emph{morphism}) if the cell on the right above exists.
  	\end{enumerate}
  \end{definition}
  Notice that any $T$"/category $A$ is a unitary $T$"/graph: the required cells are obtained by composing the associator cell of $A$ both with the ``$T$"/image'' of the unitor cell and with the horizontal cell $\iota^*_A \leq \alpha$ that corresponds to the unitor cell, under \lemref{horizontal cells}. Moreover any $T$"/graph is left unitary when $T$ is normal: in that case we have $T1_A \hc \alpha = 1_{TA} \hc \alpha = \alpha$.
  	
  Together with the morphisms between them $T$"/graphs form a category which we denote by $\Gph T$. Left unitary $T$"/graphs, unitary $T$"/graphs and $T$"/categories generate full subcategories
  \begin{equation} \label{subcategories}
  	\enCat T \into \UGph T \into \LGph T \into \Gph T.
  \end{equation}
  For a lax extension of a $\Set$"/monad $T$ to $\enRel\V$, where $\V$ is a quantale, $T$"/graphs are traditionally called \emph{$(T, \V)$"/graphs} and their morphisms \emph{$(T, \V)$"/functors}; in this case we shall write $\Gph{(T, \V)} \dfn \Gph T$ and $\UGph{(T, \V)} \dfn \UGph T$. Likewise categories over such monads $T$ are called \emph{$(T, \V)$"/categories}, and we write $\enCat{(T, \V)} \dfn \enCat T$.
  \begin{example} \label{closure space}
  	Let $\V$ be a quantale and let $P$ be the powerset monad extended to $\V$"/relations, see \exref{powerset monad}. A $(P, \V)$"/graph is a set $A$ equipped with a $\V$"/relation $\map\delta{PA \times A}\V$ satisfying the reflexivity axiom
  	\begin{enumerate}
  		\item[(R)] $k \leq \delta(\set x, x)$
  	\end{enumerate}
  	for all $x \in A$. It is easily checked that $A$ is unitary precisely if it is left unitary, which in turn is equivalent to the extensionality axiom
  	\begin{enumerate}
  		\item[(E)] $S \subseteq T \quad \implies \quad \delta(S, x) \leq \delta(T, x)$
  	\end{enumerate}
  	for all $S, T \in PA$.
  	
  	Seal proved in Section~5.4 of \cite{Seal05} that a $\V$"/relation $\hmap\delta{PA}A$ equips the set $A$ with a $(P, \V)$"/category structure precisely if it satisfies the two axioms above as well as the transitivity axiom
		\begin{enumerate}
			\item[(T)] $v \tens \delta(S^{(v)}, x) \leq \delta(S, x)$,
		\end{enumerate}
		for all $x \in A$, $S \in PA$ and $v \in \V$; here $S^{(v)} \dfn \set{y \in A \mid v \leq \delta(S, y)}$. Called \emph{closeness spaces} by Seal, we follow \cite{Lai-Tholen17a} and call $(P, \V)$"/categories \emph{$\V$"/valued closure spaces}; we write $\Cls\V \dfn \enCat{(P, \V)}$. Similarly we shall call $(P, \V)$"/graphs \emph{$\V$"/valued pseudoclosure spaces} and unitary $(P, \V)$"/graphs \emph{$\V$"/valued preclosure spaces}, and write $\PsCls\V \dfn \Gph{(P, \V)}$ and $\PreCls\V \dfn \UGph{(P, \V)}$. In each of these categories a morphism $\map f{(A, \delta)}{(C, \zeta)}$ is a \emph{continuous map}, satisfying
		\begin{enumerate}
			\item[(C)] $\delta(S, x) \leq \zeta(fS, fx)$
		\end{enumerate}
		for all $S \in PA$ and $x \in A$.
		
		Of course $\2$"/valued closure spaces $A$ can be identified with ordinary closure spaces $(A, S \mapsto \bar S)$ via $x \in \bar S \iff \delta(S, x) = \top$, while morphisms $\map fAC$ are \emph{continuous} in the usual sense: $f\bar S \subseteq \overline{fS}$ for all $S \in PA$. In Exercise~III.2.G of \cite{Hofmann-Seal-Tholen14} $\brks{0, \infty}$"/valued closure spaces are called \emph{metric closure spaces}.
  \end{example}
  
  \begin{example} \label{pseudotopological space}
  	Let $\V$ be a completely distributive quantale and let $U$ be the ultrafilter monad extended to $\V$"/relations, see \exref{ultrafilter monad}. A $(U, \V)$"/graph is a set $A$ equipped with a \emph{$\V$"/valued convergence relation} $\map\alpha{UA \times A}\V$, which is required to satisfy the reflexivity axiom
  	\begin{enumerate}
  		\item[(R)] $k \leq \alpha(\iota x, x)$
  	\end{enumerate}
  	for all $x \in A$; here $\iota x$ is the principal ultrafilter generated by $x$. In the case $\V = \2$ this recovers the notion of \emph{pseudotopological space} introduced by Choquet \cite{Choquet48}; we shall call $(U, \V)$"/graphs \emph{$\V$"/valued pseudotopological spaces} and write $\PsTop\V \dfn \Gph{(U, \V)}$. A morphism $\map f{(A, \alpha)}{(C, \gamma)}$ of $\V$"/valued pseudotopological spaces is a \emph{continuous map} satisfying
  	\begin{enumerate}
  		\item[(C)] $\alpha(\mf x, x) \leq \gamma\bigpars{(Uf)(\mf x), fx}$
  	\end{enumerate}
  	for all $\mf x \in UA$ and $x \in A$.
  \end{example}
  
  While unitary $(U, \V)$"/graphs and $(U, \V)$"/categories can be described directly in terms of ultrafilter convergence, we shall follow the approach taken by Lai and Tholen in \cite{Lai-Tholen17a} and instead describe them in terms of $\V$"/valued preclosure spaces and $\V$"/valued closure spaces respectively. These descriptions generalise the classical description of topological spaces in terms of closure operations. We will use this approach throughout: for instance in \secref{horizontal T-morphisms} we will describe ``horizontal $U$"/morphisms'' $\hmap JAB$ between $(U, \V)$"/categories in terms of the corresponding $\V$"/valued closure space structures on $A$ and $B$.
  
  The functor $\enCat{(U, V)} \to \Cls\V$ that allows us to regard $(U, \V)$"/categories as $\V$"/valued closure spaces is induced by an `algebraic morphism' $\hmap\eps PU$ between the powerset monad $P$ and the ultrafilter monad $U$, in the sense of Section~7 of \cite{Tholen16}, as follows. The first assertion of the proposition below is the first assertion of Proposition~3.4 of \cite{Lai-Tholen17a}.
  \begin{proposition} \label{algebraic morphism}
  	Let $\V$ be a completely distributive quantale and let $P$ and $U$ be the extensions of the powerset and ultrafilter monads to the thin equipment $\enRel\V$ of $\V$"/relations. The family of $\V$"/relations $\hmap{\eps_A}{PA}{UA}$, where $A$ ranges over all sets, that is defined by
  	\begin{displaymath}
  		\eps_A(S, \mf x) = \begin{cases}
  			k & \text{if $S \in \mf x$;} \\
  			\bot & \text{otherwise,}
  		\end{cases}
  	\end{displaymath}
  	for all $S \in PA$ and $\mf x \in UA$, forms an \emph{algebraic morphism} $\hmap\eps PU$. This means that the cells
  	\begin{displaymath}
  		\begin{tikzpicture}
  			\matrix(m)[math35]{PA & UA \\ PC & UC \\};
  			\path[map]	(m-1-1) edge[barred] node[above] {$\eps_A$} (m-1-2)
  													edge node[left] {$Pf$} (m-2-1)
  									(m-1-2) edge node[right] {$Uf$} (m-2-2)
  									(m-2-1) edge[barred] node[below] {$\eps_C$} (m-2-2);
  			\draw				($(m-1-1)!0.5!(m-2-2)$) node[rotate=-90] {$\leq$};
			\end{tikzpicture} \qquad \qquad \begin{tikzpicture}
  			\matrix(m)[math35]{A & A \\ PA & UA \\};
  			\path[map]	(m-1-1) edge node[left] {$\iota_A^P$} (m-2-1)
  									(m-1-2) edge node[right] {$\iota_A^U$} (m-2-2)
  									(m-2-1) edge[barred] node[below] {$\eps_A$} (m-2-2);
  			\path				(m-1-1) edge[eq] (m-1-2);
  			\draw				($(m-1-1)!0.5!(m-2-2)$) node[rotate=-90] {$\leq$};
			\end{tikzpicture} \qquad \qquad \begin{tikzpicture}
  				\matrix(m)[math35]{P^2A & PUA & U^2A \\ PA & & UA \\};
  				\path[map]	(m-1-1) edge[barred] node[above] {$P\eps_A$} (m-1-2)
  														edge node[left] {$\mu_A^P$} (m-2-1)
  										(m-1-2) edge[barred] node[above] {$\eps_{UA}$} (m-1-3)
  										(m-1-3) edge node[right] {$\mu_A^U$} (m-2-3)
  										(m-2-1) edge[barred] node[below] {$\eps_A$} (m-2-3);
  				\draw				($(m-1-2)!0.5!(m-2-2)$) node[rotate=-90] {$\leq$};
  			\end{tikzpicture}
  	\end{displaymath}
  	exist in $\enRel\V$, where $A$ is any set and $\map fAC$ is any function, while
  	\begin{displaymath}
  		PJ \hc \eps_B \leq \eps_A \hc UJ \qquad \text{and} \qquad P\eps_A \hc P\alpha = P(\eps_A \hc \alpha)
  	\end{displaymath}
  	for all $\V$"/relations $\hmap JAB$ and $\hmap\alpha{UA}A$.
  	
  	Furthermore the following identities hold:
		\begin{enumerate}[label=\textup{(\alph*)}]
			\item $\eps_A(\iota^P_A, \id) = \iota^U_{A*}$ for all $A$;
			\item $PJ \hc \eps_B = \eps_A \hc UJ$ for all $\hmap JAB$;
			\item $\eps_A \lhom (\eps_A \hc H) = H$ for all $\hmap H{UA}B$ with $(UH)(\id, \iota^U_B) \leq H(\mu_A^U, \id)$.
		\end{enumerate}
  \end{proposition}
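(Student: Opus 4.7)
The plan is to verify parts (a), (b), (c) by pointwise computation, exploiting two features: the $\V$"/relation $\eps_A$ takes only the values $k$ and $\bot$, so that suprema against $\eps_A$ collapse to suprema over characteristic index sets; and the complete distributivity of $\V$ lets me reduce an inequality $w \leq w'$ to checking $v \leq w'$ for every $v$ totally below $w$, using that $w = \sup\set{u : u \ll w}$.

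Part (a) is immediate: the $\V$"/relation $\eps_A(\iota^P_A, \id)$ sends $(x, \mf x) \in A \times UA$ to $\eps_A(\set x, \mf x)$, which equals $k$ precisely when $\set x \in \mf x$, i.e.\ when $\mf x = \iota^U_A(x)$, matching the defining formula of $\iota^U_{A*}$. For (b), a short totally"/below argument gives $(PJ)(S, T) = \sup\set{v : T \subseteq J_vS}$, while \eqref{equivalent definition UJ} supplies $(UJ)(\mf x, \mf y) = \sup\set{v : \mf x(UJ_v)\mf y}$. Substituting these into the respective composites yields
\begin{displaymath}
	(PJ \hc \eps_B)(S, \mf y) = \sup\set{v : J_vS \in \mf y} \qquad\text{and}\qquad (\eps_A \hc UJ)(S, \mf y) = \sup\set{v : \exists\, \mf x \in S^\sharp, \mf x(UJ_v)\mf y}.
\end{displaymath}
The two agree because the filter base $\set S \cup \set{J_v^{-1}T : T \in \mf y}$ on $A$ has the finite intersection property exactly when $J_vS$ meets every $T \in \mf y$, and for the ultrafilter $\mf y$, meeting every element is equivalent to membership.

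For (c), the direction $H \leq \eps_A \lhom (\eps_A \hc H)$ is immediate from the adjunction defining $\lhom$. For the reverse, unfolding and using that $\eps_A$ is two"/valued yields
\begin{displaymath}
	\bigpars{\eps_A \lhom (\eps_A \hc H)}(\mf x, y) = \inf_{S \in \mf x} \sup_{\mf x' \in S^\sharp} H(\mf x', y),
\end{displaymath}
which I abbreviate $\mathcal H(\mf x, y)$; the claim reduces to $\mathcal H(\mf x, y) \leq H(\mf x, y)$. Fix $v \ll \mathcal H(\mf x, y)$; since $\mathcal H(\mf x, y) \leq \sup_{\mf x' \in S^\sharp} H(\mf x', y)$ for each $S \in \mf x$, one has $v \ll \sup_{\mf x' \in S^\sharp} H(\mf x', y)$ and hence, by the totally"/below property, an ultrafilter $\mf x'_S \in S^\sharp$ with $v \leq H(\mf x'_S, y)$. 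Setting $\mathcal B = \set{\mf x' \in UA : v \leq H(\mf x', y)}$, these witnesses show that $\set{\mathcal B} \cup \set{S^\sharp : S \in \mf x}$ has the finite intersection property, so extends to an ultrafilter $\mf X$ on $UA$. Containment $S^\sharp \in \mf X$ for each $S \in \mf x$ gives $\mf x \subseteq \mu^U_A\mf X$, hence $\mu^U_A\mf X = \mf x$; and since $\mathcal B \in \mf X$ the ultrafilter property yields $\mathcal B \cap \mathcal S \neq \emptyset$ for every $\mathcal S \in \mf X$, providing some $\mf x' \in \mathcal S$ with $H(\mf x', y) \geq v$. Combined with the identity $(UH)(\mf X, \iota^U_B y) = \inf_{\mathcal S \in \mf X} \sup_{\mf x' \in \mathcal S} H(\mf x', y)$ (the infimum over $T \in \iota^U_B y$ being attained at $T = \set y$), this gives $(UH)(\mf X, \iota^U_B y) \geq v$. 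The hypothesis on $H$ then yields $v \leq H(\mu^U_A\mf X, y) = H(\mf x, y)$, and taking the supremum over $v \ll \mathcal H(\mf x, y)$ closes the argument.

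The main obstacle is the ultrafilter construction in (c): simply extending the base $\set{S^\sharp : S \in \mf x}$ to an ultrafilter does not suffice, because a generic extension may contain a set $\mathcal S$ on which $H(\cdot, y) < v$ everywhere, destroying the lower bound for $(UH)(\mf X, \iota^U_B y)$. Inserting $\mathcal B$ into the base, and then relying on the ultrafilter property of $\mf X$ to guarantee $\mathcal B \cap \mathcal S \neq \emptyset$ for every $\mathcal S \in \mf X$, is exactly what lets the $v$"/witnesses survive the final infimum.
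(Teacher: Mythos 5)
Your proposal is correct, and on parts (a) and (c) it coincides with the paper's own argument: part (a) is the same observation that $\set x \in \mf x$ forces $\mf x = \iota^U_A(x)$, and your part (c) --- including the crucial move of seeding the filter base with the witness set $\mathcal B$, so that $\mathcal B \in \mf X$ protects the lower bound $v$ through the infimum in $(UH)\bigpars{\mf X, \iota^U_B(y)}$ --- is exactly the paper's proof, which takes as its filter base the sets $X_S = \set{\mf y \in S^\sharp \mid v \leq H(\mf y, y)}$, i.e.\ your $\mathcal B \isect S^\sharp$. Part (b) is where you genuinely diverge. The paper obtains $PJ \hc \eps_B \leq \eps_A \hc UJ$ for free from the algebraic-morphism property (cited from Proposition~3.4 of \cite{Lai-Tholen17a}) and proves the converse by a direct computation whose pivot is the minimax identity $\inf_{T \in \mf y}\sup_{t \in T} = \sup_{T \in \mf y}\inf_{t \in T}$ for ultrafilters over a completely distributive lattice (\lemref{minimax}). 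You instead rewrite both composites as suprema over level sets, $(PJ \hc \eps_B)(S, \mf y) = \sup\set{v \mid J_vS \in \mf y}$ and, via \eqref{equivalent definition UJ}, $(\eps_A \hc UJ)(S, \mf y) = \sup\bigset{v \mid \exists\, \mf x \in S^\sharp\colon \mf x(UJ_v)\mf y}$, and you show the two index sets coincide for each $v$ by realising an ultrafilter refining $\set S \union \set{\rev{(J_v)}T \mid T \in \mf y}$; the finite-intersection check is sound since $\rev{(J_v)}(T_1 \isect \dotsb \isect T_n) \subseteq \rev{(J_v)}T_1 \isect \dotsb \isect \rev{(J_v)}T_n$ with $T_1 \isect \dotsb \isect T_n \in \mf y$, and ``meets every member of $\mf y$'' is indeed equivalent to membership in the ultrafilter $\mf y$. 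Complete distributivity enters for you through the totally-below derivation of $(PJ)(S,T) = \sup\set{v \mid T \subseteq J_vS}$ and through \eqref{equivalent definition UJ} rather than through the minimax lemma; your route buys both inequalities of (b) at once, without leaning on the algebraic-morphism clause, and it reuses in (b) the same filter-realisation technique that the paper reserves for (c). One bookkeeping caveat: the statement also asserts the full algebraic-morphism package (the three cells and $P\eps_A \hc P\alpha = P(\eps_A \hc \alpha)$), which your computations only partially deliver (your (b) subsumes $PJ \hc \eps_B \leq \eps_A \hc UJ$); as the paper itself does, the remaining clauses should be discharged by citing Proposition~3.4 of \cite{Lai-Tholen17a}.
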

  Since the proof is somewhat long and technical we defer it to the end of this section. The first assertion of the following proposition is the second assertion of Proposition~3.4 of \cite{Lai-Tholen17a}.
  \begin{proposition} \label{algebraic functor}
  	The assignment $(A, \hmap\alpha{UA}A) \mapsto (A, \eps_A \hc \alpha)$, where
  	\begin{displaymath}
  		\hmap{\eps_A \hc \alpha}{PA}A\colon(S, x) \mapsto \sup_{\mf x \in S^\sharp} \alpha(\mf x, x),
  	\end{displaymath}
  	extends to \emph{algebraic functors} $\map{\tilde \eps}{\enCat{(U, \V)}}{\Cls\V}$ and $\map{\tilde \eps}{\Gph{(U, \V)}}{\PreCls\V}$ that leave morphisms unchanged.
  \end{proposition}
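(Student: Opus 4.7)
The plan is to verify well"/definedness on both objects and morphisms, working throughout with horizontal inequalities (via \lemref{horizontal cells}) and invoking the three structural cells of the algebraic morphism $\eps$ in turn. Identities are trivially preserved, and since $\tilde\eps$ leaves morphisms unchanged, composition is preserved automatically once the map is shown to send objects to the right kind and $(U, \V)$"/continuity to $(P, \V)$"/continuity.

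On objects, for a $(U, \V)$"/graph $(A, \alpha)$, reflexivity of $(A, \eps_A \hc \alpha)$ follows by composing the horizontal form $\iota^{P*}_A \leq \eps_A \hc \iota^{U*}_A$ of the unit cell of $\eps$ with the reflexivity $\iota^{U*}_A \leq \alpha$ of $(A, \alpha)$. Left unitarity, which coincides with unitarity for $(P, \V)$"/graphs by \exref{closure space}, follows from the instance $P1_A \hc \eps_A \leq \eps_A \hc U1_A = \eps_A$ of the algebraic morphism condition $PJ \hc \eps_B \leq \eps_A \hc UJ$ at $J = 1_A$, using normality of $U$; post"/composing with $\alpha$ then yields $P1_A \hc \eps_A \hc \alpha \leq \eps_A \hc \alpha$. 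When $(A, \alpha)$ is moreover a $(U, \V)$"/category, transitivity of $(A, \eps_A \hc \alpha)$ is derived by chaining
\begin{align*}
  P(\eps_A \hc \alpha) \hc \eps_A \hc \alpha &= P\eps_A \hc P\alpha \hc \eps_A \hc \alpha \\
  &\leq P\eps_A \hc \eps_{UA} \hc U\alpha \hc \alpha \\
  &\leq P\eps_A \hc \eps_{UA} \hc \mu^U_{A*} \hc \alpha \\
  &\leq \mu^P_{A*} \hc \eps_A \hc \alpha,
\end{align*}
successively invoking the strict preservation $P\eps_A \hc P\alpha = P(\eps_A \hc \alpha)$, the compatibility $P\alpha \hc \eps_A \leq \eps_{UA} \hc U\alpha$, the associator of $(A, \alpha)$, and the horizontal form $P\eps_A \hc \eps_{UA} \hc \mu^U_{A*} \leq \mu^P_{A*} \hc \eps_A$ of the multiplication cell of $\eps$.

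On morphisms, a $(U, \V)$"/functor $f \colon (A, \alpha) \to (C, \gamma)$ satisfies $\alpha \hc f_* \leq Uf_* \hc \gamma$; post"/composing on the left with $\eps_A$ and applying the horizontal form $\eps_A \hc Uf_* \leq Pf_* \hc \eps_C$ of the functor cell of $\eps$ yields $\eps_A \hc \alpha \hc f_* \leq Pf_* \hc \eps_C \hc \gamma$, which is precisely the continuity condition for $f$ regarded as a morphism $(A, \eps_A \hc \alpha) \to (C, \eps_C \hc \gamma)$. The main subtlety lies in the transitivity step, where all three components of the algebraic morphism structure on $\eps$ must interlock with the strict lax"/preservation identity $P\eps_A \hc P\alpha = P(\eps_A \hc \alpha)$; every other axiom transfers by a single invocation of one of the algebraic morphism axioms.
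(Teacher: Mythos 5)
Your proof is correct and follows essentially the same route as the paper's, whose proof is explicitly just a sketch (``a routine calculation''): its one stated detail---left unitarity via normality of $U$, giving $P1_A \hc \eps_A = \eps_A$ by \propref{algebraic morphism}---you reproduce, and you fill in the remaining verifications exactly as intended, converting the structure cells of the algebraic morphism $\eps$ into horizontal inequalities via \lemref{horizontal cells} and chaining them with reflexivity, the associator of $(A, \alpha)$ and the multiplication cell. Your observation that the transitivity step genuinely requires the strict identity $P\eps_A \hc P\alpha = P(\eps_A \hc \alpha)$, since lax functoriality alone yields only $P\eps_A \hc P\alpha \leq P(\eps_A \hc \alpha)$, which points the wrong way, is accurate and explains why the paper states that identity separately in \propref{algebraic morphism}.
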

  \begin{proof}[Sketch of the proof]
  	A routine calculation. To show that a $(U, \V)$"/graph $(A, \alpha)$ is mapped to a $\V$"/valued preclosure space, that is $\eps_A \hc \alpha$ forms a left unitary $(P, \V)$"/graph structure on $A$ (see \exref{closure space}), remember that the extension $U$ of the ultrafilter monad is normal, so that $P1_A \hc \eps_A = \eps_A$ by the previous proposition.
  \end{proof}
  
  We now follow \cite{Lai-Tholen17b} in calling a $\V$"/valued closure space $(A, \delta)$ (\exref{closure space}) a \emph{$\V$"/valued topological space} whenever its structure relation $\map\delta{PA \times A}\V$ preserves finite joins:
		\begin{equation} \label{preserve finite joins}
			\delta(\emptyset, x) = \bot \qquad \text{and} \qquad \delta(S \union T, x) = \sup\bigbrcs{\delta(S, x), \delta(T, x)}
		\end{equation}
		for all $x \in A$ and $S, T \in PA$. In particular $\2$"/valued topological spaces can be identified with topological spaces, while $\brks{0, \infty}$"/valued topological spaces coincide with Lowen's original \emph{approach spaces} \cite{Lowen97}, consisting of sets $A$ equipped with a \emph{point"/set distance} $\map\delta{PA \times A}{\brks{0, \infty}}$. Taking $\V = \Delta_{\tn}$, the quantale of distance distribution functions (\exref{Delta}), $\Delta_{\tn}$"/valued topological spaces are called \emph{$\tn$"/probabilistic approach spaces} in \cite{Lai-Tholen17b}.
		
		Writing $\vTop\V$ for the full subcategory of $\Cls\V$ generated by $\V$"/valued topological spaces, the main result of \cite{Lai-Tholen17a} is as follows.
  \begin{theorem}[Lai and Tholen] \label{V-valued topological space}
  	Let $\V$ be a completely distributive quantale. The algebraic functor $\map{\tilde \eps}{\enCat{(U, \V)}}{\Cls\V}$ embeds $\enCat{(U, \V)}$ into $\Cls\V$ as a full coreflective subcategory, which is precisely the category $\vTop\V$ of $\V$"/valued topological spaces. Its right adjoint $\map R{\Cls\V}{\enCat{(U, \V)}}$ is given on objects by $R(A, \delta) = (A, \eps_A \lhom \delta)$, where
  	\begin{displaymath}
  		\hmap{\eps_A \lhom \delta}{UA}A\colon (\mf x, x) \mapsto \inf_{S \in \mf x} \delta(S, x),
  	\end{displaymath}
  	while it leaves morphisms unchanged.
  \end{theorem}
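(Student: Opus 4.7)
The plan is to construct $R$ as a right adjoint to $\tilde\eps$ using the identities for the algebraic morphism $\hmap\eps PU$ recorded in \propref{algebraic morphism}, to derive full"/faithfulness of $\tilde\eps$ from identity~(c) of that proposition, and to identify the essential image with $\vTop\V$ via a filter"/extension argument that exploits the complete distributivity of $\V$. Throughout, the underlying Galois pair $\eps_A \hc (\dash) \ladj \eps_A \lhom (\dash)$ on $\V$"/relations out of $UA$ and $PA$ does most of the abstract work.

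First I would verify that $R$ is well"/defined as a functor $\Cls\V \to \enCat{(U, \V)}$. For a $\V$"/valued closure space $(A, \delta)$, set $\beta \dfn \eps_A \lhom \delta$, so that $\beta(\mf x, x) = \inf_{S \in \mf x} \delta(S, x)$. Reflexivity of $\beta$ is immediate from axioms (R) and (E) of $\delta$: for $S \ni x$ one has $k \leq \delta(\set x, x) \leq \delta(S, x)$, so $k \leq \beta(\iota_A^U x, x)$. Transitivity of $\beta$ is the first technical heart of the proof: using the cuts formula \eqref{equivalent definition UJ}, for any $\mf X \in U^2 A$, $\mf x \in UA$ and scalar $w$ with $\mf X\,(U\beta_w)\,\mf x$, the inclusion $\beta_w(T^\sharp) \subseteq T^{(w)}$ (in the notation of \exref{closure space}) forces $T^{(w)} \in \mf x$ whenever $T^\sharp \in \mf X$; combined with $\beta(\mf x, x) \leq \delta(T^{(w)}, x)$ and axiom (T) of $\delta$ this delivers $w \tens \beta(\mf x, x) \leq \delta(T, x)$, after which suprema over admissible $w$, infima over such $T$, and suprema over $\mf x$ assemble into $(U\beta \hc \beta)(\mf X, x) \leq \beta(\mu_A^U \mf X, x)$. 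Functoriality of $R$ on morphisms follows routinely from (C) and (E) applied via $f \inv f T \subseteq T$.

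Next I would establish the adjunction $\tilde\eps \ladj R$ by a direct unfolding. A map $f$ is a closure"/space morphism $\tilde\eps(A, \alpha) \to (C, \delta)$ precisely when $(\eps_A \hc \alpha)(S, x) \leq \delta(fS, fx)$ for all $S, x$, while $f$ is a $U$"/morphism $(A, \alpha) \to R(C, \delta)$ precisely when $\alpha(\mf x, x) \leq \inf\set{\delta(T, fx) \mid \inv f T \in \mf x}$ for all $\mf x, x$. The forward implication specialises the closure"/space bound to $S = \inv f T$ and uses (E) via $f \inv f T \subseteq T$; the reverse uses $S \subseteq \inv f fS$ to bound the infimum by $\delta(fS, fx)$ for every $\mf x \in S^\sharp$. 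With the adjunction in place, full"/faithfulness of $\tilde\eps$ reduces to showing $\alpha = \eps_A \lhom (\eps_A \hc \alpha)$ for every $(U, \V)$"/category $(A, \alpha)$; this is precisely \propref{algebraic morphism}(c) applied to $H = \alpha$, whose hypothesis $(U\alpha)(\id, \iota^U_A) \leq \alpha(\mu^U_A, \id)$ is the right unitary axiom, satisfied by every $(U, \V)$"/category.

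Finally I would characterise the essential image. That $\eps_A \hc \alpha$ preserves finite joins, so $\tilde\eps$ lands in $\vTop\V$, is an immediate consequence of the prime property of ultrafilters. Conversely, for $(A, \delta) \in \vTop\V$ I must establish $\eps_A \hc (\eps_A \lhom \delta) = \delta$: the $\leq$ half is the counit of the Galois pair, and the $\geq$ half is the second technical heart. By complete distributivity of $\V$ it suffices, for each $v \ll \delta(S, x)$, to exhibit an ultrafilter $\mf x \in S^\sharp$ with $v \leq \delta(T, x)$ for all $T \in \mf x$. I would apply the ultrafilter lemma to $\mathcal G \dfn \set{S} \cup \set{U \subseteq A \mid v \not\leq \delta(A \setminus U, x)}$ after verifying its finite intersection property: if $S \isect U_1 \isect \dotsm \isect U_n = \emptyset$ then $S \subseteq \Union_i (A \setminus U_i)$, so (E) and the topological axiom \eqref{preserve finite joins} give $\delta(S, x) \leq \sup_i \delta(A \setminus U_i, x)$, and $v \ll \delta(S, x)$ then forces $v \leq \delta(A \setminus U_i, x)$ for some $i$, contradicting $U_i \in \mathcal G$. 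Any ultrafilter extending $\mathcal G$ fulfils the requirements. The main obstacles are these two technical hearts: the associator for $R(A, \delta)$ and the $\geq$ half of the counit calculation, both of which hinge on complete distributivity of $\V$ (via the cuts formula in the first case and the totally"/below relation in the second) together with the relevant structural axiom of $\delta$ (transitivity for the associator, finite"/join preservation for the counit).
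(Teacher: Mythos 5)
Your proof is correct, but note that the paper itself gives no proof of this statement: it is imported from Lai and Tholen (Theorem~3.6 of \cite{Lai-Tholen17a}), and even the paper's proof of the pretopological analogue (\thmref{V-valued pretopological space}) defers the adjunction argument and the identity $(\tilde\eps \of R)(A) = A$ to that source, supplying in-house only the well-definedness of $R$ and the unit identity $(R \of \tilde\eps)(A) = A$ via \propref{algebraic morphism}(c). Your reconstruction follows the same blueprint as that in-paper proof --- the Galois pair $\eps_A \hc (\dash) \ladj \eps_A \lhom (\dash)$, \propref{algebraic morphism}(c) applied to $H = \alpha$ with right unitarity as its hypothesis, primeness of ultrafilters for landing in $\vTop\V$ --- and correctly fills in exactly the two pieces the paper outsources. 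First, the associator for $R(A, \delta)$: your use of the cut formula \eqref{equivalent definition UJ} together with the inclusion $\beta_w(T^\sharp) \subseteq T^{(w)}$ and Seal's transitivity axiom (T) is sound, with $\tens$ preserving suprema allowing the admissible scalars $w$ to be assembled into $(U\beta \hc \beta)(\mf X, x) \leq \beta(\mu_A^U \mf X, x)$; this replaces the right-unitarity computation that suffices in the pretopological case. Second, the counit $\eps_A \hc (\eps_A \lhom \delta) = \delta$ on $\vTop\V$: the family $\mathcal G = \set S \union \set{U \subseteq A \mid v \not\leq \delta(A \setminus U, x)}$ indeed has the finite intersection property by extensionality, finite-join preservation and the definition of $v \ll \delta(S, x)$, and any ultrafilter $\mf x \supseteq \mathcal G$ works because $T \in \mf x$ with $v \not\leq \delta(T, x)$ would put both $T$ and its complement in $\mf x$; this matches the argument the paper alludes to in \exref{Lawvere quantale as a modular approach space} when it reuses ``the second paragraph of the proof of Theorem~3.6''. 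Two points worth making explicit: the degenerate case $S = \emptyset$ is harmless since $\delta(\emptyset, x) = \bot$ and no $v$ is totally below $\bot$, so the quantification over $v \ll \delta(S, x)$ is vacuous there; and fullness of $\tilde\eps$ requires the unit to be an identity, which is precisely what your invocation of \propref{algebraic morphism}(c) delivers.
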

  
  \begin{example} \label{ultrafilter convergence}
		Taking $\V = \2$ in the theorem above recovers Barr's presentation \cite{Barr70}
		\begin{displaymath}
			\enCat{(U, \2)} \iso \Top
		\end{displaymath}
		of topological spaces in terms of ultrafilter convergence. Instead of closure operations, in terms of topologies this isomorphism is induced by the correspondence of $(U, 2)$"/category structures $\hmap\alpha{UA}A$ and topologies $\tau$ on a set $A$, given by the inverse assignments $\alpha \mapsto \tau$ and $\tau \mapsto \alpha$ that are defined by
		\begin{displaymath}
			S \in \tau \quad \defeq \quad \Forall_{\mf x \alpha x} \pars{x \in S \implies S \in \mf x}
				\qquad \text{and} \qquad	\mf x \alpha x \quad \defeq \quad \Forall_{S \in \tau} \pars{x \in S \implies S \in \mf x},
		\end{displaymath}
		where $S \subseteq A$, $\mf x \in UA$ and $x \in A$.
		
		Taking $\V = \brks{0, \infty}$ in the theorem above recovers Clementino and Hofmann's presentation (Section~3.2 of \cite{Clementino-Hofmann03})
		\begin{displaymath}
			\enCat{(U, \brks{0, \infty})} \iso \App
		\end{displaymath}
		of Lowen's approach spaces in terms of metric ultrafilter convergence.
	\end{example}
	
	Turning to $(U, \V)$"/graphs, we follow \cite{Lai-Tholen17b} in calling a $\V$"/valued pseudoclosure space $A = (A, \delta)$ (\exref{closure space}) a \emph{$\V$"/valued pretopological space} whenever its structure relation $\map\delta{PA \times A}\V$ preserves finite joins; see \eqref{preserve finite joins}. Notice that this implies that $A$ is unitary, i.e.\ $A$ is a $\V$"/valued preclosure space (see \exref{closure space}). Choosing $\V = \2$ recovers the classical notion of \emph{pretopological space} \cite{Choquet48}; see Example~III.4.1.3(2) of \cite{Hofmann-Seal-Tholen14}.
	
	Writing $\PreTop\V$ for the subcategory of $\PreCls\V$ consisting of $\V$"/valued pretopological spaces, the following theorem is a variation on \thmref{V-valued topological space}.
	\begin{theorem} \label{V-valued pretopological space}
		For a completely distributive quantale $\V$ consider the restriction
		\begin{displaymath}
			\UGph{(U, \V)} \into \Gph{(U, \V)} \xrar{\tilde \eps}\PreCls\V,
		\end{displaymath}
		of the functor $\tilde \eps$ that is given in \propref{algebraic functor}. Again denoted by $\tilde \eps$, it embeds $\UGph{(U, \V)}$ into $\PreCls\V$ as a full coreflective subcategory, which is precisely the category $\PreTop\V$ of $\V$"/valued pretopological spaces. As in \thmref{V-valued topological space} the right adjoint $R$ to $\tilde\eps$ is given by $R(A, \delta) = (A, \eps_A \lhom \delta)$.
	\end{theorem}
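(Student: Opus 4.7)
The proof parallels that of \thmref{V-valued topological space} (Lai--Tholen), with the transitivity axiom on both sides replaced by the weaker right"/unitarity (of unitary $(U,\V)$"/graphs) and finite"/join preservation (of $\V$"/valued pretopological spaces). The plan is to exhibit $\tilde\eps$ as the left adjoint of $R$, to show that both functors restrict to the relevant subcategories, and then to verify that the unit $\id \to R \tilde\eps$ is an isomorphism (so $\tilde\eps$ is fully faithful) while the counit $\tilde\eps R \to \id$ is an isomorphism precisely on $\PreTop\V$.

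That $\tilde\eps$ lands in $\PreTop\V$ is immediate from \propref{algebraic functor}: finite"/join preservation of $(\eps_A \hc \alpha)(S, x) = \sup_{\mf x \in S^\sharp}\alpha(\mf x, x)$ follows from $\emptyset^\sharp = \emptyset$ and the primeness of ultrafilters $(S \union T)^\sharp = S^\sharp \union T^\sharp$. That $R(A, \delta) = (A, \eps_A \lhom \delta)$ lands in $\UGph{(U,\V)}$ requires checking unitarity of $\alpha = \eps_A \lhom \delta$: reflexivity follows from extensionality and reflexivity of $\delta$, left"/unitarity is automatic since $U$ is normal, and right"/unitarity is verified directly by exploiting that $T \in \iota^U_A(x) \iff x \in T$ to collapse the relevant infimum to $(U\alpha)(\mf X, \iota^U_A(x)) = \inf_{\mathcal S \in \mf X}\sup_{\mf s \in \mathcal S}\alpha(\mf s, x)$, which is then bounded by $\inf_{S \colon S^\sharp \in \mf X} \delta(S, x) = \alpha(\mu^U_A(\mf X), x)$ via the choice $\mathcal S = S^\sharp$ and the fact that $\mf s \in S^\sharp$ entails $\alpha(\mf s, x) \leq \delta(S, x)$.

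The adjunction $\tilde\eps \dashv R$ is the lift of the horizontal adjunction $\eps_B \hc \dash \dashv \eps_B \lhom \dash$ to the relevant categories; the bijection on morphisms is verified elementwise using extensionality of $\zeta$ to interchange $\inf_{S \in \mf x} \zeta(fS, fx) = \inf_{T \in (Uf)(\mf x)} \zeta(T, fx)$, since any $T \in (Uf)(\mf x)$ satisfies $T \supseteq f(f^{-1}T)$ with $f^{-1}T \in \mf x$. The unit inequality $\alpha \leq \eps_A \lhom (\eps_A \hc \alpha)$ is an equality when $\alpha$ is right"/unitary, directly by \propref{algebraic morphism}(c); hence $R \tilde\eps = \id$ on $\UGph{(U,\V)}$ and $\tilde\eps$ is fully faithful.

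The main obstacle is the counit equality $\eps_B \hc (\eps_B \lhom \delta) = \delta$ whenever $\delta$ preserves finite joins (the inequality $\leq$ being formal). Using complete distributivity of $\V$, it suffices to produce, for every $v \ll \delta(S, x)$, an ultrafilter $\mf x \in S^\sharp$ with $v \leq \delta(T, x)$ for all $T \in \mf x$. I would construct $\mf x$ via the prime ideal theorem applied to the ideal $\mathcal I$ in $PA$ generated by the downward"/closed set $\mathcal J = \set{T \subseteq A \mid v \not\leq \delta(T, x)}$. The key point is that $S \notin \mathcal I$: if $S = T_1 \union \cdots \union T_n$ with each $T_i \in \mathcal J$, then finite"/join preservation gives $\delta(S, x) = \sup\set{\delta(T_i, x) \mid 1 \leq i \leq n}$, and $v \ll \delta(S, x)$ then forces $v \leq \delta(T_i, x)$ for some $i$ by the very definition of $\ll$, contradicting $T_i \in \mathcal J$. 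Extending $\mathcal I$ to a maximal ideal avoiding $S$ yields a prime ideal whose complementary ultrafilter has the required properties, and taking the supremum over $v \ll \delta(S, x)$ completes the proof.
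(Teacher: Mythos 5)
Your proposal is correct, and its skeleton coincides with the paper's proof: the same ultrafilter facts ($\emptyset^\sharp = \emptyset$ and $(S \union T)^\sharp = S^\sharp \union T^\sharp$) show that $\tilde\eps$ lands in $\PreTop\V$; your verification that $R(A,\delta)$ is a unitary $(U,\V)$"/graph (reflexivity from (R) and (E), left unitarity from normality of $U$, right unitarity by collapsing $(U\alpha)\bigpars{\mf X, \iota^U_A(x)}$ and choosing $\sigma = T^\sharp$ for $T^\sharp \in \mf X$) is exactly the computation in the paper; the morphism check via $\inf_{S \in \mf x}\zeta(fS, fx) = \inf_{T \in (Uf)(\mf x)}\zeta(T, fx)$ matches; and the unit equality $R \of \tilde\eps = \id$ on $\UGph{(U, \V)}$ is obtained, as in the paper, by applying \propref{algebraic morphism}(c) to $H = \alpha$, whose hypothesis is precisely right unitarity. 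The one genuine divergence is the counit: the paper does not prove $\eps_A \hc (\eps_A \lhom \delta) = \delta$ for finite"/join"/preserving $\delta$ at all, instead citing the second paragraph of the proof of Theorem~3.6 of Lai--Tholen as applying verbatim, whereas you supply a self"/contained argument via the Boolean prime ideal theorem: given $v \ll \delta(S,x)$, the downset $\set{T \mid v \not\leq \delta(T,x)}$ (downward closed by (E)) generates an ideal not containing $S$, because $S = T_1 \union \dotsb \union T_n$ would give $\delta(S,x) = \sup_i \delta(T_i, x)$ and then $v \leq \delta(T_i,x)$ for some $i$ by the definition of the totally below relation; the complement of a prime ideal separating this ideal from $S$ is the required ultrafilter in $S^\sharp$, and complete distributivity ($w = \sup\set{v \mid v \ll w}$) finishes. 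This is sound (the degenerate case $S = \emptyset$ is vacuous, since $v \ll \bot$ is impossible and both sides are $\bot$ there by $\emptyset^\sharp = \emptyset$), and it makes transparent exactly where finite"/join preservation and complete distributivity enter---the same mechanism the paper's citation relies on---so your write"/up buys self"/containedness at the cost of invoking BPI explicitly, while the paper buys brevity by outsourcing this step.
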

	\begin{proof}
		That the composite $\tilde \eps$ maps into $\PreTop\V$ follows directly from the fact that, for an ultrafilter $\mf x$ on a set $A$, we have $\emptyset \notin \mf x$ while $S \union T \in \mf x$ precisely if $S \in \mf x$ or $T \in \mf x$. We start by checking that the assignment $RA = (A, \eps_A \lhom \delta)$, for $\V$"/valued preclosure spaces $A = (A, \delta)$, induces a functor $\map R{\PreCls\V}{\UGph{(U, \V)}}$.
		
		Writing
		\begin{displaymath}
			\hmap{\alpha \dfn \eps_A \lhom \delta}{UA}A\colon (\mf x, x) \mapsto \inf_{S \in \mf x}\delta(S, x),
		\end{displaymath}
		we have to show that $RA = (A, \alpha)$ is a unitary $(U, \V)$"/graph. That $\alpha$ is reflexive, that is $\alpha(\iota^U_A x, x) = \inf_{S \in \iota^U_A x} \delta(S, x) \geq k$ for all $x \in A$, follows easily from the fact that $A$ is reflexive and unitary; hence $RA$ is a $(U, \V)$"/graph. That $RA$ is left unitary is immediate from the fact that the ultrafilter monad $U$ is normal, so that only right unitariness remains: we have to show that
		\begin{displaymath}
			(U\alpha)\bigpars{\mf X, \iota_A^U(x)} \leq \alpha\bigpars{\mu_A^U(\mf X), x} = \inf_{T \in \mu_A^U(\mf X)} \delta(T, x)
		\end{displaymath}
		for all $\mf X \in U^2A$ and $x \in A$. To see this notice that for every $T \in \mu_A^U(\mf X)$, that is $T^\sharp \in \mf X$, we have
		\begin{multline*}
			(U\alpha)\bigpars{\mf X, \iota_A^U(x)} = \inf_{\substack{\sigma \in \mf X \\ R \in \iota_A^U(x)}} \sup_{\substack{\mf y \in \sigma \\ r \in R}}\inf_{P \in \mf y} \delta(P, r) \\
			\leq \sup_{\substack{\mf y \in T^\sharp \\ r \in \set x}} \inf_{P \in \mf y} \delta(P, r) = \sup_{\mf y \in T^\sharp} \inf_{P \in \mf y} \delta(P, x) \leq \delta(T, x).
		\end{multline*}
		
		To see that $A \mapsto RA$ extends to morphisms consider a continuous map $\map fAC$ between $\V$"/valued preclosure spaces $A = (A, \delta)$ and $C = (C, \zeta)$. Then
		\begin{displaymath}
			\alpha(\mf x, x) = \inf_{S \in \mf x} \delta(S, x) \leq \inf_{S \in \mf x} \zeta(fS, fx) = \inf_{T \in (Uf)(\mf x)} \zeta(T, fx) = (\eps_C \lhom \zeta)\bigpars{(Uf)(\mf x), fx}
		\end{displaymath}
		for all $\mf x \in UA$ and $x \in A$, showing that $f$ forms a morphism $RA \to RC$ of unitary $(U, \V)$"/graphs.
		
		The arguments proving that $R$ is a right adjoint of $\tilde \eps$, as well as that $(\tilde \eps \of R)(A) = A$ for any $\V$"/valued pretopological space $A$, are identical to the ones given in the proof of Theorem~3.6 of \cite{Lai-Tholen17a} (whose statement is reproduced above as \thmref{V-valued topological space}). To complete the proof it thus suffices to prove that $(R \of \tilde \eps)(A) = A$ for any unitary $(U, \V)$"/graph $A = (A, \alpha)$. But this follows immediately from \propref{algebraic morphism}(c).
	\end{proof}

	We close this section with the proof of \propref{algebraic morphism}.
	\begin{proof}[Proof of \propref{algebraic morphism}]
		That the family of $\V$"/relations $\hmap{\eps_A}{PA}{UA}$, whose definition is recalled below, forms an algebraic morphism $\hmap\eps PU$ is proved in Proposition~3.4 of \cite{Lai-Tholen17a}. Thus it remains to prove parts (a), (b) and (c).
		\begin{displaymath}
  		\eps_A(S, \mf x) = \begin{cases}
  			k & \text{if $S \in \mf x$;} \\
  			\bot & \text{otherwise}
  		\end{cases}
  	\end{displaymath}
		
		Part (a): $\eps_A(\iota^P_A, \id) = \iota^U_{A*}$ for all $A$. This is a direct consequence of the fact that, for any $x \in A$ and ultrafilter $\mf x$ on $A$, one has $\set x \in \mf x$ if and only if $\mf x = \iota^U_A(x)$, the principal ultrafilter on $x$.
		
		Part (b): $PJ \hc \eps_B = \eps_A \hc UJ$ for all $\hmap JAB$. The inequality $\leq$ is a consequence of $\hmap\eps PU$ being an algebraic morphism; we claim that the converse inequality holds as well. Indeed for any $R \in PA$ and $\mf y \in UB$ we have
		\begin{align*}
			(\eps_A \hc UJ)(R, \mf y) &= \sup_{\mf x \in UA} \Bigpars{\eps_A(R, \mf x) \tens \inf_{\substack{S \in \mf x \\ T \in \mf y}} \sup_{\substack{s \in S \\ t \in T}} J(s, t)} = \sup_{\mf x \in R^\sharp} \inf_{\substack{S \in \mf x \\ T \in \mf y}} \sup_{\substack{s \in S \\ t \in T}} J(s, t) \\
			&\leq \inf_{T \in \mf y} \sup_{\substack{s \in R \\ t \in T}} J(s, t) \overset{\text{(i)}}= \sup_{T \in \mf y} \inf_{t \in T} \sup_{s \in R} J(s, t) \\
			&= \sup_{T \in PB} \Bigpars{\bigpars{\inf_{t \in T} \sup_{s \in R} J(s, t)} \tens \eps_B(T, \mf y)} = (PJ \hc \eps_B)(R, \mf y),
		\end{align*}
		where the equality denoted (i) follows from the lemma below. We conclude that $PJ \hc \eps_B = \eps_A \hc UJ$ for all $\hmap JAB$.
		
		Part (c): $\eps_A \lhom (\eps_A \hc H) = H$ for all $\hmap H{UA}B$ with $(UH)(\id, \iota^U_B) \leq H(\mu_A^U, \id)$. In Theorem~3.6 of \cite{Lai-Tholen17a} this was proved in case that $\hmap{H = \alpha}{UA}A$ is the $\V$"/valued convergence relation of a $(U, \V)$"/category $A$. We will modify the proof given there slightly so that it generalises to any $\V$"/relation $\hmap H{UA}B$ satisfying $(UH)(\id, \iota^U_B) \leq H(\mu_A^U, \id)$. First notice that $H \leq \eps_A \lhom (\eps_A \hc H)$ follows from the adjunction $\eps_A \hc \dash \ladj \eps_A \lhom \dash$. For the reverse inequality let $\mf x \in UA$ and $y \in B$; to prove that
		\begin{displaymath}
			\bigpars{\eps_A \lhom (\eps_A \hc H)}(\mf x, y) = \inf_{S \in \mf x} \sup_{\mf y \in S^\sharp} H(\mf y, y) \leq H(\mf x, y)
		\end{displaymath}
		it suffices to show that $v \ll \inf_{S \in \mf x} \sup_{\mf y \in S^\sharp} H(\mf y, y)$ implies $v \leq H(\mf x, y)$ for all $v \in \V$. The hypothesis here means that for all $S \in \mf x$ there is $\mf y \in S^\sharp$ with $v \leq H(\mf y, y)$. As a consequence the sets
		\begin{displaymath}
			X_S = \set{\mf y \in S^\sharp \mid v \leq H(\mf y, y)},
		\end{displaymath}
		where $S$ ranges over $\mf x$, form a proper filter base and we can choose an ultrafilter $\mf X \in U^2A$ containing all of them. As $S^\sharp \supseteq X_S \in \mf X$ for all $S \in \mf x$ it follows that $\mu_A^U(\mf X) = \mf x$. Moreover
		\begin{displaymath}
			(UH)\bigpars{\mf X, \iota^U_B(y)} = \inf_{\substack{R \in \mf X\\T \in \iota_B^U(y)}} \sup_{\substack{\mf y \in R\\t \in T}} H(\mf y, t) = \inf_{R \in \mf X} \sup_{\mf y \in R} H(\mf y, y) \geq \inf_{R \in \mf X} v = v
		\end{displaymath}
		where the inequality follows from the fact that every $R \in \mf X$ intersects $X_A$. We conclude that
		\begin{displaymath}
			H(\mf x, y) = H\bigpars{\mu_A^U(\mf X), y} \geq (UH)\bigpars{\mf X, \iota_B^U(y)} \geq v
		\end{displaymath}
		where the first inequality follows from the assumption on $H$. This completes the proof.
	\end{proof}
	The following lemma was used in the proof above; it is a straightforward generalisation of Proposition~1.8.29 of \cite{Lowen97}.
	\begin{lemma} \label{minimax}
		Let $\V$ be a completely distributive lattice, and let $\map fA\V$ be any function. If $\mf x$ is an ultrafilter on $A$ then
		\begin{displaymath}
			\sup_{S \in \mf x} \inf_{s \in S} fs = \inf_{S \in \mf x} \sup_{s \in S} fs.
		\end{displaymath}
	\end{lemma}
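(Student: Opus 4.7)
The plan is to prove the two inequalities separately. The easy direction $\sup_{S \in \mf x} \inf_{s \in S} fs \leq \inf_{S \in \mf x} \sup_{s \in S} fs$ does not need either complete distributivity or the ultrafilter property, only that $\mf x$ is a proper filter: for any $S, T \in \mf x$ we have $S \cap T \neq \emptyset$, so picking $s \in S \cap T$ yields $\inf_{s' \in S} fs' \leq fs \leq \sup_{t \in T} ft$; taking the supremum over $S$ and infimum over $T$ gives the inequality.

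For the reverse inequality write $u = \sup_{S \in \mf x} \inf_{s \in S} fs$ and $v = \inf_{S \in \mf x} \sup_{s \in S} fs$. Since $\V$ is completely distributive we have $v = \sup \set{w \in \V \mid w \ll v}$, so it suffices to prove $w \leq u$ for every $w \ll v$. Fix such $w$. For each $S \in \mf x$ we have $v \leq \sup_{s \in S} fs$, and since $w \ll v$ this implies $w \ll \sup_{s \in S} fs$, so by the very definition of the totally below relation there exists $s_S \in S$ with $w \leq f(s_S)$. Consequently the set
\begin{displaymath}
  B_w = \set{s \in A \mid w \leq fs}
\end{displaymath}
meets every member of $\mf x$.

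Here the ultrafilter hypothesis enters: because $\mf x$ is an ultrafilter on $A$, a subset of $A$ that meets every element of $\mf x$ must itself belong to $\mf x$ (otherwise its complement would lie in $\mf x$ and fail to meet it). Hence $B_w \in \mf x$, and by construction $\inf_{s \in B_w} fs \geq w$, so $u \geq w$. Taking the supremum over all $w \ll v$ yields $v \leq u$. The main (and only delicate) step is verifying that $w \ll v$ transfers to $w \ll \sup_{s \in S} fs$ for each $S \in \mf x$; this is immediate from the definition of $\ll$ since $v \leq \sup_{s \in S} fs$, but it is the place where complete distributivity is genuinely used.
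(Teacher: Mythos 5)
Your proof is correct and takes essentially the same route as the paper's: the easy inequality from properness of the filter, and for the converse, approximating $v = \inf_{S \in \mf x}\sup_{s \in S} fs$ by elements $w \ll v$ (using complete distributivity for $v = \sup\set{w \mid w \ll v}$), extracting for each $S \in \mf x$ a witness $s_S \in S$ with $w \leq f(s_S)$, and invoking ultrafilter maximality to conclude that $B_w = \set{s \in A \mid w \leq fs}$ lies in $\mf x$ --- exactly the set $T$ in the paper's argument. One small quibble with your closing remark: complete distributivity is genuinely used in the approximation $v = \sup\set{w \mid w \ll v}$ (which you did state), not in the transfer of $w \ll v$ to $w \ll \sup_{s \in S} fs$, which holds by the definition of $\ll$ in any complete lattice.
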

	\begin{proof}
		The inequality $\leq$ follows directly from the fact that $\inf_{s \in S} fs \leq \sup_{t \in T} ft$ for all $S, T \in \mf x$. To show $\geq$ assume that $v \ll \inf_{S \in \mf x} \sup_{s \in S} fs$ for some $v \in \V$. Hence for all $S \in \mf x$ there is $s \in S$ with $v \leq fs$. Writing $T \dfn \set{t \in A \mid v \leq ft}$ we thus have $S \isect T \neq \emptyset$ for all $S \in \mf x$. We conclude $T \in \mf x$, from which $v \leq \sup_{S \in \mf x} \inf_{s \in S} fs$ follows.
	\end{proof}

	\section{Modular \texorpdfstring{$T$}{T}"/graphs} \label{modular T-graphs}
	Let $\V$ be a quantale and let $T$ be a lax monad on the thin equipment $\enRel\V$ of $\V$"/relations. In this section we consider $\V$"/categories equipped with compatible $(T, \V)$"/graph structures as follows. Applying the $2$"/functor $\Mod$ (\propref{2-functor Mod}) to $T$ we obtain a normal lax monad $\Mod T$ on the thin equipment $\enProf\V = \Mod(\enRel\V)$ of $\V$"/profunctors between $\V$"/categories (\exref{V-profunctors}). Following Section~4 of \cite{Tholen09} we call $(\Mod T)$"/graphs \emph{modular} $(T,\V)$"/graphs and write $\ModGph{(T, \V)} \dfn \Gph{(\Mod T)}$. A modular $(T, \V)$"/graph $A$ is a $\V$"/category $A = (A, \bar A)$ equipped with a $T$"/graph structure $\hmap\alpha{TA}A$ that is a $\V$"/profunctor, see the lemma below. A morphism of modular $(T, \V)$"/graphs $\map fAC$ is simultaneously a $\V$"/functor $\map f{(A, \bar A)}{(C, \bar C)}$ of $\V$"/categories as well as a morphism $\map f{(A, \alpha)}{(C, \gamma)}$ of $(T, \V)$"/graphs. As in the non"/modular case \eqref{subcategories} we have subcategories
	\begin{displaymath}
		\ModCat{(T, \V)} \into \ModUGph{(T, \V)} \into \ModRGph{(T, \V)} \into \ModGph{(T, \V)}
	\end{displaymath}
	consisting of modular $(T, \V)$"/categories, modular unitary $(T, \V)$"/graphs and modular right unitary ones.
  
  Generalising to lax monads $T$ on an arbitrary thin equipment $\K$, we shall call $(\Mod T)$"/graphs and $(\Mod T)$"/categories \emph{modular} $T$"/graphs and \emph{modular} $T$"/categories respectively, while we write $\ModGph T \dfn \Gph{(\Mod T)}$ and $\ModCat T \dfn \enCat{(\Mod T)}$. It is shown in \cite{Tholen09} that $\enCat T$ forms a full reflective subcategory of $\ModCat T$, via the embedding
  \begin{equation} \label{normalised T-category}
  	\map N{\enCat T}{\ModCat T}\colon (A, \alpha) \mapsto \bigpars{A, \alpha(\iota_A, \id), \alpha}
  \end{equation}
  whose left adjoint is the forgetful functor. We will follow \cite{Cruttwell-Shulman10} in calling a modular $T$"/category $A = (A, \bar A, \alpha)$ \emph{normalised} if it lies in the image of $N$, that is $\bar A = \alpha(\iota_A, \id)$.
  
  Before describing modular $\V$"/valued (pre"/)closure spaces we state a couple of lemmas that describe relations between monoid structures and $T$"/graph structures on a single object.
  \begin{lemma} \label{modular T-graph structure}
  	Let $T$ be a lax monad on a thin equipment $\K$ and let $A = (A, \bar A)$ be a monoid in $\K$. A bimodule $\hmap\alpha{TA}A$ (\defref{monoid}) forms a modular $T$"/graph structure on $A$ in $\Mod(\K)$ precisely if its underlying horizontal morphism forms a $T$"/graph structure on $A$ in $\K$. In that case
  	\begin{enumerate}[label=\textup{(\alph*)}]
  		\item both the modular $T$"/graph $(A, \bar A, \alpha)$ and the $T$"/graph $(A, \alpha)$ are left unitary;
  		\item $(A, \bar A, \alpha)$ is right unitary precisely if $(A, \alpha)$ is;
  		\item $(A, \bar A, \alpha)$ is a modular $T$"/category precisely if $(A, \alpha)$ is a $T$"/category.
  	\end{enumerate}
  \end{lemma}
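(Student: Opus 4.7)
My plan is to reduce every defining cell of a modular $T$-graph to an inequality in $\K$ and to compare it, term by term, with the corresponding inequality for the underlying $T$-graph. The key observation (\defref{monoid}) is that cells in $\Mod(\K)$ between bimodules and monoid homomorphisms are nothing but cells in $\K$ between their underlying morphisms, while the horizontal unit of the monoid $(A,\bar A)$ in $\Mod(\K)$ is $\bar A$ itself (and that of $(\Mod T)(A)=(TA,T\bar A)$ is $T\bar A$). Thus the modular cells and their non-modular counterparts can differ only where identity horizontal morphisms appear in the boundary, namely in the unitor and in the left unitor cells; the right unitor and the associator involve no such units.

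For the main equivalence I would apply \lemref{horizontal cells} to translate both unitors into inequalities in $\K$: the non-modular unitor amounts to $\iota_A^*\leq\alpha$, while the modular unitor amounts to $\iota_A^*\hc\bar A\leq\alpha$. One direction follows from the monoid unit axiom $1_A\leq\bar A$, which gives $\iota_A^*=\iota_A^*\hc 1_A\leq\iota_A^*\hc\bar A\leq\alpha$. The reverse direction uses the bimodule inequality $T\bar A\hc\alpha\hc\bar A\leq\alpha$ together with $1_{TA}\leq T\bar A$ (obtained by prepending the lax comparison $1_{TA}\leq T1_A$ to the $T$-image of the horizontal cell $1_A\leq\bar A$), yielding $\iota_A^*\hc\bar A\leq\alpha\hc\bar A\leq T\bar A\hc\alpha\hc\bar A\leq\alpha$. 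Part (a) falls out of the same ingredients: the modular left unitor $T\bar A\hc\alpha\leq\alpha$ is obtained from the bimodule inequality by absorbing $\bar A$ on the right via $1_A\leq\bar A$, and the non-modular left unitor $T1_A\hc\alpha\leq\alpha$ then follows from $T1_A\leq T\bar A$.

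Parts (b) and (c) are essentially formal: the top morphisms $(T\alpha)(\id,\iota_A)$ and $T\alpha\hc\alpha$ of the right unitor and associator cells coincide in $\K$ and in $\Mod(\K)$ (by the last sentence of \defref{monoid} together with the fact that $\Mod T$ acts indexwise on horizontal morphisms), and no identity bimodules appear elsewhere in their boundaries, so the modular and non-modular versions are literally the same cell in $\K$ and the equivalences are immediate. The only bookkeeping point needing care -- and the main potential obstacle -- is to confirm that $\iota_A$ and $\mu_A$ qualify as monoid homomorphisms between the $\Mod T$-lifted monoids, so that they may appear as vertical sides of cells in $\Mod(\K)$; this is guaranteed by the transformation axiom (\defref{functor and transformation}) applied to $\iota$ and $\mu$ at $\bar A$ and $T\bar A$ respectively.
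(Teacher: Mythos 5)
Your proof is correct and follows essentially the same route as the paper's: both reduce the two unitor cells to horizontal inequalities via \lemref{horizontal cells}, settle one direction by the monoid unit axiom $1_A \leq \bar A$ and the other by the bimodule inequality together with $1_{TA} \leq T\bar A$, and observe that the right unitor and associator cells for $(A, \bar A, \alpha)$ and $(A, \alpha)$ literally coincide, giving (b) and (c) for free. The only cosmetic differences are that you state the unitors in conjoint form $\iota_A^* \hc \bar A \leq \alpha$ where the paper writes $\bar A \leq \alpha(\iota_A, \id)$, and that for the modular half of (a) you argue directly from the bimodule axiom where the paper simply cites normality of $\Mod T$.
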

  \begin{proof}
  	The main assertion states that the existences of the unitor cells (see \defref{graph}) for $(A, \bar A, \alpha)$ and $(A, \alpha)$ are equivalent, which amounts to proving that
  	\begin{displaymath}
  		\bar A \leq \alpha(\iota_A, \id) \qquad \iff \qquad 1_A \leq \alpha(\iota_A, \id).
  	\end{displaymath} 
  	The implication $\implies$ follows from the unit axiom $1_A \leq \bar A$ for monoids (see \defref{monoid}), while $\Leftarrow$ is shown by
  	\begin{displaymath}
  		\bar A = 1_A \hc \bar A \leq \alpha(\iota_A, \id) \hc \bar A = \iota_{A*} \hc \alpha \hc \bar A \leq \iota_{A*} \hc \alpha = \alpha(\iota_A, \id)
  	\end{displaymath}
  	where the second inequality follows from the fact that $\alpha$ is a bimodule.
  	
  	For part (a) notice that $(A, \bar A, \alpha)$ is left unitary because $\Mod T$ is normal. That $(A, \alpha)$ is left unitary is shown by
  	\begin{displaymath}
  		T1_A \hc \alpha \leq T\bar A \hc \alpha \leq \alpha.
  	\end{displaymath}
  	
  	Part (b) is a direct consequence of the fact that the cells expressing right unitarity for $(A, \bar A, \alpha)$ and $(A, \alpha)$ respectively coincide. The same holds for the cells expressing associativity, so that part (c) follows too.
  \end{proof}
  
  In the setting of a $\Set$"/monad $T$ laxly extended to $\V$"/relations the following was proved as Lemma~1 of \cite{Tholen09}. The proof given there, which uses a result analogous to \propref{lax functors preserve restrictions}, applies verbatim to our setting.
  \begin{lemma}[Tholen] \label{compatibility of modular structure}
  	Let $T$ be a lax monad on a thin equipment $\K$. Consider both a monoid structure $\hmap{\bar A}AA$ and a $T$"/category structure $\hmap\alpha{TA}A$ on a single object $A$. The triple $(A, \bar A, \alpha)$ forms a modular $T$"/category precisely if $\bar A \leq \alpha(\iota_A, \id)$.
  \end{lemma}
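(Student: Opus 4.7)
The $(\Rightarrow)$ direction is immediate: a modular $T$"/category in particular forms a $(\Mod T)$"/graph, and its unitor cell---the cell in $\Mod(\K)$ with $\bar A$ on top, $\iota_A$ on the left, $\id$ on the right and $\alpha$ on the bottom---is, by the definition of cells in $\Mod(\K)$ and by \lemref{horizontal cells}, equivalent to the inequality $\bar A \leq \alpha(\iota_A, \id)$ in $\K$.

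For $(\Leftarrow)$, assume $\bar A \leq \alpha(\iota_A, \id) = \iota_{A*} \hc \alpha$. Since $(A, \alpha)$ is already a $T$"/category in $\K$, \lemref{modular T-graph structure}(c) reduces the claim to verifying that $\alpha$ is a bimodule between the monoids $(TA, T\bar A)$ and $(A, \bar A)$, i.e.\ $T\bar A \hc \alpha \hc \bar A \leq \alpha$. I will establish this by separately proving the two inequalities $\alpha \hc \bar A \leq \alpha$ and $T\bar A \hc \alpha \leq \alpha$, after which the bimodule condition follows by composing the former on the left with $T\bar A$.

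The first inequality is obtained by composing the assumption with $\alpha$ on the left and then applying three ingredients in succession: the naturality cell of $\iota$ at the horizontal morphism $\alpha$ gives, via \lemref{horizontal cells}, $\alpha \hc \iota_{A*} \leq \iota_{TA*} \hc T\alpha$; the associator of the $T$"/category $(A, \alpha)$ gives $T\alpha \hc \alpha \leq \mu_{A*} \hc \alpha$; and functoriality of companions together with the monad unit law $\mu_A \of \iota_{TA} = \id_{TA}$ collapses $\iota_{TA*} \hc \mu_{A*}$ to $1_{TA}$. Chaining these yields $\alpha \hc \bar A \leq \alpha \hc \iota_{A*} \hc \alpha \leq \iota_{TA*} \hc \mu_{A*} \hc \alpha = \alpha$.

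For the second inequality, apply $T$ to the cell exhibiting $\bar A \leq \alpha(\iota_A, \id)$. By \propref{lax functors preserve restrictions} its image reads $T\bar A \leq (T\alpha)(T\iota_A, \id) = (T\iota_A)_* \hc T\alpha$, after which the same associator and the other monad unit law $\mu_A \of T\iota_A = \id_{TA}$ produce $T\bar A \hc \alpha \leq (T\iota_A)_* \hc T\alpha \hc \alpha \leq (T\iota_A)_* \hc \mu_{A*} \hc \alpha = \alpha$. The main technical point---and the only place where laxness of $T$ threatens to bite---is this passage to the $T$"/image: one cannot distribute $T$ over the composite $\iota_{A*} \hc \alpha$, but the equivalent restriction form $\alpha(\iota_A, \id)$ is preserved on the nose by \propref{lax functors preserve restrictions}, which is precisely why that proposition is indispensable here.
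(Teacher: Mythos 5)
Your proof is correct, and it follows essentially the same route as the paper: the paper simply cites Lemma~1 of Tholen's \emph{Ordered topological structures}, noting that the argument hinges on a result analogous to \propref{lax functors preserve restrictions}, and your reconstruction is exactly that argument --- reduce the claim via \lemref{modular T-graph structure}(c) to the bimodule inequality $T\bar A \hc \alpha \hc \bar A \leq \alpha$, prove it from the naturality of $\iota$, the associator, and the monad unit laws, and use the restriction form $\alpha(\iota_A, \id)$ so that the $T$-image is computed on the nose despite $T$ being merely lax. Your closing remark correctly identifies the one point where laxness matters and why the restriction-preservation proposition is the indispensable ingredient.
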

  
  \begin{example} \label{modular closure space}
  	Recall from \exref{closure space} that the notions of left unitary $\V$"/valued pseudoclosure space and $\V$"/valued preclosure space coincide. Hence, by \lemref{modular T-graph structure}, the notions of modular $\V$"/valued pseudoclosure space and \emph{modular $\V$"/valued preclosure space} coincide as well: both consist of a $\V$"/category $A = (A, \bar A)$ equipped with a $\V$"/relation $\map\delta{PA \times A}\V$ that satisfies reflexivity and modularity axioms:
  	\begin{enumerate}
  		\item[(R)] $k \leq \delta(\set x, x)$;
  		\item[(M)] $\bigpars{\inf_{t \in T}\sup_{s \in S} A(s, t)} \tens \delta(T, x) \tens A(x, y) \leq \delta(S, y)$,
  	\end{enumerate}
  	for all $x, y \in A$ and $S, T \in PA$. We write $\ModPreCls\V \dfn \ModGph{(P, \V)}$ for the category of modular $\V$"/valued preclosure spaces, whose morphisms are both $\V$"/functors and continuous maps.
  	
  	$\ModPreCls\V$ contains as a full subcategory the category $\ModCls\V$ of \emph{modular $\V$"/valued closure spaces} $A = (A, \bar A, \delta)$, with $\delta$ a $\V$"/valued closure space structure on $A$. In this case, by \lemref{compatibility of modular structure}, the modularity axiom above reduces to
  	\begin{enumerate}
  		\item[(M')] $A(x, y) \leq \delta(\set x, y)$
  	\end{enumerate}
  	for all $x, y \in A$. Under the embedding \eqref{normalised T-category} any $\V$"/valued closure space $A = (A, \delta)$ gives rise to a normalised modular $\V$"/valued closure space $NA$, whose $\V$"/category structure is given by $A(x, y) \dfn \delta(\set x, y)$.
  	
  Taking $\V = \2$ in the above we obtain the notion of a \emph{modular preclosure space}: a preclosure space $A$ equipped with an ordering $\leq$ satisfying $\upset \overline{\upset S} \subseteq \bar S$ for all $S \subseteq A$, where $\upset S = \set{x \in A \mid \exists s \in S\colon s \leq x}$ is the upset generated by $S$. For a \emph{modular closure space} $A$ the latter condition reduces to $x \leq y$ implies $y \in \overline{\set x}$ for all $x, y \in A$. Any closure space $A$ can be regarded as a normalised modular closure space by equipping it with the \emph{specialisation order}: $x \leq y \defeq y \in \overline{\set x}$, that is $\overline{\set x} = \upset x$ for all $x \in A$.
	\end{example}
	
	\begin{example}	\label{modular (U, V)-category}
		A modular $(U, \V)$"/graph is a $\V$"/category $A = (A, \bar A)$ equipped with a $\V$"/valued pseudotopological space structure $\hmap\alpha{UA}A$ (\exref{pseudotopological space}) satisfying the modularity axiom
		\begin{enumerate}
			\item[(M)] $\bigpars{\inf_{\substack{S \in \mf x \\ T \in \mf y}}\sup_{\substack{s \in S \\ t \in T}} A(s, t)} \tens \alpha(\mf y, x) \tens A(x, y) \leq \alpha(\mf x, y)$
		\end{enumerate}
		for all $\mf x, \mf y \in UA$ and $x, y \in A$. We will call $A = (A, \bar A, \alpha)$ a \emph{modular $\V$"/valued pseudotopological space}. If the $\V$"/valued convergence relation $\alpha$ corresponds to a $\V$"/valued topological space structure under \thmref{V-valued topological space}, so that $(A, \alpha)$ is a modular $(U, \V)$"/category, then by \lemref{compatibility of modular structure} the modularity axiom above reduces to
		\begin{enumerate}
			\item[(M')] $A(x, y) \leq \alpha(\iota x, y)$
		\end{enumerate}
		for all $x, y \in A$, where $\iota x$ is the principal ultrafilter on $x$. \thmref{modular V-valued topological space} below shows that the correspondence of \thmref{V-valued topological space} restricts to one between modular $(U, \V)$"/categories and \emph{modular $\V$"/valued topological spaces}, by which we mean modular $\V$"/valued closure spaces $A = (A, \bar A, \delta)$ whose structure $\V$"/relations $\delta$ preserve finite joins \eqref{preserve finite joins}. \emph{Modular $\V$"/valued pretopological spaces} are defined analogously; that they correspond to modular unitary $(U, \V)$"/graphs is proved by \thmref{modular V-valued topological space} as well. As with $\V$"/valued closure spaces, any $\V$"/valued topological space $A = (A, \delta)$ induces a normalised modular $\V$"/valued topological space $NA$ with $\V$"/category structure $A(x, y) = \delta(\set x, y)$.
		
		In particular, by taking $\V = \2$ in the above we recover the notion of \emph{modular topological space} (Section~4 of \cite{Tholen09}): a topological space $A$, with ultrafilter convergence $\alpha$, equipped with an ordering $\leq$ that is contained in the specialisation order of $\alpha$, i.e.\ $x \leq y$ implies $(\iota x)\alpha y$ for all $x, y \in A$.
		
		Taking the Lawvere quantale $\V = \brks{0, \infty}$ instead, a \emph{modular approach space} $A$ (Section~6 of \cite{Tholen09}) is a generalised metric space $A = (A, \bar A)$ equipped with a point"/set distance $\map\delta{PA \times A}{\brks{0, \infty}}$ such that $\delta\bigpars{\set x, y} \leq A(x, y)$ for all $x, y \in A$.
	\end{example}
	
	We denote by $\ModTop\V$ the category of modular $\V$"/valued topological spaces (as defined in the example above), which forms a full subcategory of $\ModCls\V$. Likewise $\ModPreTop\V$ denotes the full subcategory of $\ModPreCls\V$ that consists of modular $\V$"/valued pretopological spaces.
	\begin{theorem} \label{modular V-valued topological space}
		Let $\V$ be a completely distributive quantale. The pair of adjunctions $\tilde \eps \ladj R$ described in \thmref{V-valued topological space} and \thmref{V-valued pretopological space} lift as shown in the diagrams below, where $N$ is as defined in \eqref{normalised T-category} and where forgetful functors are denoted by $U$. Except for the composites $N \of R$ and $\bar R \of N$, any two parallel composites between opposite corners coincide.
		\begin{displaymath}
			\begin{tikzpicture}[textbaseline]
				\matrix(m)[math35, row sep={4em,between origins}, column sep={9em,between origins}]{\ModCat{(U, \V)} & \ModCls\V \\ \enCat{(U, \V)} & \Cls\V \\};
				\path[map]	(m-1-1) edge[transform canvas={yshift=3pt}] node[above] {$\bar{\tilde\eps}$} (m-1-2)
														edge[transform canvas={xshift=-3pt}] node[left] {$U$} (m-2-1)
										(m-1-2)	edge[transform canvas={yshift=-3pt}] node[below] {$\bar R$} (m-1-1)
														edge[transform canvas={xshift=-3pt}] node[left] {$U$} (m-2-2)
										(m-2-1)	edge[transform canvas={xshift=3pt}] node[right] {$N$} (m-1-1)
														edge[transform canvas={yshift=3pt}] node[above] {$\tilde\eps$} (m-2-2)
										(m-2-2) edge[transform canvas={xshift=3pt}] node[right] {$N$} (m-1-2)
														edge[transform canvas={yshift=-3pt}] node[below] {$R$} (m-2-1);
				\path[white]	(m-1-1) edge node[black, yshift=0.1pt, scale=0.6] {$\bot$} (m-1-2)
															edge node[black, scale=0.6] {$\ladj$} (m-2-1)
											(m-1-2) edge node[black, scale=0.6] {$\ladj$} (m-2-2)
											(m-2-1) edge node[black, yshift=0.1pt, scale=0.6] {$\bot$} (m-2-2);
			\end{tikzpicture} \qquad \begin{tikzpicture}[textbaseline]
				\matrix(m)[math35, row sep={4em,between origins}, column sep={10em,between origins}]{\ModUGph{(U, \V)} & \ModPreCls\V \\ \UGph{(U, \V)} & \PreCls\V \\};
				\path[map]	(m-1-1) edge[transform canvas={yshift=3pt}] node[above] {$\bar{\tilde\eps}$} (m-1-2)
														edge node[left] {$U$} (m-2-1)
										(m-1-2)	edge[transform canvas={yshift=-3pt}] node[below] {$\bar R$} (m-1-1)
														edge node[right] {$U$} (m-2-2)
										(m-2-1)	edge[transform canvas={yshift=3pt}] node[above] {$\tilde\eps$} (m-2-2)
										(m-2-2) edge[transform canvas={yshift=-3pt}] node[below] {$R$} (m-2-1);
				\path[white]	(m-1-1) edge node[black, yshift=0.1pt, scale=0.6] {$\bot$} (m-1-2)
											(m-2-1) edge node[black, yshift=0.1pt, scale=0.6] {$\bot$} (m-2-2);
			\end{tikzpicture}
		\end{displaymath}
		Leaving $\V$"/category structures unchanged, the lifts above establish isomorphisms of categories
		\begin{displaymath}
			\ModCat{(U, \V)} \iso \ModTop\V \qquad \text{and} \qquad \ModUGph{(U, \V)} \iso \ModPreTop\V.
		\end{displaymath}
	\end{theorem}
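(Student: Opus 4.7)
The plan is to define the lifts by preserving the $\V$"/category structure unchanged and applying the non"/modular $\tilde\eps$ respectively $R$ to the $T$"/graph data, leaving morphisms unchanged. Concretely I will set $\bar{\tilde\eps}(A, \bar A, \alpha) \dfn (A, \bar A, \eps_A \hc \alpha)$ and $\bar R(A, \bar A, \delta) \dfn (A, \bar A, \eps_A \lhom \delta)$. The first task is to verify that these assignments produce modular objects, that is, that $\eps_A \hc \alpha$ and $\eps_A \lhom \delta$ are $\V$"/profunctors between the appropriate $\V$"/categories on $PA$ and $UA$ on the one side and $A$ on the other. For $\eps_A \hc \alpha$ this will follow by combining the assumed bimodule property of $\alpha$ with the identity $P\bar A \hc \eps_A = \eps_A \hc U\bar A$ from \propref{algebraic morphism}(b); for $\eps_A \lhom \delta$ the same identity, together with the counit $\eps_A \hc (\eps_A \lhom \delta) \leq \delta$, gives the required inequality after transposing across the adjunction $\eps_A \hc \dash \ladj \eps_A \lhom \dash$. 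I expect this step to be the main technical obstacle; everything else amounts to a translation via the properties of $\eps$ collected in \propref{algebraic morphism}.

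For the commutation of the squares, the identities $U \of \bar{\tilde\eps} = \tilde\eps \of U$ and $U \of \bar R = R \of U$ are immediate from the definitions. Since the unit and counit of the non"/modular adjunction $\tilde\eps \ladj R$ act as identities on underlying sets (by \thmref{V-valued topological space} and \thmref{V-valued pretopological space}) and since the lifts do not alter the $\V$"/category structure, these same identities will serve as unit and counit of $\bar{\tilde\eps} \ladj \bar R$. The equality $\bar{\tilde\eps} \of N = N \of \tilde\eps$ will then be obtained from \propref{algebraic morphism}(a) via the computation
\[
	(\eps_A \hc \alpha)(\iota^P_A, \id) = \eps_A(\iota^P_A, \id) \hc \alpha = \iota^U_{A*} \hc \alpha = \alpha(\iota^U_A, \id),
\]
while the failure of $\bar R \of N = N \of R$ is expected in general, since a generic $\V$"/valued closure structure carries a specialisation order distinct from the one induced by ultrafilter convergence.

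Finally, for the isomorphisms $\ModCat{(U, \V)} \iso \ModTop\V$ and $\ModUGph{(U, \V)} \iso \ModPreTop\V$, I will lean on the non"/modular isomorphisms $\enCat{(U, \V)} \iso \vTop\V$ and $\UGph{(U, \V)} \iso \PreTop\V$ established in \thmref{V-valued topological space} and \thmref{V-valued pretopological space}. Since the lifts preserve $\V$"/category structures, it suffices to check that the compatibility conditions match on the two sides: by \lemref{compatibility of modular structure}, $(A, \bar A, \alpha)$ is a modular $(U, \V)$"/category iff $\bar A \leq \alpha(\iota^U_A, \id)$, and by the displayed identity this is in turn equivalent to $\bar A \leq (\eps_A \hc \alpha)(\iota^P_A, \id)$, i.e.\ to $(A, \bar A, \eps_A \hc \alpha)$ being a modular $\V$"/valued topological space. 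The pretopological case is analogous, invoking \lemref{modular T-graph structure} in place of \lemref{compatibility of modular structure}.
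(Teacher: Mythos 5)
Your proposal is correct and follows essentially the same route as the paper's proof: you define the lifts by applying $\tilde\eps$ and $R$ to the structure relations while fixing the $\V$"/category structure, check preservation of modularity via \propref{algebraic morphism}(b) together with transposition across the adjunction $\eps_A \hc \dash \ladj \eps_A \lhom \dash$ (which is exactly the paper's displayed chain of inequalities), and verify $\bar{\tilde\eps} \of N = N \of \tilde\eps$ by the same computation from \propref{algebraic morphism}(a). Your explicit reduction of the isomorphisms to the non"/modular correspondences of \thmref{V-valued topological space} and \thmref{V-valued pretopological space} via \lemref{compatibility of modular structure} and \lemref{modular T-graph structure} merely spells out what the paper leaves implicit, so the two arguments coincide in substance.
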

	\begin{proof}
		Remember that the functors $\tilde\eps$ and $R$ leave underlying sets $A$ unchanged while they act on structure $\V$"/relations $\hmap\alpha{UA}A$ and $\hmap\delta{PA}A$ by $\alpha \mapsto \eps_A \hc \alpha$ and $\delta \mapsto \eps_A \lhom \delta$ respectively. Since the lifts of $\tilde\eps$ and $R$ leave $\V$"/category structures unchanged, it suffices to check that the latter assignments preserve modularity with respect to any given $\V$"/category structure $\hmap{\bar A}AA$ on $A$. That the first assignment does follows easily from \propref{algebraic morphism}(b), when applied to $J = \bar A$. To see that the second does too let $\hmap\delta{PA}A$ be any $\V$"/valued preclosure space structure on $A$ that is modular with respect to $\bar A$, i.e.\ $P\bar A \hc \delta \hc \bar A \leq \delta$. Then, writing $\eps = \eps_A$,
		\begin{multline*}
			U\bar A \hc (\eps \lhom \delta) \hc \bar A \leq \eps \lhom \bigpars{\eps \hc U\bar A \hc (\eps \lhom \delta) \hc \bar A} \\
			= \eps \lhom \bigpars{P\bar A \hc \eps \hc (\eps \lhom \delta) \hc \bar A} \leq \eps \lhom (P\bar A \hc \delta \hc \bar A) \leq \eps \lhom \delta
		\end{multline*}
		where the first two inequalities are given by the unit and counit of the adjunction $\eps \hc \dash \ladj \eps \lhom \dash$ while the equality follows from \propref{algebraic morphism}(b). We conclude that $\eps_A \lhom \delta$ is again modular.
		
		It remains to show the commutativity of the diagrams. It is clear that any two parallel composites containing $U$ coincide, leaving us to prove that
		\begin{displaymath}
			(N \of \tilde\eps)(A, \alpha) = (\bar{\tilde\eps} \of N)(A, \alpha)
		\end{displaymath}
		in the left"/hand diagram, for any $(U, \V)$"/category structure $\alpha$ on $A$. For this it suffices to check that the $\V$"/category structures coincide, which is shown by
		\begin{displaymath}
			(\eps \hc \alpha)(\iota_A^P, \id) = \iota_{A*}^P \hc \eps \hc \alpha = \iota_{A*}^U \hc \alpha = \alpha(\iota_A^U, \id),
		\end{displaymath}
		where the second equality follows from \propref{algebraic morphism}(a).
	\end{proof}

	\section{\texorpdfstring{$T$}{T}"/cocomplete \texorpdfstring{$T$}{T}"/graphs} \label{T-complete T-graphs}
	Let $T$ be a lax monad on a thin equipment $\K$. To be able to generalise the maximum theorem to Kan extensions $\map lBM$ between $T$"/graphs, we need either the Kan extension $l$ itself or its target $M$ to be `well"/behaved'. In \secref{Kan extension section} well"/behaved Kan extensions were described: they are the ones that satisfy the Beck"/Chevalley condition. By well"/behaved $T$"/graphs we mean `$T$"/cocomplete' ones as follows.
	\begin{definition} \label{complete}
		Let $T$ be a lax monad on a thin equipment $\K$. A $T$"/graph $A = (A, \alpha)$ is called \emph{$T$"/cocomplete} if $\alpha = a_*$ in $\K$ for some morphism $\map a{TA}A$.
	\end{definition}
	Applying the above to the induced lax monad $T = \Mod(T)$ on $\Mod(\K)$, by the lemma below a modular $T$"/graph $A = (A, \bar A, \alpha)$ is $T$"/cocomplete whenever $\alpha = \bar A(a, \id)$ in $\K$, for some morphism $\map a{TA}A$. We will see in \exref{Lawvere quantale as a modular approach space} below that the isomorphisms of \thmref{modular V-valued topological space} fail to preserve $T$"/cocompleteness in general.
	
	The closely related but different notion of `$T$"/cocompleteness' for $(T, \V)$"/categories $A = (A, \alpha)$, that is considered in Section~III.5.4 of \cite{Hofmann-Seal-Tholen14}, can be rephrased in terms of the above as follows: $A$ is `$T$"/cocomplete' whenever its corresponding normalised modular $(T, \V)$"/category $NA = \bigpars{A, \alpha(\iota, \id), \alpha}$ is $T$"/cocomplete in our sense, that is $\alpha = \alpha(\iota \of a, \id)$ for some function $\map a{TA}A$.
	\begin{lemma}
		Let $T$ be a lax monad on a thin equipment $\K$ and $A = (A, \bar A, \alpha)$ a modular $T$"/graph. If $\alpha = \bar A (a, \id)$ in $\K$ for some morphism $\map a{TA}A$ then $a$ is a homomorphism of monoids $(TA, T\bar A) \to (A, \bar A)$ so that $\alpha = a_*$ in $\Mod \K$.
	\end{lemma}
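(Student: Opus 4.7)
The plan is to reduce the claim to two ingredients: the bimodule axiom for $\alpha$ and the fact that companions of monoid homomorphisms in $\Mod\K$ are defined via restrictions. First I would verify that $a$ is a monoid homomorphism from $(TA, T\bar A)$ to $(A, \bar A)$. By \lemref{horizontal cells}, the cell expressing this property exists precisely when $T\bar A \hc a_* \leq a_* \hc \bar A$ in $\K$, and the right-hand side is simply $\bar A(a, \id) = \alpha$ by hypothesis. So the task reduces to establishing $T\bar A \hc a_* \leq \alpha$.

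For this inequality the strategy is to ``inflate'' $a_*$ into $\alpha$ and then invoke the bimodule axiom. Concretely, using the unit axiom $1_A \leq \bar A$ of the monoid $(A, \bar A)$, one gets $a_* \leq a_* \hc \bar A = \alpha$, hence $T\bar A \hc a_* \leq T\bar A \hc \alpha$. Since $\alpha$ is a bimodule on the monoids $(TA, T\bar A)$ and $(A, \bar A)$, one has $T\bar A \hc \alpha \hc \bar A \leq \alpha$; applying $1_A \leq \bar A$ on the right one more time gives $T\bar A \hc \alpha \leq T\bar A \hc \alpha \hc \bar A \leq \alpha$. Chaining these yields the desired inequality.

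Once $a$ is known to be a monoid homomorphism, the identification $\alpha = a_*$ in $\Mod\K$ is immediate from \defref{monoid}: the companion in $\Mod\K$ of a monoid homomorphism $\map a{(TA, T\bar A)}{(A, \bar A)}$ is by definition the bimodule $\bar A(a, \id)$, which equals $\alpha$ by hypothesis. I do not anticipate any substantial obstacle; the only delicate point is keeping track of the two invocations of $1_A \leq \bar A$ on the appropriate sides of the horizontal composition, so that the bimodule inequality reduces correctly to the homomorphism inequality.
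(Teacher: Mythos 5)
Your proposal is correct and follows essentially the same route as the paper's proof: both reduce, via \lemref{horizontal cells}, to the inequality $T\bar A \hc a_* \leq \bar A(a, \id) = \alpha$, and both derive it from the unit axiom $1_A \leq \bar A$ together with the bimodule property of $\alpha$, differing only in the order in which these are chained (you inflate $a_*$ to $\alpha$ first and then apply the bimodule axiom, while the paper composes with $\bar A$ on the right first). Your concluding identification $\alpha = a_*$ in $\Mod(\K)$, via the definition $a_* = \bar A(a, \id)$ of companions of monoid homomorphisms, is exactly the step the paper leaves implicit.
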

	\begin{proof}
		By \lemref{horizontal cells} the existence of the structure cell exhibiting $a$ as a homomorphism of monoids can be deduced from
		\begin{displaymath}
			T\bar A \hc a_* = T\bar A \hc a_* \hc 1_A \leq T\bar A \hc a_* \hc \bar A = T\bar A \hc \alpha \leq \alpha = \bar A(a, \id),
		\end{displaymath}
		where we use the unit axiom for $\bar A$ and the fact that $\alpha$ is a bimodule.
	\end{proof}
	
	\begin{example} \label{complete modular closure space}
		Let $P$ be the powerset monad extended to $\V$"/relations taking values in a quantale $\V$. A modular $\V$"/valued preclosure space $A = (A, \bar A, \delta)$ (see \exref{modular closure space}) is $P$"/cocomplete whenever for each subset $S \in PA$ there exists a tacitly chosen \emph{generic point} $x_0 \in A$ such that
		\begin{displaymath}
			\delta (S, y) = A(x_0, y).
		\end{displaymath}
		for all $y \in A$.
		
		For a modular preclosure space $A = (A, \leq, S \mapsto \bar S)$ (\exref{modular closure space}) the above reduces to $\bar S = \upset x_0$, that is all closed subsets of $A$ are principal upsets. For the normalised modular closure space $NA$ induced by a closure space $A$ the latter means $\bar S = \overline{\set{x_0}}$, that is every closed subset of $A$ contains a generic point.
	\end{example}
	
	\begin{example} \label{complete modular approach space}
		Let $U$ be the ultrafilter monad extended to $\V$"/relations taking values in a completely distributive quantale $\V$. A modular $\V$"/valued pretopological space $A$, regarded as a $\V$"/category $(A, \bar A)$ equipped with a convergence $\V$"/profunctor \mbox{$\map\alpha{UA \times A}\V$} (see \exref{modular (U, V)-category}), is $U$"/cocomplete whenever for each $\mf x \in UA$ there exists a tacitly chosen \emph{generic point} $x_0 \in A$ such that
		\begin{displaymath}
			\alpha(\mf x, y) = A(x_0, y)
		\end{displaymath}
		for all $y \in A$. In Section~III.5.6 of \cite{Hofmann-Seal-Tholen14} $U$"/cocomplete normalised modular topological spaces are characterised in terms of `irreducible' closed subsets and, generalising this, in Section~III.5.9 $U$"/cocomplete normalised modular approach spaces are characterised in terms of `irreducible' continuous maps.
	\end{example}
	
	The remainder of this section describes how every completely distributive quantale $\V$ can itself be regarded as a modular $\V$"/valued topological space, that is both normalised and $U$"/cocomplete. With this aim in mind, let $P$ be the powerset monad extended to $\V$"/relations taking values in a completely distributive quantale $\V$. We consider the vertical part $(\Mod P)_\textup v$ of the induced monad $\Mod P$ (see \propref{2-functor Mod}) on the thin equipment $\Mod(\enRel\V) = \enProf\V$ of $\V$"/profunctors. Writing again $P \dfn (\Mod P)_\textup v$, this is the powerset monad on the category $\enCat\V = (\enProf\V)_\textup v$ of $\V$"/categories, whose Eilenberg"/Moore category $(\enCat\V)^P$ of algebras consists of $\V$"/categories $(A, \bar A)$ equipped with a $P$"/algebra structure map $\map a{PA}A$ that is a $\V$"/functor.
		
		We will consider the images of such algebras under the composite functor
	\begin{equation} \label{composite}
		\pars{\enCat\V}^P \xrar C \ModCls\V \xrar{\bar R} \ModCat{(U, \V)}
	\end{equation}	
	where $\bar R$ is given in \thmref{modular V-valued topological space} and $C$ is the ``composition functor'' described in Section~4 of \cite{Tholen09}, when applied to the powerset monad. This composite maps a $P$"/algebra $A = (A, \bar A, a)$ to the modular $(U, \V)$"/category $(\bar R \of C)(A) = (A, \bar A, \alpha)$ whose $\V$"/valued convergence relation is given by
	\begin{displaymath}
		\hmap\alpha{UA}A\colon (\mf x, x) \mapsto \inf_{S \in \mf x} \bar A\bigpars{a(S), x}.
	\end{displaymath}
	
	The examples below describe two types of images under the composite \eqref{composite}. Both depend on the fact that a complete lattice $A$ (for instance $A = \V$) admits two algebra structures over the powerset monad $P$ on $\Set$, given by
	\begin{equation} \label{complete lattice algebra structure}
		\map{a_{\inf}}{PA}A\colon S\mapsto \inf S \qquad \text{and} \qquad \map{a_{\sup}}{PA}A\colon S\mapsto \sup S
	\end{equation}
	respectively.
	\begin{example} \label{Scott topology}
		Let $A$ be a complete lattice. It is easily checked that the $P$"/algebra structure $a_{\inf}$ above is an order preserving map $\map{a_{\inf}}{(PA, P{\leq})}{(A, \leq)}$, so that we may regard $(A, \leq, a_{\inf})$ as an algebra in $\pars{\enCat\2}^P$. Applying the composite functor \eqref{composite}, where $\V = \2$, we obtain a modular topology (\exref{modular (U, V)-category}) on $A$ whose ultrafilter convergence relation we denote by $\hmap{\alpha_{\inf}}{UA}A$; it is given by
		\begin{displaymath}
			\mf x \alpha_{\inf} x \quad \defeq \quad \sup_{S \in \mf x} \inf S \leq x
		\end{displaymath}
		for all $\mf x \in UA$ and $x \in A$. Dually the $P$"/algebra structure $a_{\sup}$ given in \eqref{complete lattice algebra structure} induces a modular topology on the complete lattice $\rev A = (A, \geq)$ that is obtained by reversing the order on $A$.
		
		The proposition below shows that if $A$ is completely distributive then the topology corresponding to the convergence relation $\alpha_{\inf}$ is the \emph{Scott topology} \cite{Scott72} as follows. The open subsets $O \subseteq A$ are the downsets satisfying
		\begin{equation} \label{Scott open}
			\Forall_{D \in \DnDir A} \inf D \in O \implies D \isect O \neq \emptyset
		\end{equation}
		where $D$ ranges over all \emph{down"/directed} subsets of $A$: a subset $D \subseteq A$ is down"/directed whenever it is non"/empty and every finite subset of $D$ has a lower bound in $D$, that is for all $x, y \in D$ there is $z \in D$ with $z \leq x$ and $z \leq y$.
		
		For example the Scott topology on $\brks{-\infty, \infty}$, with respect to the reversed order $\geq$, consists of the open subsets of the form $(x, \infty ]$, where $x \in \brks{0, \infty}$, together with $\brks{-\infty, \infty}$ itself. A function $\map fA{\brks{-\infty, \infty}}$ that is continuous with respect to this topology is called \emph{lower semi"/continuous}; see e.g.\ Section~IV.8 of \cite{Berge59} or Section~2.10 of \cite{Aliprantis-Border06}. Dually, equipping $\brks{-\infty, \infty}$ with the Scott topology with respect to the natural order $\leq$ instead, we obtain the notion of \emph{upper semi"/continuous} function $\map fA{\brks{-\infty, \infty}}$.
		
		On $\2 = \set{\bot \leq \top}$ the Scott topology coincides with the \emph{Sierpi\'nski topology}, which has $\set{\bot}$ as its only non"/trivial open subset.
	\end{example}
	\begin{proposition}
		Let $A$ be a completely distributive complete lattice. The topology corresponding to the convergence relation $\hmap{\alpha_{\inf}}{UA}A$, described in the example above, is the Scott topology. 
	\end{proposition}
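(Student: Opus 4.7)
The plan is to verify the two inclusions of topologies separately, using the characterisation from \exref{ultrafilter convergence}: $O$ lies in the topology induced by $\alpha_{\inf}$ precisely when, for every ultrafilter $\mf x$ and point $x \in O$ with $\sup_{S \in \mf x} \inf S \leq x$, we have $O \in \mf x$.

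For the direction that every Scott open set $O$ is $\alpha_{\inf}$-open, fix $\mf x$ and $x$ with $\sup_{S \in \mf x} \inf S \leq x \in O$. The key step is to invoke \lemref{minimax} with $\V = A$ (which is completely distributive) and $f = \id_A$, giving the crucial identity
\begin{displaymath}
	w \dfn \sup_{S \in \mf x} \inf S = \inf_{S \in \mf x} \sup S.
\end{displaymath}
Since $w \leq x$ and $O$ is a downset containing $x$, we have $w \in O$. The family $E \dfn \set{\sup S \mid S \in \mf x}$ is down-directed because $S_1 \isect S_2 \in \mf x$ and $\sup(S_1 \isect S_2) \leq \sup S_i$, with $\inf E = w \in O$. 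Scott openness then yields $S \in \mf x$ with $\sup S \in O$; since $O$ is a downset, $S \subseteq O$, and so $O \in \mf x$ as required.

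For the converse direction, let $O$ lie in the convergence topology. That $O$ is a downset follows by testing with principal ultrafilters: for $y \leq x \in O$, the ultrafilter $\iota y$ satisfies $\sup_{S \in \iota y} \inf S = y \leq x$, so $\iota y \alpha_{\inf} x$ forces $O \in \iota y$, i.e.\ $y \in O$. For the inf-accessibility condition, let $D$ be down-directed with $\inf D \in O$ and suppose for contradiction that $D \isect O = \emptyset$. The sets $D_{\leq d} \dfn \set{e \in D \mid e \leq d}$, for $d \in D$, form a filter base (down-directedness of $D$ ensures $D_{\leq d_1} \isect D_{\leq d_2} \supseteq D_{\leq d_3}$ for any common lower bound $d_3$), and $D_{\leq d} \subseteq D \subseteq A \setminus O$. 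Extending to an ultrafilter $\mf x$ gives $A \setminus O \in \mf x$. For any $S \in \mf x$ and $d \in D$, the set $S \isect D_{\leq d}$ is non-empty, producing $e \in S$ with $e \leq d$; hence $\inf S \leq d$ for all $d \in D$, and so $\inf S \leq \inf D$. Taking the supremum over $S \in \mf x$ shows $\mf x \alpha_{\inf} \inf D$, which together with $\inf D \in O$ forces $O \in \mf x$, contradicting $A \setminus O \in \mf x$.

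The main obstacle is recognising how complete distributivity enters: naively one wants to analyse the up-directed family $\set{\inf S \mid S \in \mf x}$ which has supremum $\leq x$, but Scott openness speaks about \emph{infima} of down-directed sets. The minimax identity of \lemref{minimax} is exactly what exchanges these, converting the convergence hypothesis into a statement about $\inf$ of the down-directed family $E = \set{\sup S \mid S \in \mf x}$, which is then amenable to the Scott openness condition.
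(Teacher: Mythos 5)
Your proof is correct and takes essentially the same route as the paper's: both hinge on \lemref{minimax} to exchange $\sup_{S \in \mf x}\inf S$ with $\inf_{S \in \mf x}\sup S$, use the down-directed family $\set{\sup S \mid S \in \mf x}$ for the direction from Scott openness to convergence openness, and build an ultrafilter from the filter base $\set{\downset d \isect D}_{d \in D}$ (your $D_{\leq d}$) for the converse. If anything you are slightly more complete, since you verify the downset condition via principal ultrafilters (left implicit in the paper's sketch) and merely phrase the converse as a contradiction where the paper concludes $D \isect O \neq \emptyset$ directly from $D, O \in \mf x$.
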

	\begin{proof}[Sketch of the proof.]
		By \lemref{minimax} we have $\sup_{S \in \mf x} \inf S = \inf_{S \in \mf x} \sup S$ in the definition of $\alpha_{\inf}$ so that, by \exref{ultrafilter convergence}, the topology corresponding to $\alpha_{\inf}$ is given as follows: $O \subseteq A$ is open precisely if the equivalent conditions below hold.
		\begin{displaymath}
			\Forall_{\substack{\mf x \in UA \\ x \in O}} \bigpars{\sup_{S \in \mf x} \inf S \leq x \implies O \in \mf x} \qquad \iff \qquad \Forall_{\substack{\mf x \in UA \\ x \in O}} \bigpars{\inf_{S \in \mf x} \sup S \leq x \implies O \in \mf x}
		\end{displaymath}
		
		To see that this implies that $O$ satisfies \eqref{Scott open} consider, for any down"/directed subset $D \subseteq A$ with $\inf D \in O$, any ultrafilter $\mf x \in UA$ generated by the proper filter base $\set{\downset x \isect D}_{x \in D}$. Since $\downset x \in \mf x$ for all $x \in D$ it follows that $\inf_{S \in \mf x} \sup S \leq \inf D$, so that $O \in \mf x$ by the above. Because $D \in \mf x$ we conclude that $D \isect O \neq \emptyset$.
		
		Conversely if $O \subseteq A$ is a downset satisfying \eqref{Scott open} then it satisfies the equivalent conditions above. Indeed for any $\mf x \in UA$ and $x \in O$ with $\inf_{S \in \mf x} \sup S \leq x$, the set $D = \set{\sup S}_{S \in \mf x}$ is down"/directed with $\inf D \leq x \in O$. As $O$ is a downset $\inf D \in O$ follows so that $D \isect O \neq \emptyset$ by \eqref{Scott open}. Hence $\sup S \in O$ for some $S \in \mf x$ which, again because $D$ is a downset, implies $S \subseteq O$; we conclude that $O \in \mf x$.
	\end{proof}
	\begin{example} \label{V as a modular approach space}
		Here we consider the $P$"/algebra structure $a_{\inf}$ given by \eqref{complete lattice algebra structure} on a completely distributive quantale $\V = A$. Since the inner hom $\dashcirc$ of $\V$ (see \exref{V-profunctors}) is contravariant in the first variable and an $\inf$"/map in the second, we have
		\begin{displaymath}
			(P\V_{\dashcirc})(S, T) = \inf_{t \in T} \sup_{s \in S} s \dashcirc t \leq (\inf S) \dashcirc (\inf T)
		\end{displaymath}
		for all $S, T \in P\V$, showing that $a_{\inf}$ is a $\V$"/functor $\map{a_{\inf}}{P\V_{\dashcirc}}{\V_{\dashcirc}}$. Hence $(\V_{\dashcirc}, a_{\inf})$ forms a $P$"/algebra in $\pars{\enCat\V}^P$ so that, under the composite functor \eqref{composite}, $\V_{\dashcirc}$ becomes a modular $(U, \V)$"/category (\exref{modular (U, V)-category}), whose $\V$"/valued convergence relation we denote by $\map{\nu_{\inf}}{U\V \times \V}\V$; it is given by
		\begin{displaymath}
			\nu_{\inf}(\mf x, x) = (\sup_{S \in \mf x} \inf S) \dashcirc x.
		\end{displaymath}
		Notice that this defines a modular $(U, \V)$"/category structure on $\V_{\dashcirc}$ that is both normalised (see \eqref{normalised T-category}) and $U$"/cocomplete (\defref{complete}).

		Analogous to the above, the dual $P$"/algebra structure $a_{\sup}$ on $\V$ given by \eqref{complete lattice algebra structure} froms a $\V$"/functor $\map{a_{\sup}}{P\V_{\circdash}}{\V_{\circdash}}$ and thus induces a modular $(U, \V)$"/category structure $\nu_{\sup}$ on $\V_{\circdash}$, that is given by $\nu_{\sup}(\mf x, x) = (\inf_{S \in \mf x} \sup S) \circdash x$.
	\end{example}
	\begin{example} \label{Lawvere quantale as a modular approach space}
		By applying the previous example to the Lawvere quantale $\brks{0, \infty}$, equipped with the generalised metric $\brks{0, \infty}_{\dashcirc}(x, y) = y \ominus x$ (\exref{non-expansive relation}), we obtain the metric convergence relation $\hmap{\nu_{\sup}}{U\brks{0, \infty}}{\brks{0, \infty}}$ given by 
		\begin{displaymath}
			\nu_{\sup}(\mf x, x) = x \ominus (\inf_{S \in \mf x}\sup S)
		\end{displaymath}
		for all $\mf x \in U\brks{0, \infty}$ and $x \in \brks{0, \infty}$. Dually, equipping $\brks{0, \infty}$ with the reversed metric $\brks{0, \infty}_{\circdash}(x, y) = x \ominus y$ instead, we obtain the metric convergence relation $\nu_{\inf}$ that is given by
		\begin{displaymath}
			\nu_{\inf}(\mf x, x) = (\sup_{S \in \mf x}\inf S) \ominus x.
		\end{displaymath}
	
	Under the isomorphisms of \thmref{modular V-valued topological space} the above metric convergence relations correspond to the point"/set distances given by
		\begin{displaymath}
			\delta_{\sup}(S, x) = \begin{cases}
				x \ominus (\sup S) & \text{if $S \neq \emptyset$;} \\
				\infty & \text{if $S = \emptyset$}
			\end{cases} \quad \text{and} \quad \delta_{\inf}(S, x) = \begin{cases}
				(\inf S) \ominus x & \text{if $S \neq \emptyset$;} \\
				\infty & \text{if $S = \emptyset$}				
			\end{cases}
		\end{displaymath}
		respectively, for all $S \in PA$ and $x \in A$. The first of these is used throughout \cite{Lowen97}, see Examples~1.8.33 therein. While the metric convergence relations $\nu_{\sup}$ and $\nu_{\inf}$ are $U$"/cocomplete, notice that both point"/set distances $\delta_{\sup}$ and $\delta_{\inf}$ fail to be $P$"/cocomplete.
		
		Proving that $\nu_{\sup}$ corresponds to $\delta_{\sup}$ amounts to showing that $\delta_{\sup}(S, x) = (\eps_{\brks{0, \infty}} \hc \nu_{\sup})(S, x)$ for all $S$ and $x$, where $\eps_{\brks{0, \infty}}$ is given in \propref{algebraic morphism}. If $S = \emptyset$ this follows from $\emptyset^\sharp = \emptyset$. If $S \neq \emptyset$ then, because $S \mapsto x \ominus (\sup S)$ preserves binary joins, the argument given in the second paragraph of the proof of Theorem~3.6 of \cite{Lai-Tholen17a} can be applied without change.
	\end{example}
	
	\begin{remark}
		For a commutative and completely distributive quantale $\V$, Clementino and Hofmann describe in \cite{Clementino-Hofmann04} a general construction that extends a `suitable' monad $T$ on $\Set$ to the thin equipment $\enRel\V$. In \cite{Clementino-Hofmann09} they show that in this setting $\V$ admits a $T$"/algebra structure whose structure map is a $\V$"/functor, thus generalising \exref{V as a modular approach space} above in the case that $\V$ is commutative.
	\end{remark}
	
	\section{Horizontal \texorpdfstring{$T$}{T}"/morphisms} \label{horizontal T-morphisms}
	The following definition generalises the notions of hemicontinuity for relations between topological spaces (see Section~VI.1 of \cite{Berge59} or Section~17.2 of \cite{Aliprantis-Border06}) to notions of `open' and `closed' horizontal morphism between $T$"/graphs. In \defref{open closed vertical morphism} below these notions are extended to vertical morphisms.
	
	Each of the generalisations of the maximum theorem given in the next section involves a Kan extension along either an open or closed horizontal morphism. Some of these generalisations provide conditions ensuring that the Kan extension itself is an open or closed morphism.
	\begin{definition} \label{open closed horizontal morphism}
		Let $T$ be a lax monad on a thin equipment $\K$. A horizontal morphism $\hmap JAB$ between $T$"/graphs $A = (A, \alpha)$ and $B = (B, \beta)$ is called
		\begin{enumerate}[label=-]
			\item \emph{$T$"/open} if $\alpha \hc J \leq TJ \hc \beta$;
			\item \emph{$T$"/closed} if $TJ \hc \beta \leq \alpha \hc J$.
		\end{enumerate}
	\end{definition}
	Notice that if $T$ is a lax monad on the thin equipment $\enRel\V$ of relations taking values in a quantale $\V$, and $T \dfn \Mod(T)$ is the induced lax monad on $\enProf\V$, then $T$"/open/$T$"/closed horizontal morphisms in $\enProf\V$ are precisely those $\V$"/profunctors whose underlying $\V$"/relations are $T$"/open/$T$"/closed.
	
	\begin{example} \label{open/closed relations}
		Let $P$ be the powerset monad extended to $\V$"/relations (\exref{powerset monad}). A $\V$"/relation $\hmap JAB$ between $\V$"/valued pseudoclosure spaces $A = (A, \delta)$ and $B = (B, \zeta)$ (\exref{closure space}) is $P$"/open if
		\begin{enumerate}
			\item[(O)] $\displaystyle\delta(S, x) \tens J(x, y) \leq \sup_{T \in PB} \bigpars{\inf_{t \in T}\sup_{s \in S} J(s, t)} \tens \zeta(T, y)$ 
		\end{enumerate}
		for all $S \in PA$, $x \in A$ and $y \in B$. Dually $J$ is $P$"/closed if
		\begin{enumerate}
			\item[(C)] $\displaystyle\bigpars{\inf_{t \in T}\sup_{s \in S}J(s, t)} \tens \zeta(T, y) \leq \sup_{x \in A} \delta(S, x) \tens J(x,y)$
		\end{enumerate}
		for all $S \in PA$, $T \in PB$ and $y \in B$.
		
		It is straightforward to show that if $\hmap JAB$ is discrete, that is $\im J \subseteq \set{\bot, k}$, while $B$ is a $\V$"/valued preclosure space (\exref{closure space}), then the axioms above reduce to
		\begin{enumerate}
			\item[(O')] $\delta(S, x) \tens J(x, y) \leq \zeta(J_k S, y)$;
			\item[(C')] $\zeta(J_k S, y) \leq \displaystyle\sup_{z \in \rev J_k y} \delta(S, z)$,
		\end{enumerate}
		for all $S \in PA$, $x \in A$ and $y \in B$. Here $\hmap{J_k}AB$ is the ordinary relation defined by $xJ_ky \defeq J(x, y) = k$ for all $x \in A$ and $y \in B$.
		
		Choosing $\V = \2$ in the above, the proposition below shows that a relation $\hmap JAB$ between closure spaces is open precisely if, for any open $O \subseteq B$, the preimage $\rev JO$ is $P$"/open in $A$. Dually it is easy to show that $J$ is $P$"/closed precisely if, for every closed $V \subseteq A$, the image $JV$ is closed in $B$.
	\end{example}
	
	\begin{proposition} \label{P-open}
		Let $P$ be the powerset monad on $\Rel$. For a relation $\hmap JAB$ between closure spaces the following are equivalent:
		\begin{enumerate}[label=\textup{(\alph*)}]
			\item $J$ is $P$"/open;
			\item $J\bar S \subseteq \overline{JS}$ for all $S \subseteq A$;
			\item $\rev JO$ is open in $A$ for all $O \subseteq B$ open.
		\end{enumerate}
	\end{proposition}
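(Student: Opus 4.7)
The plan is to prove $(a) \iff (b)$ directly by unwinding the definitions, and to prove $(b) \iff (c)$ by the classical argument using complements of open sets and open neighbourhoods of points. Since we are in the case $\V = \2$, every relation $\hmap JAB$ is automatically discrete (its image lies in $\set{\bot, \top} = \set{\bot, k}$) and $J_k = J$, so axiom (O') from the preceding example reduces to the condition: $x \in \bar S$ and $xJy$ imply $y \in \overline{JS}$. This is precisely $J\bar S \subseteq \overline{JS}$, yielding $(a) \iff (b)$.

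For $(b) \implies (c)$, I will take $O \subseteq B$ open and set $S \defeq A \setminus \rev JO$. By construction $JS \cap O = \emptyset$, so $JS \subseteq B \setminus O$; since closure spaces in the sense of \exref{closure space} are idempotent by axiom (T), $B \setminus O$ is closed, giving $\overline{JS} \subseteq B \setminus O$. Applying $(b)$ then yields $J\bar S \subseteq \overline{JS} \subseteq B \setminus O$, which is equivalent to $\bar S \subseteq A \setminus \rev JO = S$. Hence $S$ is closed and $\rev JO$ is open.

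For $(c) \implies (b)$, given $x \in \bar S$ and $xJy$ I will argue by contradiction that $y \in \overline{JS}$. If not, then $O \defeq B \setminus \overline{JS}$ is an open set containing $y$, and by $(c)$ its preimage $\rev JO$ is open and contains $x$. Because $x \in \bar S$, the standard characterisation ``every open neighbourhood of a closure point meets the set'' forces $\rev JO$ to intersect $S$, i.e.\ there exist $s \in S$ and $t \in O$ with $sJt$. But then $t \in JS \subseteq \overline{JS}$, contradicting $t \in B \setminus \overline{JS}$.

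The only substantive background point, rather than a real obstacle, is the ``open neighbourhood'' characterisation of closure invoked in the last step. This follows routinely from the closure space axioms (R), (E) and (T): these make $\bar{\dash}$ idempotent, monotone and extensive, so closed sets are precisely the complements of opens and $\bar S$ is the smallest closed superset of $S$; consequently $x \in \bar S$ iff no open neighbourhood of $x$ is contained in $A \setminus S$.
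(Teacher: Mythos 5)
Your proposal is correct and takes essentially the same route as the paper: (a)$\iff$(b) by unwinding axiom (O'), (b)$\implies$(c) via the complement $A \setminus \rev JO$, and (c)$\implies$(b) via the open set $B \setminus \overline{JS}$, with idempotency of closure (from axiom (T)) guaranteeing that $\overline{JS}$ is closed. The only cosmetic difference is that the paper runs (c)$\implies$(b) as a set-level inclusion chain and argues both directions by contradiction, whereas you phrase (c)$\implies$(b) pointwise through the ``open neighbourhood of a closure point meets the set'' characterisation; the underlying content is identical.
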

	\begin{proof}
		(a) $\iff$ (b) follows immediately from axiom (O') in the previous example.
		
		(b) $\implies$ (c). Assume that $O \subseteq B$ is open but its preimage $\rev JO \subseteq A$ is not, that is $\overline{A - \rev JO} \nsubseteq A - \rev JO$. Thus $\overline{A - \rev JO} \isect \rev JO \neq \emptyset$ or, equivalently, $J\overline{A - \rev JO} \isect O \neq \emptyset$. But part (b) implies
		\begin{displaymath}
			J\overline{A - \rev JO} \subseteq \overline{J(A - \rev JO)} \subseteq \overline{B - O} = B - O,
		\end{displaymath}
		contradicting the latter.
		
		(c) $\implies$ (b). Assuming (c), suppose that (b) does not hold, i.e.\ for some $S \subseteq A$ we have $J\bar S \nsubseteq \overline{JS}$ or, equivalently, $\bar S \isect \rev J(B - \overline{JS}) \neq \emptyset$. But this is contradicted by
		\begin{displaymath}
			\bar S = \overline{A - (A - S)} \subseteq \overline{A - \rev J(B - JS)} \subseteq \overline{A - \rev J(B - \overline{JS})} = A - \rev J(B - \overline{JS}),
		\end{displaymath}
		where the last equality follows from part (c).
	\end{proof}
	
	The following theorem describes open and closed $\V$"/relations $\hmap JAB$ between $(U, \V)$"/categories $A$ and $B$ in terms of the corresponding $\V$"/valued topological space structures on $A$ and $B$. In order to state it we need the following definition.
	\begin{definition} \label{U-compact}
		Let $U$ be the ultrafilter monad on $\enRel\V$, where $\V$ is a completely distributive quantale. Given a $(U, \V)$"/graph $A = (A, \alpha)$ and a set $B$ we will call a $\V$"/relation $\hmap JAB$ \emph{$U$"/compact} whenever $(UJ)(\id, \iota_B) \leq \alpha \hc J$, that is
	\begin{displaymath}
		\inf_{S \in \mf x} \sup_{s \in S} J(s, y) \leq \sup_{x \in A} \alpha(\mf x, x) \tens J(x, y)
	\end{displaymath}
	for all $\mf x \in UA$ and $y \in B$.
	\end{definition}
	If $\hmap JAB$ is discrete, i.e.\ $\im J \subseteq \set{\bot, k}$, then the condition above reduces to
	\begin{displaymath}
		k \leq \sup_{x \in \rev J_k y} \alpha(\mf y, x)
	\end{displaymath}
	for all $y \in B$ and $\mf y \in UA$ with $\rev J_k y \in \mf y$. In particular if $\V = \2$, so that $A$ is a pseudotopological space and $J$ is an ordinary relation, this means that every ultrafilter on $\rev Jy$ converges to some $x \in \rev Jy$; that is, for each $y \in B$ the preimage $\rev Jy$ is compact in $A$.
	\begin{theorem} \label{P and U horizontal morphism}
		Let $\V$ be a completely distributive quantale and let $U$ and $P$ be the ultrafilter and powerset monads on $\enRel\V$. Consider $(U, \V)$"/graphs $A = (A, \alpha)$ and $B = (B, \beta)$ as well as their induced $\V$"/valued preclosure space structures $\delta = \eps_A \hc \alpha$ and $\zeta = \eps_B \hc \beta$; see \propref{algebraic functor}. For a $\V$"/relation $\hmap JAB$ the following hold:
		\begin{enumerate}[label=\textup{(\alph*)}]
			\item if $J$ is $U$"/open as a $\V$"/relation of $(U, \V)$"/graphs then it is $P$"/open as a $\V$"/relation $\hmap J{(A, \delta)}{(B, \zeta)}$ of $\V$"/valued preclosure spaces;
			\item if $J$ is $U$"/closed as a $\V$"/relation of $(U, \V)$"/graphs then it is both $U$"/compact, in the sense above, as well as $P$"/closed as a $\V$"/relation $\hmap J{(A, \delta)}{(B, \zeta)}$ of $\V$"/valued preclosure spaces.
		\end{enumerate}
		The converse of \textup{(a)} holds as soon as $B$ is unitary and $U(UJ \hc \beta) = U^2J \hc U\beta$; the converse of \textup{(b)} holds whenever $A$ is a $(U, \V)$"/category and $U(\alpha \hc J) = U\alpha \hc UJ$.
	\end{theorem}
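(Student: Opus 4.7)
The proof strategy is to reduce the inequalities defining $P$"/openness and $P$"/closedness to those defining $U$"/openness and $U$"/closedness via the identity $PJ \hc \eps_B = \eps_A \hc UJ$ of \propref{algebraic morphism}(b), and to go back via the adjunction $\eps_A \hc \dash \ladj \eps_A \lhom \dash$ combined with the cancellation formula $\eps_A \lhom (\eps_A \hc H) = H$ of \propref{algebraic morphism}(c), which is available for those $\hmap H{UA}B$ satisfying $UH \hc \iota_B^* \leq \mu_{A*} \hc H$.

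For the forward direction of (a), the chain
\begin{displaymath}
	\delta \hc J = \eps_A \hc \alpha \hc J \leq \eps_A \hc UJ \hc \beta = PJ \hc \eps_B \hc \beta = PJ \hc \zeta
\end{displaymath}
establishes $P$"/openness from $U$"/openness. The analogous chain with the inequality reversed yields $P$"/closedness from $U$"/closedness in (b), while $U$"/compactness follows by combining the unitor cell for $B$, which by \lemref{horizontal cells} furnishes $\iota_B^* \leq \beta$, with $U$"/closedness to give $(UJ)(\id, \iota_B) = UJ \hc \iota_B^* \leq UJ \hc \beta \leq \alpha \hc J$.

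For the converses, the adjunction turns $P$"/openness and $P$"/closedness into $\alpha \hc J \leq \eps_A \lhom (\eps_A \hc UJ \hc \beta)$ and $UJ \hc \beta \leq \eps_A \lhom (\eps_A \hc \alpha \hc J)$ respectively, and the task reduces to identifying the right"/hand sides with $UJ \hc \beta$ and $\alpha \hc J$ via \propref{algebraic morphism}(c). The main technical step, then, is verifying the hypothesis of that cancellation for $H = UJ \hc \beta$ and $H = \alpha \hc J$; this is where the remaining assumptions on $A$, $B$ and the behaviour of $U$ on composites enter the picture.

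For $H = UJ \hc \beta$, the assumed identity $U(UJ \hc \beta) = U^2 J \hc U\beta$ reduces the required inequality to $U^2 J \hc U\beta \hc \iota_B^* \leq \mu_{A*} \hc UJ \hc \beta$, which follows by composing the right unitor cell for $B$ (using that $B$ is unitary to get $U\beta \hc \iota_B^* \leq \mu_{B*} \hc \beta$) with the naturality cell of $\mu$ on the horizontal morphism $UJ$ (which supplies $U^2 J \hc \mu_{B*} \leq \mu_{A*} \hc UJ$). For $H = \alpha \hc J$, the assumption $U(\alpha \hc J) = U\alpha \hc UJ$ reduces the required inequality to $U\alpha \hc UJ \hc \iota_B^* \leq \mu_{A*} \hc \alpha \hc J$; here one first invokes $U$"/compactness of $J$ to absorb $UJ \hc \iota_B^*$ into $\alpha \hc J$, and then the associator cell for the $(U, \V)$"/category $A$, which supplies $U\alpha \hc \alpha \leq \mu_{A*} \hc \alpha$. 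In particular, $U$"/compactness is essential in the converse of (b), explaining its presence in the hypothesis.
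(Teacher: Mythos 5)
Your proposal is correct and follows essentially the same route as the paper's proof: forward directions via the identity $PJ \hc \eps_B = \eps_A \hc UJ$ of \propref{algebraic morphism}(b), converses via the adjunction unit together with the cancellation \propref{algebraic morphism}(c), whose hypothesis you verify for $H = UJ \hc \beta$ using unitarity of $B$ and for $H = \alpha \hc J$ using $U$"/compactness and the associator of $A$, exactly as in the paper. The only slip is cosmetic: the inequality $U^2J \hc \mu_{B*} \leq \mu_{A*} \hc UJ$ comes from the naturality cell of $\mu$ at $J$, not at $UJ$, though the inequality you display is the correct one.
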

	\begin{proof}
		Part (a). Suppose that $J$ is $U$"/open as a $\V$"/relation between $(U, \V)$"/graphs, that is $\alpha \hc J \leq UJ \hc \beta$. Using \propref{algebraic morphism}(b) we then have
		\begin{displaymath}
			\delta \hc J = \eps_A \hc \alpha \hc J \leq \eps_A \hc UJ \hc \beta = PJ \hc \eps_B \hc \beta = PJ \hc \zeta,
		\end{displaymath}
		showing that $J$ is $P$"/open as a $\V$"/relation between $\V$"/valued preclosure spaces. For the converse assume that $B$ is unitary and that $U(UJ \hc \beta) = U^2J \hc U\beta$. It follows that
		\begin{multline*}
			\bigpars{U(UJ \hc \beta)}(\id, \iota^U_B) = U^2J \hc U\beta \hc \iota^{U*}_B \leq U^2J \hc \beta(\mu^U_B, \id) \\
			= U^2J \hc \mu^U_{B*} \hc \beta \leq \mu^U_{A*} \hc UJ \hc \beta = (UJ \hc \beta)(\mu^U_A, \id),
		\end{multline*}
		where the inequalities follow from $B$ being unitary and from applying \lemref{horizontal cells} to the naturality cell of $\mu$ at $J$. Thus by \propref{algebraic morphism}(c) we have $\eps_A \lhom (\eps_A \hc UJ \hc \beta) = UJ \hc \beta$. Using this, assuming that $J$ is $P$"/open, it follows that
		\begin{multline*}
			\alpha \hc J \overset{\text{(i)}}\leq \eps_A \lhom (\eps_A \hc \alpha \hc J) = \eps_A \lhom (\delta \hc J) \leq \eps_A \lhom (PJ \hc \zeta) \\
			= \eps_A \lhom (PJ \hc \eps_B \hc \beta) \overset{\text{(ii)}}= \eps_A \lhom (\eps_A \hc UJ \hc \beta) = UJ \hc \beta,
		\end{multline*}
		where (i) is given by the unit of $\eps_A \hc \dash \ladj \eps_A \lhom \dash$ and (ii) follows from \propref{algebraic morphism}(b). This shows that $J$ is $U$"/open.
		
		Part (b). Assume that $J$ is $U$"/closed as a $\V$"/relation between $(U, \V)$"/graphs, that is $UJ \hc \beta \leq \alpha \hc J$. Then $J$ is $U$"/compact:
		\begin{displaymath}
			(UJ)(\id, \iota^U_B) = UJ \hc \iota^{U*}_B \leq UJ \hc \beta \leq \alpha \hc J,
		\end{displaymath}
		where the first inequality follows from applying \lemref{horizontal cells} to the unit cell of $\beta$. That $J$ is $P$"/closed as a $\V$"/relation between the $\V$"/valued preclosure spaces $(A, \delta)$ and $(B, \zeta)$ is shown by
		\begin{displaymath}
			PJ \hc \zeta = PJ \hc \eps_B \hc \beta = \eps_A \hc UJ \hc \beta \leq \eps_A \hc \alpha \hc J = \delta \hc J
		\end{displaymath}
		where the second identity follows from \propref{algebraic morphism}(b). For the converse assume that $J$ is $U$"/compact and $P$"/closed while $A$ is a $(U, \V)$"/category and $U(\alpha \hc J) = U\alpha \hc UJ$. Using $U$"/compactness of $J$ and the associativity axiom for $A$ it follows that
		\begin{displaymath}
			U(\alpha \hc J)(\id, \iota^U_B) = U\alpha \hc UJ \hc \iota^{U*}_B \leq U\alpha \hc \alpha \hc J \leq \alpha(\mu_A^U,\id) \hc J = (\alpha \hc J)(\mu_A^U, \id),
		\end{displaymath}
		so that $\eps_A \lhom (\eps_A \hc \alpha \hc J) = \alpha \hc J$ by \propref{algebraic morphism}(c). We conclude that
		\begin{multline*}
			UJ \hc \beta \leq \eps_A \lhom (\eps_A \hc UJ \hc \beta) = \eps_A \lhom (PJ \hc \eps_B \hc \beta) \\
			= \eps_A \lhom (PJ \hc \zeta) \leq \eps_A \lhom (\delta \hc J) = \eps_A \lhom (\eps_A \hc \alpha \hc J) = \alpha \hc J,
		\end{multline*}
		showing that $J$ is $U$"/closed.
	\end{proof}
	
	\begin{example} \label{hemi-continuous relation}
		Let us choose $\V = \2$ in the previous theorem, so that it applies to a relation $\hmap JAB$ between topological spaces. Combined with the descriptions of $P$"/open and $P$"/closed relations between topological spaces given in \exref{open/closed relations}, as well as the description of $U$"/compact relations following \defref{U-compact}, we find that $J$ is $U$"/open precisely if it is \emph{lower hemi"/continuous} in the classical sense, see e.g.\ Section~VI.1 of \cite{Berge59}, while $J$ is $U$"/closed precisely if its reverse \mbox{$\hmap{\rev J}BA$} is \emph{upper hemi"/continuous}. More precisely, the notions of $U$"/open and $U$"/closed relation describe lower/upper hemi"/continuity in terms of ultrafilter convergence. For a closely related description of hemi"/continuity in terms of nets see Section~17.3 of \cite{Aliprantis-Border06}.
	\end{example}
	
	In the definition below the notions of openness and closedness are extended to vertical morphisms. In the case that $T$ is a lax monad on $\enRel\V$ this recovers the notions of `open' and `proper' morphism between $(T, \V)$"/categories, as studied in Section~V.3 of \cite{Hofmann-Seal-Tholen14}, although there $T$ is not required to be normal.
	\begin{definition} \label{open closed vertical morphism}
		Let $T$ be a normal lax monad on a thin equipment $\K$. A morphism $\map fAC$ of $T$"/graphs $A = (A, \alpha)$ and $C = (C, \gamma)$ is called
		\begin{enumerate}[label=-]
			\item \emph{$T$"/open} if its conjoint $\hmap{f^*}CA$ is $T$"/open, that is $\gamma(\id, f) \leq (Tf)^* \hc \alpha$;
			\item \emph{$T$"/closed} if its companion $\hmap{f_*}AC$ is $T$"/closed, that is $\gamma(Tf, \id) \leq \alpha \hc f_*$.
		\end{enumerate}
	\end{definition}
	We remark that, in rewriting the inequalities of \defref{open closed horizontal morphism} into those above, we use the fact that $T$ is normal, so that it preserves companions and conjoints. We shall only describe open and closed morphisms in $\ModCat{(T, \V)}$ (see \secref{modular T-graphs}) where either $T = P$ is the powerset monad or $T = U$ is the ultrafilter monad. For a description of open and closed morphisms in $\enCat{(U, \V)}$ we refer to Section~V.3.4 of \cite{Hofmann-Seal-Tholen14}.
	\begin{example} \label{open/closed maps between modular V-valued closure spaces}
		Let $P$ be the powerset monad on $\enProf\V$. A morphism $\map fAC$ of modular $\V$"/valued closure spaces $A = (A, \bar A, \delta)$ and $C = (C, \bar C, \zeta)$ is $P$"/open if
		\begin{displaymath}
			\zeta(T, fx) \leq \sup_{S \in PA} \bigpars{\inf_{s \in S} \sup_{t \in T} C(t, fs)} \tens \delta(S, x)
		\end{displaymath}
		for all $T \in PC$ and $x \in A$; dually $f$ is $P$"/closed if
		\begin{displaymath}
			\zeta(fS, z) \leq \sup_{x \in A} \delta(S, x) \tens C(fx, z)
		\end{displaymath}
		for all $S \in PA$ and $z \in C$. If $\V$ is completely distributive so that the ultrafilter $U$ extends to $\enProf\V$ as well (see \exref{ultrafilter monad}) then, by \thmref{P and U horizontal morphism}, a morphism $\map fAC$ of modular $\V$"/valued topological spaces is $U$"/open precisely if it is $P$"/open, while it is $U$"/closed precisely when it is $P$"/closed and its companion $\hmap{f_*}AC$ is $U$"/compact.
	\end{example}
	\begin{example} \label{open/closed maps between ordered topological spaces}
		Taking $\V = \2$ in the previous example, a monotone continuous map $\map fAC$ between modular topological spaces (\exref{modular (U, V)-category}) is $U$"/open if $\downset fO \subseteq C$ is open for all $O \subseteq A$ open; it is $U$"/closed whenever it is $P$"/closed, that is $\upset fV \subseteq C$ is closed for all $V \subseteq A$ closed, while $\inv f(\downset z) \subseteq A$ is compact for all $z \in C$.
		
		In case $C = \brks{-\infty, \infty}$ is equipped with the Scott topology with respect to $\geq$ (\exref{Scott topology}) then the first two of the conditions above weaken the classical notions of open and closed maps $\map fA{\brks{-\infty, \infty}}$: $f$ is $U$"/open means \mbox{$fO \subseteq \brks{-\infty, \infty}$} does not have a minimum, for all $O \subseteq A$ open, while $f$ is $P$"/closed means $fV \subseteq \brks{-\infty, \infty}$ has a maximum, for all closed $V \subseteq A$. The third condition above means that $\inv f(\brks{z, \infty}) \subseteq A$ is compact for all $z \in \brks{-\infty, \infty}$; functions $\map fA{\brks{-\infty, \infty}}$ with this property are called \emph{upper semi"/compact} or \emph{sup"/compact}, see e.g.\ Section~1 of \cite{Feinberg-Kasyanov15}.
	\end{example}
	
	We close this section with a couple of remarks.
	\begin{remark}
		Let $T$ be a lax monad on a thin equipment $\K$. Notice that the horizontal composite $J \hc H$ of $T$"/open horizontal morphisms $\hmap JAB$ and \mbox{$\hmap HBE$} is again $T$"/open. In fact $T$"/graphs, $T$"/morphisms, $T$"/open horizontal morphisms and the cells between them in $\K$ form a `thin double category' $T\text-\mathsf{Opn}$. While $T\text-\mathsf{Opn}$ has all companions $f_*$, the conjoint $f^*$ of a $T$"/morphism $f$ will in general not be $T$"/open, but $T$"/closed instead. If the monad $T$ preserves horizontal composition strictly then we are able to compose $T$"/closed horizontal morphisms as well, so that they form the horizontal morphisms of a thin double category $T\text-\mathsf{Cls}$.
	\end{remark}
	\begin{remark}
		Let $T$ be a lax monad on $\enRel\V$. Weakening the notion of modular $(T, \V)$"/category considered in our \secref{modular T-graphs}, in Section~5 of \cite{Tholen09} an `open $\V$"/structured $(T, \V)$"/category' $A$ is defined to be a $(T, \V)$"/category $(A, \alpha)$ equipped with a $\V$"/category structure $\hmap{\bar A}AA$ that is $T$"/open in our sense. Similarly `closed $\V$"/structured $(T, \V)$"/categories' $A$ are defined to be triples $(A, \bar A, \alpha)$ with $(A, \alpha)$ a $(T, \V)$"/category and $(A, \bar A)$ a $\V$"/category, such that $\bar A$ is $T$"/closed with respect to $\alpha$ and $T(\alpha \hc \bar A) = T\alpha \hc T\bar A$.
	\end{remark}
	
	\section{Generalisations of the maximum theorem} \label{maximum theorem}
	We are now ready to state and prove generalisations of the maximum theorem for Kan extensions of $T$"/morphisms between $T$"/graphs. Starting with right Kan extensions the first of these generalisations, \thmref{maximum theorem for right Kan extensions into a complete graph} below, assumes that the target of the Kan extension is $T$"/cocomplete (\defref{complete}), while \thmref{maximum theorem for right Kan extensions satisfying Beck-Chevalley} instead assumes a Kan extension that satisfies the Beck"/Chevalley condition (\thmref{Beck-Chevalley}). Similarly left Kan extensions are considered in \thmref{maximum theorem for left Kan extensions into a T-complete T-graph} and \thmref{maximum theorem for left Kan extensions satisfying Beck-Chevalley}.
	\begin{theorem} \label{maximum theorem for right Kan extensions into a complete graph}
		Let $T$ be a normal lax monad on a thin equipment $\K$. Let $\hmap JAB$ be a $T$"/open horizontal morphism between $T$"/graphs and $\map eBM$ a $T$"/morphism into a $T$"/cocomplete $T$"/graph $M$. The right Kan extension $\map rAM$ of $e$ along $J$ in $\K$, if it exists, is a $T$"/morphism.
	\end{theorem}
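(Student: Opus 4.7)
To show that the right Kan extension $r$ is a $T$"/morphism, by \lemref{horizontal cells} it suffices to establish the inequality
\[ (Tr)^* \hc \alpha \leq \mu \hc r^*, \]
where $\alpha$ and $\mu$ denote the $T$"/graph structures on $A$ and $M$. The hypothesis that $M$ is $T$"/cocomplete provides a vertical morphism $\map m{TM}M$ with $\mu = m_*$, so that the right"/hand side takes the form $m_* \hc r^* = 1_M(m, r)$. This is precisely the shape needed to invoke the universal property of the right Kan extension, since the target cell to be exhibited now has bottom boundary $1_M$ with $m$ on the left.

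The plan is to apply the universal property of $\eps$ with $H \dfn (Tr)^* \hc \alpha$ playing the role of the horizontal morphism and $m$ playing the role of the vertical morphism into $M$, thereby reducing the problem to verifying
\[ (Tr)^* \hc \alpha \hc J \leq m_* \hc e^*. \]
This inequality I would prove by concatenating three facts. First, $T$"/openness of $J$ gives $\alpha \hc J \leq TJ \hc \beta$. Second, applying the lax functor $T$ to the defining cell $\eps$ of $r$ yields a cell with top $TJ$, left $Tr$, right $Te$, and bottom $T1_M$; normality of $T$ identifies $T1_M$ with $1_{TM}$ and preserves the conjoints $r^*$ and $e^*$, so \lemref{horizontal cells} delivers $(Tr)^* \hc TJ \leq (Te)^*$. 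Third, the $T$"/morphism property of $e$ gives, again by \lemref{horizontal cells}, $(Te)^* \hc \beta \leq \mu \hc e^* = m_* \hc e^*$. Concatenation yields the required inequality.

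The main subtlety, and the place where the $T$"/cocompleteness hypothesis is essential, lies in the reduction step. The universal property of the right Kan extension only factors cells whose lower boundary is the horizontal unit $1_M$, so without being able to rewrite $\mu$ in the form $m_*$ there is no way to present $\mu \hc r^*$ as a restriction along $1_M$ and hence no way to bring the universal property to bear on $(Tr)^* \hc \alpha$. Normality of $T$ plays a secondary but essential role, since it guarantees both that $T$ preserves the unit boundary of $\eps$ and that it commutes with the formation of the conjoints needed to translate the $T$"/image of $\eps$ into the inequality $(Tr)^* \hc TJ \leq (Te)^*$.
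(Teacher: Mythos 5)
Your proposal is correct and is essentially the paper's own proof transcribed from pasted cells into the inequality calculus of \lemref{horizontal cells}: the same ingredients ($T$"/openness of $J$, the $T$"/image of $\eps$ with normality giving $T1_M = 1_{TM}$, the structure cell of $e$, and $T$"/cocompleteness rewriting the structure of $M$ as $m_*$) are assembled in the same order and fed once through the universal property of $\eps$. The paper performs the identical reduction diagrammatically, factoring the pasted composite through $\eps$ and then composing with the companion"/defining cell of $m$, which corresponds exactly to your chain $(Tr)^* \hc \alpha \hc J \leq (Te)^* \hc \beta \leq m_* \hc e^*$.
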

	\begin{proof}
		We write $\hmap\alpha{TA}A$, $\hmap\beta{TB}B$ and $\hmap{m_*}{TM}M$ for the horizontal structure morphisms of $A$, $B$ and $M$ respectively; because $M$ is $T$"/cocomplete the last of these is the companion of a vertical morphism $\map m{TM}M$. Consider the composite of cells on the left"/hand side below, where $T\eps$ denotes the `$T$"/image' of the cell $\eps$ definining $r$ and where the cells denoted $\leq$, from top to bottom, exist because $J$ is $T$"/open, $e$ is a $T$"/morphism, and $m_*$ is the companion of $m$.
		\begin{displaymath}
			\begin{tikzpicture}[textbaseline]
				\matrix(m)[math35]{TA & A & B \\ TA & TB & B \\ TM & TM & M \\ M & M & M \\};
				\path[map]	(m-1-1) edge[barred] node[above] {$\alpha$} (m-1-2)
										(m-1-2) edge[barred] node[above] {$J$} (m-1-3)
										(m-2-1) edge[barred] node[above] {$TJ$} (m-2-2)
														edge node[left] {$Tr$} (m-3-1)
										(m-2-2) edge[barred] node[above] {$\beta$} (m-2-3)
														edge node[right, inner sep=1.5pt] {$Te$} (m-3-2)
										(m-2-3) edge node[right] {$e$} (m-3-3)
										(m-3-1) edge node[left] {$m$} (m-4-1)
										(m-3-2) edge[barred] node[below] {$m_*$} (m-3-3)
														edge node[left] {$m$} (m-4-2);
				\path				(m-1-1) edge[eq] (m-2-1)
										(m-1-3) edge[eq] (m-2-3)
										(m-3-1) edge[eq] (m-3-2)
										(m-3-3) edge[eq] (m-4-3)
										(m-4-1) edge[eq] (m-4-2)
										(m-4-2) edge[eq] (m-4-3);
				\draw				($(m-1-1)!0.5!(m-2-3)$) node[rotate=-90] {$\leq$}
										($(m-2-1)!0.5!(m-3-2)$) node {$T\eps$}
										($(m-2-2)!0.5!(m-3-3)$) node[rotate=-90] {$\leq$}
										($(m-3-2)!0.5!(m-4-3)$) node[rotate=-90] {$\leq$};
			\end{tikzpicture} = \begin{tikzpicture}[textbaseline]
				\matrix(m)[math35]{TA & A & B \\ TM & & \\ M & M & M \\};
				\path[map]	(m-1-1) edge[barred] node[above] {$\alpha$} (m-1-2)
														edge node[left] {$Tr$} (m-2-1)
										(m-1-2) edge[barred] node[above] {$J$} (m-1-3)
														edge node[right] {$r$} (m-3-2)
										(m-1-3) edge node[right] {$e$} (m-3-3)
										(m-2-1) edge node[left] {$m$} (m-3-1);
				\path				(m-3-1) edge[eq] (m-3-2)
										(m-3-2) edge[eq] (m-3-3);
				\draw				($(m-1-1)!0.5!(m-3-2)$) node[rotate=-90] {$\leq$}
										($(m-1-2)!0.5!(m-3-3)$) node {$\eps$};
			\end{tikzpicture}
		\end{displaymath}
		By the universal property of $\eps$ the composite on the left factors as shown. Composing this factorisation with the appropriate cell among the pair of cells that defines $m_*$, we obtain the cell that exhibits $r$ as a $T$"/morphism.
	\end{proof}
	
	\begin{example}
		If $T = P$ is the powerset monad on the thin equipment $\K = \enProf\2$ of modular relations (\exref{modular relation}), so that $A$, $B$ and $M$ in the theorem above are modular preclosure spaces (\exref{modular closure space}), then the categorical proof above reduces to the following elementary proof. As $M$ is assumed to be $P$"/cocomplete its closed subsets are principal upsets $\upset z$ (\exref{complete modular closure space}), so that for the continuity of $r$ it suffices to show that $\inv r(\upset z)$ is closed for all $\upset z \subseteq M$ closed.
		
		To see this first notice that from the definition $rx = \inf_{y \in Jx} ey$ of $r$ (\exref{Kan extensions in ordered sets}) it follows that
		\begin{displaymath}
			S \subseteq \inv r(\upset z) \quad \iff \quad JS \subseteq \inv e(\upset z)
		\end{displaymath}
		for any $S \subseteq A$. Using this we find
		\begin{align*}
			S \subseteq \inv r(\upset z) \quad &\iff \quad JS \subseteq \inv e(\upset z) \\
			&\iff \quad \overline{JS} \subseteq \inv e(\upset z) && \text{(because $e$ is continuous)} \\
			&\implies \quad J\bar S \subseteq \inv e(\upset z) && \text{(because $J$ is $P$"/open; see \propref{P-open})} \\
			&\iff \quad \bar S \subseteq \inv r(\upset z),
		\end{align*}
		so that taking $S = \inv r(\upset z)$ here proves the continuity of $r$.
	\end{example}
	
	\begin{example} \label{right Kan extension into extended real line}
		Consider topological spaces $A$ and $B$ and let $\hmap JAB$ be a lower hemi"/continuous relation, that is $J$ is $P$"/open in our sense (see \exref{open/closed relations} and \exref{hemi-continuous relation}). Let $\map eB{\brks{-\infty, \infty}}$ be a lower semi"/continuous map, that is $e$ is continuous with respect to the Scott topology on $\brks{-\infty, \infty}$ with the reverse order $\geq$ (see \exref{Scott topology}). Regarding $A$ and $B$ as topological spaces with discrete orders, the previous theorem asserts that the right Kan extension $\map rA{\brks{-\infty, \infty}}$ of $e$ along $J$, given by
		\begin{displaymath}
			rx = \sup_{y \in Jx} ey
		\end{displaymath}
		for all $x \in A$, is lower semi"/continuous. This recovers partly Theorem~1 of Section~VI.3 of \cite{Berge59} (or Lemma~17.29 of \cite{Aliprantis-Border06}), where more general maps of the form \mbox{$\map e{A \times B}{\brks{-\infty, \infty}}$} are treated; see the final remark of the Introduction.
	\end{example}
	
	\begin{example}
		Recall from \exref{Lawvere quantale as a modular approach space} that the canonical normalised modular approach space structure on the Lawvere quantale $\brks{0, \infty}$ is given by the point"/set distance
		\begin{displaymath}
			\delta_{\sup}(S, x) = \begin{cases}
				x \ominus (\sup S) & \text{if $S \neq \emptyset$;} \\
				\infty & \text{otherwise,}
			\end{cases}
		\end{displaymath}
		where $\ominus$ denotes truncated difference. Consider a metric relation $\hmap JAB$ between approach spaces that is $P$"/open (\exref{open/closed relations}) or, equivalently by \thmref{P and U horizontal morphism}, $U$"/open, as well as a continuous map \mbox{$\map eB{\brks{0, \infty}}$}. Regarding $A$ and $B$ as modular approach spaces with discrete metrics, the previous theorem asserts that the right Kan extension $\map rA{\brks{0, \infty}}$ of $e$ along $J$, given by the suprema
		\begin{displaymath}
			rx = \sup_{y \in B} ey \ominus J(x, y)
		\end{displaymath}
		for all $x \in A$ (see \propref{Kan extension expression}), is continuous. The previous theorem likewise applies to right Kan extensions into $\brks{0, \infty}$ equipped with the `reversed' point"/set distance $\delta_{\inf}$ of \exref{Lawvere quantale as a modular approach space}.
	\end{example}	
	\begin{example}
		Analogous to the previous example, the above theorem applies to right Kan extensions of continuous maps $\map eB{\Delta_{\tn}}$ of modular probabilistic approach spaces. Here $\Delta_{\tn}$ is the space of distribution functions (\exref{Delta}), equipped with either of the probabilistic approach space structures that are decribed in \exref{V as a modular approach space} for $\V = \Delta_{\tn}$.
	\end{example}
	
	\begin{theorem}\label{maximum theorem for right Kan extensions satisfying Beck-Chevalley}
		Let $T$ be a normal lax monad on a thin equipment $\K$. Let $A$, $B$ and $M$ be $T$"/graphs, $\map eBM$ a $T$"/morphism and $\hmap JAB$ a $T$"/open horizontal morphism. The right Kan extension $\map rAM$ of $e$ along $J$ in $\K$, if it exists, is a $T$"/morphism whenever it satisfies the Beck"/Chevalley condition.
		
		Moreover, in that case $r$ is $T$"/closed as soon as both $e$ and $J$ are $T$"/closed, provided that $TJ \hc Te_* = T(J \hc e_*)$.
	\end{theorem}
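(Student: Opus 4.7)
The overall approach is to translate the Beck"/Chevalley condition for the right Kan extension into the algebraic identity $r_* = J \hc e_*$, via the horizontal dual of \thmref{Beck-Chevalley}, and then to derive both assertions through short formal chains of inequalities using $T$"/openness (or $T$"/closedness) of $J$, the $T$"/morphism (or $T$"/closed) property of $e$, and (normal) lax functoriality of $T$. Two standard consequences of the hypotheses will be used throughout: normality of $T$ gives $T(f_*) = (Tf)_*$ for every vertical morphism $f$, while lax functoriality gives $TJ \hc T(e_*) \leq T(J \hc e_*)$.

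For the first assertion, by \lemref{horizontal cells} it suffices to verify the inequality $\alpha \hc r_* \leq (Tr)_* \hc \gamma$, where $\alpha$, $\beta$ and $\gamma$ denote the structure morphisms of $A$, $B$ and $M$. Substituting $r_* = J \hc e_*$ and then applying in succession the $T$"/openness of $J$, the $T$"/morphism condition for $e$ in the equivalent form $\beta \hc e_* \leq (Te)_* \hc \gamma$ obtained from \lemref{horizontal cells}, and finally lax functoriality combined with normality of $T$, gives the chain
\begin{displaymath}
\alpha \hc r_* = \alpha \hc J \hc e_* \leq TJ \hc \beta \hc e_* \leq TJ \hc (Te)_* \hc \gamma \leq T(J \hc e_*) \hc \gamma = (Tr)_* \hc \gamma.
\end{displaymath}

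For the second assertion the reverse inequality $(Tr)_* \hc \gamma \leq \alpha \hc r_*$ is required, so the lax comparison $TJ \hc T(e_*) \leq T(J \hc e_*)$ is no longer enough. This is precisely where the extra hypothesis $TJ \hc Te_* = T(J \hc e_*)$ enters: it upgrades that inequality to an equality, allowing the rewriting $(Tr)_* = T(J \hc e_*) = TJ \hc (Te)_*$. Then the $T$"/closedness of $e$ (in the equivalent form $(Te)_* \hc \gamma \leq \beta \hc e_*$) and of $J$ assemble to give
\begin{displaymath}
(Tr)_* \hc \gamma = TJ \hc (Te)_* \hc \gamma \leq TJ \hc \beta \hc e_* \leq \alpha \hc J \hc e_* = \alpha \hc r_*.
\end{displaymath}
The only delicate point in the whole argument is the initial translation of the Beck"/Chevalley condition into the identity $r_* = J \hc e_*$; once that translation is in hand, the remainder is a purely formal manipulation and I anticipate no substantive obstacle.
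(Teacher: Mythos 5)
Your proposal is correct and takes essentially the same route as the paper: your treatment of the second assertion is verbatim the paper's chain of (in)equalities, and your equational proof of the first assertion --- translating the Beck"/Chevalley condition into $r_* = J \hc e_*$ and verifying $\alpha \hc r_* \leq (Tr)_* \hc \gamma$ via \lemref{horizontal cells} --- is just the inequality-level rendering of the paper's factorisation of a composite cell through the universal cell $\eps$, which is available by the horizontal dual of \thmref{Beck-Chevalley}. Both arguments use the same ingredients in the same order ($T$"/openness of $J$, the $T$"/morphism cell for $e$, normality plus lax functoriality of $T$), and you correctly isolate that the first assertion needs only the lax comparison $TJ \hc Te_* \leq T(J \hc e_*)$ while the second requires the assumed equality.
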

	\begin{proof}
		We write $\hmap\alpha{TA}A$, $\hmap\beta{TB}B$ and $\hmap\nu{TM}M$ for the horizontal structure morphisms of $A$, $B$ and $M$. Consider the composite on the left"/hand side below, where $T\eps$ denotes the `$T$"/image' of the universal cell $\eps$ that defines $r$ and where the other two cells exhibit $J$ as a $T$"/open horizontal morphism and $e$ as a $T$"/morphism respectively.
		\begin{displaymath}
			\begin{tikzpicture}[textbaseline]
				\matrix(m)[math35]{TA & A & B \\ TA & TB & B \\ TM & TM & M \\};
				\path[map]	(m-1-1) edge[barred] node[above] {$\alpha$} (m-1-2)
										(m-1-2) edge[barred] node[above] {$J$} (m-1-3)
										(m-2-1) edge[barred] node[above] {$TJ$} (m-2-2)
														edge node[left] {$Tr$} (m-3-1)
										(m-2-2) edge[barred] node[above] {$\beta$} (m-2-3)
														edge node[right, inner sep=1.5pt] {$Te$} (m-3-2)
										(m-2-3) edge node[right] {$e$} (m-3-3)
										(m-3-2) edge[barred] node[below] {$\nu$} (m-3-3);
				\path				(m-1-1) edge[eq] (m-2-1)
										(m-1-3) edge[eq] (m-2-3)
										(m-3-1) edge[eq] (m-3-2);
				\draw				($(m-1-1)!0.5!(m-2-3)$) node[rotate=-90] {$\leq$}
										($(m-2-1)!0.5!(m-3-2)$) node {$T\eps$}
										($(m-2-2)!0.5!(m-3-3)$) node[rotate=-90] {$\leq$};
			\end{tikzpicture} = \begin{tikzpicture}[textbaseline]
				\matrix(m)[math35]{TA & A & B \\ TM & M & M \\};
				\path[map]	(m-1-1) edge[barred] node[above] {$\alpha$} (m-1-2)
														edge node[left] {$Tr$} (m-2-1)
										(m-1-2) edge[barred] node[above] {$J$} (m-1-3)
														edge node[right] {$r$} (m-2-2)
										(m-1-3) edge node[right] {$e$} (m-2-3)
										(m-2-1) edge[barred] node[below] {$\nu$} (m-2-2);
				\path				(m-2-2) edge[eq] (m-2-3);
				\draw				($(m-1-1)!0.5!(m-2-2)$) node[rotate=-90] {$\leq$}
										($(m-1-2)!0.5!(m-2-3)$) node {$\eps$};
			\end{tikzpicture}
		\end{displaymath}
		By assumption $r$ satisfies the Beck"/Chevalley condition so that, by the horizontal dual of \thmref{Beck-Chevalley}, the composite factors through $\eps$ as shown. This factorisation exhibits $r$ as a $T$"/morphism.
		
		Now assume that both $e$ and $J$ are $T$"/closed and that $T$ preserves the horizontal composite $J \hc e_*$. That $r$ is $T$"/closed is shown by
		\begin{multline*}
			\nu(Tr, \id) = Tr_* \hc \nu \overset{\text{(i)}}= T(J \hc e_*) \hc \nu = TJ \hc Te_* \hc \nu\\
			\overset{\text{(ii)}}\leq TJ \hc \beta \hc e_* \overset{\text{(iii)}}\leq \alpha \hc J \hc e_* \overset{\text{(i)}}= \alpha \hc r_*,
		\end{multline*}
		where the equalities marked (i) follow from the Beck"/Chevalley condition for $r$ while the inequalities marked (ii) and (iii) follow from $e$ and $J$ being $T$"/closed respectively.
	\end{proof}
	
	\begin{example}
		In the setting of \exref{right Kan extension into extended real line} assume that the relation $\hmap JAB$, besides being lower hemi"/continuous, is upper hemi"/continuous (see \exref{hemi-continuous relation}). Also assume that the right Kan extension $r$ of $e$ along $J$ satisfies the Beck"/Chevalley condition, that is the suprema defining $r$ are attained as maxima (\exref{Beck-Chevalley for ordered sets}). The second assertion of the previous theorem states that $r$ is $P$"/closed and upper semi"/compact (\exref{open/closed maps between ordered topological spaces}) whenever the map $e$ is. 
	\end{example}
	
	Next we turn to generalisations of the maximum theorem for left Kan extensions between $T$"/graphs.
	\begin{theorem} \label{maximum theorem for left Kan extensions into a T-complete T-graph}
		Let $T$ be a normal lax monad on a thin equipment $\K$. Let \mbox{$\hmap JAB$} be a $T$"/closed horizontal morphism between $T$"/graphs and let $\map dAM$ be a $T$"/morphism into a $T$"/cocomplete $T$"/graph $M = (M, m_*)$, where $\map m{TM}M$ (see \defref{complete}). The left Kan extension $\map lBM$ of $d$ along $J$ in $\K$, if it exists, is a $T$"/morphism whenever $m \of Tl$ is the left Kan extension of $m \of Td$ along $TJ$.
	\end{theorem}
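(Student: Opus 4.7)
The plan is to dualise the proof of \thmref{maximum theorem for right Kan extensions into a complete graph}, with the role played there by the $T$"/image $T\eps$ of the defining cell of a right Kan extension now taken on by the universal property of the cell exhibiting $m \of Tl$ as a left Kan extension, which is precisely the extra hypothesis of the theorem.

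First I would assemble in $\K$ a single composite cell whose outer boundary has $TJ \hc \beta$ on top, $m \of Td$ on the left, $l$ on the right, and $1_M$ on the bottom. It is built from three pieces: the horizontal cell $TJ \hc \beta \leq \alpha \hc J$ provided by the $T$"/closedness of $J$; the vertical composite of the $T$"/morphism cell for $d$ (whose bottom is $m_*$, since $M$ is $T$"/cocomplete with structure $m_*$) with the companion cell of $m$ of shape top $m_*$, left $m$, right $\id$, bottom $1_M$, producing a cell with top $\alpha$, left $m \of Td$, right $d$, bottom $1_M$; and the defining cell $\eta$ of $l$ as the left Kan extension of $d$ along $J$. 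The last two pieces glue horizontally along their common leg $d$, and the result stacks vertically beneath the first piece.

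Next I would apply the universal property of $m \of Tl$ as the left Kan extension of $m \of Td$ along $TJ$, with the role of the extending morphism taken by $\beta$ and that of the target by $l$. This factors the composite into a cell of boundary top $\beta$, left $m \of Tl$, right $l$, bottom $1_M$. By \lemref{horizontal cells} this cell is equivalent to the horizontal inequality $\beta \hc l_* \leq (m \of Tl)_*$; since $(\dash)_*$ is functorial we have $(m \of Tl)_* = Tl_* \hc m_*$, so the inequality reads $\beta \hc l_* \leq Tl_* \hc m_*$, and a second application of \lemref{horizontal cells} translates it into the cell with top $\beta$, left $Tl$, right $l$, bottom $m_*$, which is precisely the cell exhibiting $l$ as a $T$"/morphism.

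I expect no substantial obstacle, as the argument is a formal dualisation of the right Kan extension proof. The only subtlety worth flagging is that the cell produced by the universal factorisation naturally has $m \of Tl$ on its left edge rather than $Tl$; this is reconciled by passing through \lemref{horizontal cells} and using the functoriality of the companion assignment.
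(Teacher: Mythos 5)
Your proposal is correct and follows essentially the same route as the paper's proof: you assemble the identical composite cell (closedness of $J$, the $T$"/morphism cell of $d$ combined with the companion cell of $m$, and $\eta$) and factor it through the hypothesised left Kan extension $m \of Tl$ exactly as the paper does. Your final step, converting the factorisation into the $T$"/morphism cell for $l$ via \lemref{horizontal cells} and the functoriality $(m \of Tl)_* = (Tl)_* \hc m_*$, is just an explicit rendering of the paper's phrase ``composing this factorisation with the appropriate cell among the pair of cells that defines $m_*$''.
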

	\begin{proof}
		Writing $\alpha$ and $\beta$ for the $T$"/graph structures of $A$ and $B$, consider on the left"/hand side below the composite of the cells exhibiting $J$ as a $T$"/closed horizontal morphism, $d$ as a $T$"/morphism, $l$ as a left Kan extension and $m_*$ as a companion.
		\begin{displaymath}
			\begin{tikzpicture}[textbaseline]
				\matrix(m)[math35]{TA & TB & B \\ TA & A & B \\ TM & M & M \\ M & M & M \\};
				\path[map]	(m-1-1) edge[barred] node[above] {$TJ$} (m-1-2)
										(m-1-2) edge[barred] node[above] {$\beta$} (m-1-3)
										(m-2-1) edge[barred] node[above] {$\alpha$} (m-2-2)
														edge node[left] {$Td$} (m-3-1)
										(m-2-2) edge[barred] node[above] {$J$} (m-2-3)
														edge node[right, inner sep=1.5pt] {$d$} (m-3-2)
										(m-2-3) edge node[right] {$l$} (m-3-3)
										(m-3-1) edge[barred] node[below] {$m_*$} (m-3-2)
														edge node[left] {$m$} (m-4-1);
				\path				(m-1-1) edge[eq] (m-2-1)
										(m-1-3) edge[eq] (m-2-3)
										(m-3-2) edge[eq] (m-3-3)
														edge[eq] (m-4-2)
										(m-3-3) edge[eq] (m-4-3)
										(m-4-1) edge[eq] (m-4-2)
										(m-4-2) edge[eq] (m-4-3);
				\draw				($(m-1-1)!0.5!(m-2-3)$) node[rotate=-90] {$\leq$}
										($(m-2-2)!0.5!(m-3-3)$) node {$\eta$}
										($(m-2-1)!0.5!(m-3-2)$) node[rotate=-90] {$\leq$}
										($(m-3-1)!0.5!(m-4-2)$) node[rotate=-90] {$\leq$};
			\end{tikzpicture} = \begin{tikzpicture}[textbaseline]
				\matrix(m)[math35]{TA & TB & B \\ TM & TM & \\ M & M & M \\};
				\path[map]	(m-1-1) edge[barred] node[above] {$TJ$} (m-1-2)
														edge node[left] {$Td$} (m-2-1)
										(m-1-2) edge[barred] node[above] {$\beta$} (m-1-3)
														edge node[right] {$Tl$} (m-2-2)
										(m-1-3) edge node[right] {$l$} (m-3-3)
										(m-2-1) edge node[left] {$m$} (m-3-1)
										(m-2-2) edge node[left] {$m$} (m-3-2);
				\path				(m-2-1) edge[eq] (m-2-2)
										(m-3-1) edge[eq] (m-3-2)
										(m-3-2) edge[eq] (m-3-3);
				\draw				($(m-1-2)!0.5!(m-3-3)$) node[rotate=-90] {$\leq$}
										($(m-1-1)!0.5!(m-2-2)$) node {$T\eta$};
			\end{tikzpicture}
		\end{displaymath}
		By assumption the first column in the right"/hand side above defines $m \of Tl$ as a left Kan extension so that the left"/hand side factors as shown. Composing this factorisation with the appropriate cell among the pair of cells that defines $m_*$, we obtain the cell that exhibits $l$ as a $T$"/morphism.
	\end{proof}
	\begin{theorem} \label{maximum theorem for left Kan extensions satisfying Beck-Chevalley}
		Let $T$ be a normal lax monad on a thin equipment $\K$. Let $A$, $B$ and $M$ be $T$"/graphs, $\map dAM$ a $T$"/morphism and $\hmap JAB$ a $T$"/closed horizontal morphism. The left Kan extension $\map lBM$ of $d$ along $J$ in $\K$, if it exists, is a $T$"/morphism whenever it satisfies the Beck"/Chevalley condition and $Td^* \hc TJ = T(d^* \hc J)$.
		
		Moreover, in that case $l$ is $T$"/open as soon as both $d$ and $J$ are $T$"/open.
	\end{theorem}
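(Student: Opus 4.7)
The plan is to mirror horizontally the proof of \thmref{maximum theorem for right Kan extensions satisfying Beck-Chevalley}. Writing $\hmap\alpha{TA}A$, $\hmap\beta{TB}B$ and $\hmap\nu{TM}M$ for the structure morphisms of $A$, $B$ and $M$, the first step is to extract from the hypotheses the key identity $(Tl)^* = Td^* \hc TJ$. This follows from the Beck"/Chevalley condition $l^* = d^* \hc J$ (\thmref{Beck-Chevalley}), the normality of $T$ (which yields $(Tl)^* = T(l^*)$), and the assumption $Td^* \hc TJ = T(d^* \hc J)$.

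To show that $l$ is a $T$"/morphism, I would use \lemref{horizontal cells} to reformulate the $T$"/morphism property as the inequality $(Tl)^* \hc \beta \leq \nu \hc l^*$. After substituting the two identities above, this reduces to $Td^* \hc TJ \hc \beta \leq \nu \hc d^* \hc J$, which I would then verify by chaining the $T$"/closedness of $J$ (that is, $TJ \hc \beta \leq \alpha \hc J$) with the inequality $(Td)^* \hc \alpha \leq \nu \hc d^*$ that expresses $d$ being a $T$"/morphism (again via \lemref{horizontal cells}).

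For the moreover assertion, by \defref{open closed vertical morphism} $T$"/openness of $l$ amounts to $\nu \hc l^* \leq (Tl)^* \hc \beta$. The same substitutions reduce this to $\nu \hc d^* \hc J \leq Td^* \hc TJ \hc \beta$, which I would obtain by chaining the $T$"/openness of $d$, namely $\nu \hc d^* \leq (Td)^* \hc \alpha$, with the $T$"/openness of $J$, that is $\alpha \hc J \leq TJ \hc \beta$.

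The main obstacle is establishing the identity $(Tl)^* = Td^* \hc TJ$ from the first paragraph. Without the hypothesis $Td^* \hc TJ = T(d^* \hc J)$, lax functoriality supplies only $Td^* \hc TJ \leq T(l^*) = (Tl)^*$, which is the wrong direction to substitute for $(Tl)^*$ on the left of $(Tl)^* \hc \beta \leq \nu \hc l^*$: replacing by a smaller quantity would weaken rather than imply the desired inequality. Apart from this compositivity check, both assertions reduce to routine chains of two inequalities apiece.
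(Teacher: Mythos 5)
Your proof is correct, but the first assertion is handled by a genuinely different route from the paper's. The paper establishes, via \propref{image of Kan extension satisfying Beck-Chevalley}, that $Tl$ is the left Kan extension of $Td$ along $TJ$ satisfying the Beck"/Chevalley condition, and then invokes the strengthened universal property of \thmref{Beck-Chevalley} to factor the pasting of the $T$"/closedness cell of $J$, the $T$"/morphism cell of $d$ and $\eta$ through the `$T$"/image' $T\eta$; the resulting factorisation is the cell exhibiting $l$ as a $T$"/morphism. You instead bypass the Kan extension machinery entirely: using \lemref{horizontal cells} you convert the $T$"/morphism property into the single inequality $(Tl)^* \hc \beta \leq \nu \hc l^*$, substitute the identities $l^* = d^* \hc J$ and $(Tl)^* = T(l^*) = T(d^* \hc J) = Td^* \hc TJ$ (the latter using normality and the compositivity hypothesis, whose necessity you correctly diagnose --- lax functoriality alone gives only $Td^* \hc TJ \leq T(l^*)$, which points the wrong way), and verify the result by chaining $T$"/closedness of $J$ with the inequality $Td^* \hc \alpha \leq \nu \hc d^*$ expressing that $d$ is a $T$"/morphism. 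This exploits thinness --- cell existence reduces to inequalities --- and makes visible that the conclusion depends only on the identity $l^* = d^* \hc J$, not on the universal property of $l$ itself; the paper's factorisation argument, by contrast, is the form that survives in non"/thin double categories. Your treatment of the moreover assertion coincides with the paper's, which proves $\nu \hc l^* = \nu \hc d^* \hc J \leq Td^* \hc \alpha \hc J \leq Td^* \hc TJ \hc \beta = T(d^* \hc J) \hc \beta = Tl^* \hc \beta$ by exactly the chain you describe.
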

	\begin{proof}
		Denoting by $\alpha$, $\beta$ and $\nu$ the $T$"/graph structures of $A$, $B$ and $M$, consider on the left"/hand side below the composition of the cells exhibiting $J$ as a $T$"/closed horizontal morphism, $d$ as a $T$"/morphism and $l$ as the left Kan extension of $d$ along $J$.
		\begin{displaymath}
			\begin{tikzpicture}[textbaseline]
				\matrix(m)[math35]{TA & TB & B \\ TA & A & B \\ TM & M & M \\};
				\path[map]	(m-1-1) edge[barred] node[above] {$TJ$} (m-1-2)
										(m-1-2) edge[barred] node[above] {$\beta$} (m-1-3)
										(m-2-1) edge[barred] node[above] {$\alpha$} (m-2-2)
														edge node[left] {$Td$} (m-3-1)
										(m-2-2) edge[barred] node[above] {$J$} (m-2-3)
														edge node[right] {$d$} (m-3-2)
										(m-2-3) edge node[right] {$l$} (m-3-3)
										(m-3-1) edge[barred] node[below] {$\nu$} (m-3-2);
				\path				(m-1-1) edge[eq] (m-2-1)
										(m-1-3) edge[eq] (m-2-3)
										(m-3-2) edge[eq] (m-3-3);
				\draw				($(m-1-1)!0.5!(m-2-3)$) node[rotate=-90] {$\leq$}
										($(m-2-2)!0.5!(m-3-3)$) node {$\eta$}
										($(m-2-1)!0.5!(m-3-2)$) node[rotate=-90] {$\leq$};
			\end{tikzpicture} = \begin{tikzpicture}[textbaseline]
				\matrix(m)[math35]{TA & TB & B \\ TM & TM & M \\};
				\path[map]	(m-1-1) edge[barred] node[above] {$TJ$} (m-1-2)
														edge node[left] {$Td$} (m-2-1)
										(m-1-2) edge[barred] node[above] {$\beta$} (m-1-3)
														edge node[right, inner sep=1.5pt] {$Tl$} (m-2-2)
										(m-1-3) edge node[right] {$l$} (m-2-3)
										(m-2-2) edge[barred] node[below] {$\nu$} (m-2-3);
				\path				(m-2-1) edge[eq] (m-2-2);
				\draw				($(m-1-2)!0.5!(m-2-3)$) node[rotate=-90] {$\leq$}
										($(m-1-1)!0.5!(m-2-2)$) node {$T\eta$};
			\end{tikzpicture}
		\end{displaymath}
		Under the assumptions on $l$ it follows from \propref{image of Kan extension satisfying Beck-Chevalley} that $Tl$ is the left Kan extension of $Td$ along $TJ$, such that it satisfies the Beck"/Chevalley condition. Hence, by \thmref{Beck-Chevalley} the composite on the left factors through the `$T$"/image' of $\eta$ as shown, and this factorisation exhibits $l$ as a $T$"/morphism.
		
		That $l$ is $T$"/open whenever $d$ and $J$ are is shown by
		\begin{displaymath}
			\nu(\id, l) = \nu \hc l^* \overset{\text{(i)}}= \nu \hc d^* \hc J \overset{\text{(ii)}}\leq Td^* \hc \alpha \hc J \overset{\text{(iii)}}\leq Td^* \hc TJ \hc \beta = T(d^* \hc J) \hc \beta \overset{\text{(i)}}= Tl^* \hc \beta,
		\end{displaymath}
		where the equalities (i) follow from the Beck"/Chevalley condition for $l$ and the inequalities (ii) and (iii) follow from $d$ and $J$ being $T$"/open respectively.
	\end{proof}
	
	\begin{example}
		Let $\hmap JAB$ be an upper hemi"/continuous relation (\exref{hemi-continuous relation}) between topological spaces, such that $\rev Jy \subseteq A$ is non"/empty for each $y \in B$, and let $\map dA{\brks{-\infty, \infty}}$ be a upper semi"/continuous map (\exref{Scott topology}). Regarding $A$ and $B$ as discrete ordered sets, by the extreme value theorem (see e.g.\ \thmref{extreme value theorem for ordered closure spaces} below) the conditions on $J$ imply that the left Kan extension $\map lB{\brks{-\infty, \infty}}$ of $d$ along $J$ satisfies the Beck"/Chevalley condition, that is it is given by the maxima
		\begin{displaymath}
			ly = \max_{x \in \rev Jy} dx
		\end{displaymath}
		for all $y \in B$. Applying the previous theorem we find that $l$ is upper semi"/continuous, thus partly recovering Theorem 2 of Section~VI.3 of \cite{Berge59} (or Lemma~17.30 of \cite{Aliprantis-Border06}) which is stated in terms of the reverse $\hmap{\rev J}BA$. Moreover its second assertion means that $l$ is $U$"/open (\exref{open/closed maps between ordered topological spaces}) whenever $d$ is $U$"/open and $J$ is lower hemi"/continuous.
	\end{example}
	
	\begin{example}
		Let $\hmap JAB$ be a relation between topological spaces that is both lower and upper hemi"/continuous (\exref{hemi-continuous relation}), while $Jx \subseteq B$ is non"/empty for each $x \in A$, and suppose that $\map eB{\brks{-\infty, \infty}}$ is continuous. Since a function into $\brks{-\infty, \infty}$ is continuous precisely if it is both lower and upper semi"/continuous (\exref{Scott topology}), by combining \exref{right Kan extension into extended real line} and the previous example (the latter applied to $\hmap{\rev J}BA$ and $\map eB{\brks{-\infty, \infty}}$) we find that the extension \mbox{$\map mA{\brks{-\infty, \infty}}$} of $e$ along $J$, given by
		\begin{displaymath}
			mx = \max_{y \in Jx} ey
		\end{displaymath}
		for all $x \in A$, is continuous. This recovers the main assertion of Berge's maximum theorem, as stated in Section~4.3 of \cite{Berge59}.
	\end{example}
	
	\begin{example}
		Consider the Lawvere quantale $\brks{0, \infty}$ with the canonical normalised modular approach space structure with point"/set distance $\delta_{\sup}$ given in \exref{Lawvere quantale as a modular approach space}. Let $\map J{A \times B}{\brks{0, \infty}}$ be a $U$"/closed metric relation (see \thmref{P and U horizontal morphism}(b) and \exref{open/closed relations}) between approach spaces that is discrete, that is $\im J \subseteq \set{0, \infty}$, such that $\rev J_0 y \neq \emptyset$ for all $y \in B$, and suppose that $\map dA{\brks{0, \infty}}$ is continuous. Regarding $A$ and $B$ as modular approach spaces with discrete metrics, the left Kan extension $\map lB{\brks{0, \infty}}$ of $d$ along $J$ is defined on $y \in B$ by
		\begin{displaymath}
			ly = \inf_{x \in \rev J_0 y} dx;
		\end{displaymath}
		see \exref{Kan extensions in ordered sets}. By \thmref{extreme value theorem for modular V-valued pseudotopological spaces} below the conditions on $J$ ensure that $l$ satisfies the Beck"/Chevalley condition. It thus follows from the previous theorem that $l$ is continuous, while it is $P$"/open (\exref{open/closed maps between modular V-valued closure spaces}) whenever $d$ and $J$ are $P$"/open.
	\end{example}
	
	\begin{example}
		A result for left Kan extensions $\map lB{\Delta_{\tn}}$ between probabilistic approach spaces, analogous to the previous example, can be derived from the theorem above as well. In this case however the hypothesis $U(d^* \hc J) = Ud^* \hc UJ$ may not be satisfied: while the extension of the ultrafilter monad $U$ to $\enRel{\brks{0, \infty}}$ is strict (\exref{ultrafilter monad}), I do not know if its extension to $\enRel{\Delta_{\tn}}$ is too. Moreover, in general \thmref{extreme value theorem for modular V-valued pseudotopological spaces} applies only in the cases where the convolution product on $\Delta_{\tn}$ is induced by multiplication on $\brks{0, 1}$ or the \L ukasiewicz operation, see \propref{condition d for Delta}; it may fail to apply in the case of the frame operation $p \tn q = \min\set{p, q}$, see \exref{counter example condition d}.
	\end{example}
	
	\section{Generalisations of the extreme value theorem} \label{extreme value theorem section}
	In this last section we investigate Kan extensions that satisfy the Beck"/Chevalley condition (\thmref{Beck-Chevalley}). We treat two cases: the first concerning left Kan extensions between modular closure spaces (\exref{modular closure space}) and the second concerning a restricted class of left Kan extensions between modular $\V$"/valued pseudotopological spaces (\exref{modular (U, V)-category}). Starting with the former, the theorem below is a straightforward generalisation of Weierstra\ss' extreme value theorem (see e.g.\ Corollary~2.35 of \cite{Aliprantis-Border06}).
	
	A subset $S \subseteq A$ of a closure space $A$ is called \emph{compact} if for every family $(V_i)_{i \in I}$ of closed subsets of $A$ we have
	\begin{displaymath}
		\Bigpars{\Forall_{\substack{J \subseteq I\\ \text{$J$ is finite}}} S \isect \Isect_{j \in J} V_j \neq \emptyset} \quad \implies \quad S \isect \Isect_{i \in I} V_i \neq \emptyset.
	\end{displaymath}
	It is straightforward to check that this is equivalent to the definition of compact subsets in terms of finite open subcovers, which is often used in the case of topological spaces. In particular, continuous maps of closure spaces preserve compact sets.
	
	Recall that a subset $S$ of an ordered set $M$ is called \emph{up"/directed} whenever it is non"/empty and every finite subset of $S$ has an upper bound in $S$, that is for all $u, v \in S$ there is $w \in S$ with $u \leq w$ and $v \leq w$.
	
	\begin{theorem} \label{extreme value theorem for ordered closure spaces}
		Let $A$ and $M$ be modular closure spaces, $\map dAM$ a monotone continuous function and $\hmap JAB$ a modular relation into an ordered set $B$. The left Kan extension $\map lBM$ of $d$ along $J$, if it exists, satisfies the Beck"/Chevalley condition whenever $M$ is normalised and $d(\rev Jy) \subseteq M$ is compact and up"/directed for each $y \in B$.
	\end{theorem}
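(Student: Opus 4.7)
The plan is to unpack the Beck--Chevalley condition into a concrete statement about $S \dfn d(\rev Jy) \subseteq M$ and then use the hypotheses (compactness, up-directedness, normalisation of $M$) to show that the supremum of $S$ lies in $S$. By \exref{Beck-Chevalley for ordered sets} combined with \exref{Kan extensions in ordered sets}, if the left Kan extension exists then $ly = \sup_{x \in \rev Jy} dx = \sup S$, and the Beck--Chevalley condition for $l$ at $y$ is exactly the assertion that this supremum is attained in $S$, i.e.\ $ly \in S$. So the task reduces to showing: for each $y \in B$, the compact up-directed set $S \subseteq M$ contains its supremum.

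First, I would exploit the assumption that $M$ is normalised (\exref{modular closure space}): in a normalised modular closure space the specialisation order and the given order agree, so for every $v \in M$ the principal upset $\upset v$ equals the closure $\overline{\set v}$, and is in particular closed. This identification is what lets us convert the order-theoretic content (upper bounds of $S$) into the topological content required by the definition of compactness.

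Next I would apply compactness of $S$ to the family $\pars{\upset v}_{v \in S}$ of closed subsets of $M$. For any finite $\set{v_1, \dots, v_n} \subseteq S$, up-directedness gives some $w \in S$ with $w \geq v_i$ for all $i$, hence $w \in S \isect \Isect_{i=1}^n \upset v_i$; so every finite subfamily has non-empty intersection with $S$. Compactness of $S$ then yields a point $x_0 \in S \isect \Isect_{v \in S} \upset v$. Such an $x_0$ is both a member of $S$ and an upper bound of $S$, so $x_0 \leq ly$ (because $x_0 \in S$ and $ly$ is the supremum) and $ly \leq x_0$ (because $x_0$ dominates every element of $S$, including those $d(x)$ whose supremum is $ly$). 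Hence $ly = x_0 \in S$, which is the desired Beck--Chevalley condition.

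There is no serious obstacle here beyond verifying the two little compatibilities used above: that $\upset v = \overline{\set v}$ really follows from normalisation in a modular closure space (so that the $\upset v$ are closed, making the $M$-level compactness definition applicable to the order-theoretic family $(\upset v)_{v \in S}$), and that an upper bound of $S$ lying inside $S$ must coincide with $\sup S$. Both are immediate; all of the real work is being done by compactness and up-directedness, precisely as in the classical Weierstra\ss\ argument.
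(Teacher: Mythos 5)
Your proposal is correct and follows essentially the same route as the paper's proof: both reduce the Beck--Chevalley condition to the statement that $d(\rev Jy)$ attains its supremum as a maximum, then apply compactness to the family of principal upsets $\bigpars{\upset z}_{z \in d(\rev Jy)}$, using up-directedness for the finite intersection property and normalisation to know these upsets are closed. The only cosmetic difference is that you phrase the conclusion as $ly \in d(\rev Jy)$ while the paper phrases it as $d(\rev Jy)$ having a maximum.
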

	\begin{proof}
		First recall from \exref{modular closure space} that all principal upsets $\upset z$ in $M$ are closed because $M$ is normalised. As described in \exref{Beck-Chevalley for ordered sets} we have to show that for each $y \in B$ the set $d(\rev Jy)$ has a maximum in $M$. To this end consider in $M$ the family of closed principal subsets
		\begin{displaymath}
			\bigpars{\upset z}_{z \in d(\rev Jy)}.
		\end{displaymath}
		We claim that $d(\rev Jy) \isect \Isect_{i = 1}^n \upset z_i \neq \emptyset$ for any finite sequence $z_1, \dotsc, z_n \in d(\rev J y)$. Indeed, since $d(\rev Jy)$ is up"/directed the set $\set{z_1, \dotsc, z_n}$ has an upper bound $z$. As $\upset z \subseteq \upset z_i$ for each $i$ we conclude $z \in d(\rev Jy) \isect \upset z \subseteq d(\rev Jy) \isect \Isect_{i = 1}^n \upset z_i$. By compactness of $d(\rev Jy)$ it follows that there exists some $w \in d(\rev J y) \isect \Isect_{z \in d(\rev Jy)} \upset z$. But this means $z \leq w$ for all $z \in d(\rev Jy)$, showing that $w$ is a maximum of $d(\rev Jy)$.
	\end{proof}
	
	The second generalisation of the extreme value theorem applies to a restricted class of left Kan extensions between modular $\V$"/valued pseudotopological spaces (\exref{modular (U, V)-category}). Remember that, for a $\V$"/profunctor $\hmap JAB$ and $v \in \V$, we write $\hmap{J_v}AB$ for the ordinary relation given by
	\begin{displaymath}
		x J_v y \quad \iff \quad v \leq J(x,y)
	\end{displaymath}
	for all $x \in A$ and $y \in B$.
	\begin{theorem} \label{extreme value theorem for modular V-valued pseudotopological spaces}
		Let $\V$ be a completely distributive quantale so that $\V = \V_{\dashcirc}$ is itself a modular $\V$"/valued topological space, see \exref{V as a modular approach space}. Consider a modular $\V$"/valued pseudotopological space $A$ and a $\V$"/category $B$, as well as a continuous $\V$"/functor $\map dA{\V_{\dashcirc}}$ and a $\V$"/profunctor $\hmap JAB$. The left Kan extension $\map lB{\V_{\dashcirc}}$ of $d$ along $J$ satisfies the Beck"/Chevalley condition whenever $J$ satisfies the following conditions:
		\begin{enumerate}[label=\textup{(\alph*)}]
			\item $J$ is discrete, that is $\im J \subseteq \set{\bot, k}$;
			\item $J$ is $U$"/compact, see \defref{U-compact};
			\item for each $y \in B$ the set $d(\rev J_ky)$ is up"/directed in $\V$;
			\item for each $y \in B$
			\begin{displaymath}
				k \leq \sup_{z \in d(\rev J_ky)} \bigpars{\sup d(\rev J_ky) \dashcirc z}.
			\end{displaymath}
		\end{enumerate}
	\end{theorem}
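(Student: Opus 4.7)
My plan is to reduce the Beck--Chevalley condition for $l$ to condition~(d) via \propref{Kan extension expression} and \propref{Beck-Chevalley condition}, and then to indicate how~(b), (c) and the continuity of $d$ together establish~(d) by an ultrafilter argument parallel to the proof of \thmref{extreme value theorem for ordered closure spaces}.

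First, I would apply \propref{Kan extension expression} to $M = \V_{\dashcirc}$, obtaining $ly = \sup_{x \in A} dx \tens J(x, y)$. Since~(a) makes $J$ discrete with values in $\set{\bot, k}$ and $k$ is the unit of $\tens$, this collapses to $ly = \sup d(\rev J_k y)$ for each $y \in B$. Next, by \propref{Beck-Chevalley condition}, the Beck--Chevalley condition for $l$ is the inequality $k \leq \sup_{x \in A} (ly \dashcirc dx) \tens J(x, y)$, which under~(a) and after substituting the value of $ly$ reduces to
\[
	k \leq \sup_{z \in d(\rev J_k y)} \bigpars{\sup d(\rev J_k y) \dashcirc z},
\]
i.e.\ precisely condition~(d). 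This already proves the theorem.

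The extreme-value content of the remaining hypotheses lies in how~(b), (c) and the continuity of $d$ supply a verification of~(d). By~(c), the family $\bigbrcs{\inv d(\upset z) \isect \rev J_k y}_{z \in d(\rev J_k y)}$ is a proper filter base on $A$; extending it to an ultrafilter $\mf x$ on $A$ (which then contains $\rev J_k y$), one verifies the identity $\sup_{S \in \mf x} \inf d(S) = \sup d(\rev J_k y) = ly$, with the inequality $\geq$ using the filter base and $\leq$ using that $\inf d(S \isect \rev J_k y) \leq \sup d(\rev J_k y)$ for every $S \in \mf x$. Because $\V_{\dashcirc}$ is $U$-cocomplete with convergence $\nu_{\inf}(\mf x', x) = (\sup_{S \in \mf x'} \inf S) \dashcirc x$ (see \exref{V as a modular approach space}), the continuity of $d$ then gives $\alpha(\mf x, x_0) \leq \nu_{\inf}(Ud(\mf x), dx_0) = ly \dashcirc d x_0$ for every $x_0 \in A$; combining this with the $U$-compactness estimate $k \leq \sup_{x_0 \in \rev J_k y} \alpha(\mf x, x_0)$ supplied by~(b) yields~(d). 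The main technical obstacle is securing the identity $\sup_{S \in \mf x} \inf d(S) = ly$; after that the remaining inequalities chain together routinely.
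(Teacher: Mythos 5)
Your proposal is correct, and its first half takes a genuinely different---and much shorter---route than the paper. The paper's proof begins exactly as yours does, using \propref{Beck-Chevalley condition} and discreteness to reduce the Beck--Chevalley condition to $k \leq \sup_{x \in \rev J_ky} \V_{\dashcirc}(ly, dx)$, but it does not observe, as you do, that since $z = dx$ ranges over precisely $d(\rev J_ky)$ as $x$ ranges over $\rev J_ky$, this inequality is \emph{literally} the rewritten form $k \leq \sup_{z \in d(\rev J_ky)} (ly \dashcirc z)$ of condition~(d); so (a) and (d) alone already yield the stated conclusion, with (b), (c) and the continuity of $d$ unused. The paper instead descends the long way around: for any ultrafilter $\mf y$ containing $\rev J_ky$ it bounds that supremum below by $(U\V_{\dashcirc})\bigpars{(\iota \of l)(y), (Ud)(\mf y)}$---using normalisation of $\V_{\dashcirc}$, continuity of $d$ and $U$-compactness of $J$---then builds $\mf y$ from the same filter base $X_z = \rev J_ky \isect \inv d(\upset z)$ that you use (this is where (c) enters), and finally evaluates $U\V_{\dashcirc}$ via \eqref{equivalent definition UJ} and the counit inequality $ly \tens (ly \dashcirc z) \leq z$, invoking (d) only at the very last step. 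Your second half then correctly reassembles these same topological ingredients in the opposite direction, showing that (a), (b), (c) and continuity of $d$ together \emph{imply} (d): the filter base is proper by up-directedness, the identity $\sup_{S \in \mf x} \inf d(S) = ly$ holds as you argue, and the chain through $\nu_{\inf}$ and $U$-compactness closes; the only step left tacit is the identification $\sup_{T \in (Ud)(\mf x)} \inf T = \sup_{S \in \mf x} \inf d(S)$ needed for $\nu_{\inf}\bigpars{(Ud)(\mf x), dx_0} = ly \dashcirc dx_0$, which is routine since $d(S) \in (Ud)(\mf x)$ for every $S \in \mf x$ and $d\bigpars{\inv d(T)} \subseteq T$. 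In sum, each half of the hypothesis list covers the theorem on its own: your route is shorter and exposes this redundancy, while the paper's route keeps (d) visible as a separate quantale-level input---automatic for $\brks{0, \infty}$, verified for $\Delta_\times$ and the \L ukasiewicz case in \propref{condition d for Delta}, and failing for abstract up-directed sets in $\Delta_{\min}$ by \exref{counter example condition d}---at the cost of obscuring that, under (a), condition (d) and the Beck--Chevalley inequality are the same statement.
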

	Notice that, by \exref{Kan extensions in ordered sets}, condition (a) above means that the Kan extension $l$ is given by
	\begin{equation} \label{definition of ly}
		ly = \sup d(\rev J_ky)
	\end{equation}
	for all $y \in B$, so that condition (d) can be rewritten as
	\begin{displaymath}
		k \leq \sup_{z \in d(\rev J_ky)} \pars{ly \dashcirc z}
	\end{displaymath}
	for each $y \in B$. If $\V = \brks{0, \infty}$ then this inequality follows from the fact that the map $ly \dashcirc \dash$ preserves infima.
	
	After giving the proof of the theorem above, \exref{counter example condition a} below shows that condition~(a) on $J$ cannot be dispensed with. \propref{condition d for Delta} shows that condition~(d) holds for $\V = \Delta_\times$ and $\Delta_{\tn}$, where $\tn$ denotes the \L ukasiewicz operation (see \exref{Delta}). \exref{counter example condition d} then shows that condition~(d) does not generally hold in the case of $\V = \Delta_{\min}$.
	\begin{proof}
		Using \propref{Beck-Chevalley condition}, first notice that discreteness of $J$ means that the Beck"/Chevalley condition for $l$ reduces to the inequality
		\begin{equation}\label{Beck-Chevalley inequality}
			k \leq \sup_{x \in \rev J_ky} \V_{\dashcirc}(ly, dx)
		\end{equation}
		for all $y \in B$. While in general there might not be any $x \in \rev J_ky$ with $k \leq \V_{\dashcirc}(ly, dx) = ly \dashcirc dx$ notice that, if such a $x$ does exist then $ly \leq dx$ follows, so that the supremum $ly$ in \eqref{definition of ly} is attained as a maximum.
		
		Discreteness of $J$ also means that $U$"/compactness of $J$ (\defref{U-compact}) reduces to
		\begin{equation}\label{U-compactness}
			k \leq \sup_{x \in \rev J_ky} \alpha(\mf y, x)
		\end{equation}
		for all $y \in B$ and $\mf y \in UA$ with $\rev J_ky \in \mf y$, where $\hmap\alpha{UA}A$ is the $\V$"/valued convergence relation of $A$.
		
		Let us fix $y \in B$. Using the above any $\mf y \in UA$ with $\rev J_ky \in \mf y$ gives a lower bound for the right"/hand side of \eqref{Beck-Chevalley inequality} as follows, where $\hmap{\nu \dfn \nu_{\inf}}{U\V_{\dashcirc}}{\V_{\dashcirc}}$ is the $\V$"/valued convergence relation of $\V_{\dashcirc}$ (see \exref{V as a modular approach space}).
		\begin{align*}
			\sup_{x \in \rev J_k y} \V_{\dashcirc}(ly, dx) &\overset{\text{(i)}}= \sup_{x \in \rev J_k y} \nu\bigpars{(\iota \of l)(y), dx} \\
			&\overset{\text{(ii)}}\geq \sup_{\substack{x \in \rev J_k y \\ \mf x \in UA}} (U\V_{\dashcirc})\bigpars{(\iota \of l)(y), (Ud)(\mf x)} \tens \alpha(\mf x, x) \\
			&\geq (U\V_{\dashcirc})\bigpars{(\iota \of l)(y), (Ud)(\mf y)} \tens \sup_{x \in \rev J_ky} \alpha(\mf y, x) \\
			&\overset{\text{(iii)}}\geq (U\V_{\dashcirc})\bigpars{(\iota \of l)(y), (Ud)(\mf y)}
		\end{align*}
		 The equality denoted (i) here is a consequence of $\V_{\dashcirc}$ being normalised, see \eqref{normalised T-category}; (ii) follows from $(Ud)^* \hc \alpha \leq \nu(\id, d)$, which is obtained by applying \lemref{horizontal cells} to the cell exhibiting $d$ as a $U$"/morphism; (iii) follows from \eqref{U-compactness}.
		 
		We conclude that to prove the Beck"/Chevalley condition for $l$, that is \eqref{Beck-Chevalley inequality} holds, it suffices to construct an ultrafilter $\mf y$ on $\rev J_k y \subseteq A$ that satisfies
		\begin{equation} \label{reduced Beck-Chevalley inequality}
			k \leq (U\V_{\dashcirc})\bigpars{(\iota \of l)(y), (Ud)(\mf y)}.
		\end{equation}
		To construct the ultrafilter $\mf y$ notice that condition (c) on $J$ implies that the sets
		\begin{displaymath}
			X_z \dfn \rev J_k y \isect \inv d(\upset z),
		\end{displaymath}
		where $z$ ranges over $d(\rev J_ky)$, form a proper filter base; we choose $\mf y$ to be any ultrafilter containing all of them. From the definition of the $X_z$ it follows that $\mf y$ contains $\rev J_k y$ as well as the preimages $\inv d(\upset z)$, for each $z \in d(\rev J_ky)$. The latter implies that $\mf y$ contains the sets $\inv d\bigpars{\upset\bigpars{ly \tens (ly \dashcirc z)}}$ too: this is a consequence of the inequalities $ly \tens (ly \dashcirc z) \leq z$, which form the counit of the adjunction $(ly \tens \dash) \ladj (ly \dashcirc \dash)$.
		
		That $\mf y$ satisfies \eqref{reduced Beck-Chevalley inequality} is shown by
		\begin{align*}
			(U\V_{\dashcirc})\bigpars{(\iota \of l)(y), (Ud)(\mf y)} &= \sup\bigbrcs{v \in \V \mid \bigbrks{(\iota \of l)(y)} \bigpars{U(\V_\dashcirc)_v} \bigbrks{(Ud)(\mf y)}} \\
			&= \sup\bigbrcs{v \in \V \mid \inv d\bigpars{\upset(ly \tens v)} \in \mf y} \\
			&\geq \sup_{z \in d(\rev J_ky)} ly \dashcirc z \geq k
		\end{align*}
		where the four (in-)equalities are consequences of respectively the equivalent definition \eqref{equivalent definition UJ} of $U\V_{\dashcirc}$, the equivalences below, the discussion above and, finally, condition (d) on $J$. 
		\begin{align*}
			\bigbrks{(\iota \of l)(y)} \bigpars{U(\V_\dashcirc)_v} \bigbrks{(Ud)(\mf y)} \quad &\iff \quad (\V_\dashcirc)_v(ly) \in (Ud)(\mf y) \\
			&\iff \quad \set{x \in A \mid v \leq ly \dashcirc dx} \in \mf y \\
			&\iff \quad \set{x \in A \mid ly \tens v \leq dx} \in \mf y \\
			&\iff \quad \inv d\bigpars{\upset(ly \tens v)} \in \mf y
		\end{align*}
		This completes the proof.
	\end{proof}
	
	\begin{example} \label{counter example condition a}
		To see that condition (a) of the theorem above, that is discreteness of $\hmap JAB$, cannot be left out we consider the Sierpi\'nski space $\2 = \set{\bot, \top}$ (see \exref{Scott topology}) as a normalised modular approach space (\exref{modular (U, V)-category}), by taking its image under the composite of embeddings
		\begin{displaymath}
			\Top \xrar I \mathsf{App} \xrar N \mathsf{ModApp}.
		\end{displaymath}
		The functor $N$ here is described in \exref{modular (U, V)-category}, while the functor $I$ maps any topological space $A$, with closure operation $S \mapsto \bar S$, to the approach space $IA = (A, \delta)$ with point"/set distance given by
		\begin{displaymath}
			\delta(S, x) = \begin{cases}
				0 & \text{if $x \in \bar S$;} \\
				\infty & \text{otherwise.}
			\end{cases}
		\end{displaymath}
		
		Regarding the Lawvere quantale $\brks{0, \infty}$ as equipped with its canonical modular point"/set distance $\delta_{\sup}$ (\exref{Lawvere quantale as a modular approach space}), let $\map d\2{\brks{0,\infty}}$ and $\hmap J\2*$, where $* = \set*$ denotes the singleton approach space, be defined by
		\begin{displaymath}
			d(\bot) = 2, \qquad d(\top) = 0, \qquad J(\bot, *) = 0 \qquad \text{and} \qquad J(\top, *) = 1.
		\end{displaymath}
		It is straightforward to check that $d$ is a non"/expansive continuous map of modular approach spaces, that $J$ is a $U$"/compact modular metric relation, and that their left Kan extension \mbox{$\map l*{\brks{0,\infty}}$} is given by (see \propref{Kan extension expression})
		\begin{displaymath}
			l(*) = \min\bigbrcs{d(\bot) + J(\bot, *), d(\top) + J(\top, *)} = \min\set{2, 1} = 1.
		\end{displaymath}
		On the other hand we have
		\begin{multline*}
			\max\bigbrcs{\bigpars{d(\bot) \ominus l(*)} + J(\bot, *), \bigpars{d(\top) \ominus l(*)} + J(\top, *)} \\
			= \textstyle\max\set{2 \ominus 1 + 0, 0 \ominus 1 + 1} = 1 > 0,
		\end{multline*}
		so that by \exref{Beck-Chevalley for metric spaces} $l$ fails to satisfy the Beck"/Chevalley condition.
	\end{example}
	
	The following proposition shows that condition (d) of the previous theorem always holds in the cases $\V = \Delta_\times$ and $\V = \Delta_{\tn}$, the quantale of distance distribution functions equipped with multiplication $\tens$ given by convolution with respect to either multiplication $\times$ or the \L ukasiewicz operation on $\brks{0,1}$; see \exref{Delta}. Remember that the unit $\map k{\brks{0, \infty}}{\brks{0, 1}}$ of $\Delta_{\tn}$ is given by $k(t) = 1$ for all $t > 0$.
	\begin{proposition} \label{condition d for Delta}
		Consider the quantale $\Delta_{\tn}$ where either $\tn = \times$ or $\tn$ is the \L ukasiewicz operation, see \exref{Delta}. For any up"/directed set $\Phi \subseteq \Delta_{\tn}$:
		\begin{displaymath}
			\sup_{\phi \in \Phi} (\sup \Phi \dashcirc \phi) = k.
		\end{displaymath}
	\end{proposition}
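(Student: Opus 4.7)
The plan is as follows. Since $k$ is the top element of $\Delta_{\tn}$---every distance distribution function $\phi$ satisfies $\phi(t) \leq 1 = k(t)$ for $t > 0$ with $\phi(0) = 0 = k(0)$---the inequality $\sup_{\phi \in \Phi}(\sup \Phi \dashcirc \phi) \leq k$ holds automatically. Writing $\Psi \dfn \sup \Phi$, the adjunction $\Psi \tens (-) \ladj \Psi \dashcirc (-)$ recalled in \exref{V-profunctors} then recasts the reverse inequality as the following assertion: for every $t_0 > 0$ and every $\eta \in (0, 1)$ there exist $\phi \in \Phi$ and a distance distribution function $v \in \Delta$ with $v(t_0) \geq \eta$ and $\Psi \tens v \leq \phi$.

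I would take $v = v_{s_0, \eta}$ to be the step function defined by $v_{s_0, \eta}(s) \dfn \eta$ for $s > s_0$ and $v_{s_0, \eta}(s) \dfn 0$ otherwise, for some $s_0 \in (0, t_0)$; this is a legitimate distance distribution function with $v_{s_0, \eta}(t_0) = \eta$. A direct convolution calculation then yields $(\Psi \tens v_{s_0, \eta})(t) = \eta \cdot \Psi(t - s_0)$ for $t > s_0$ (and $0$ otherwise) in the case $\tn = \times$, and $(\Psi \tens v_{s_0, \eta})(t) = \max(0, \Psi(t - s_0) + \eta - 1)$ for $t > s_0$ (and $0$ otherwise) in the \L ukasiewicz case. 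In either case $\Psi \tens v_{s_0, \eta}$ vanishes on $[0, s_0]$ and is bounded uniformly above by $\eta < 1$, sitting strictly below $\Psi$ in a way quantitatively controlled by the strictness of $\tn$ on $[0, 1)$.

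The heart of the proof is then to establish that $\Psi \tens v_{s_0, \eta}$ is totally below $\Psi$ in $\Delta_{\tn}$: granted this, since $\Psi = \sup \Phi$, the definition of the totally below relation immediately produces a $\phi \in \Phi$ with $\Psi \tens v_{s_0, \eta} \leq \phi$. I would argue the totally below claim as follows: given any $S \subseteq \Delta$ with $\sup S \geq \Psi$, if no $s \in S$ dominated $\Psi \tens v_{s_0, \eta}$, then each $s$ would admit a ``bad point'' $t_s > s_0$ at which $s(t_s)$ is strictly below $(\Psi \tens v_{s_0, \eta})(t_s)$. By monotonicity and left-continuity of $s$, this forces $s$ to be bounded uniformly away from $1$ on the interval $[0, s_0]$ (by a gap at least of the order of $1 - \eta$), contradicting the hypothesis $\sup_{s \in S} s(t) \geq \Psi(t)$ on $(0, s_0]$ once $s_0$ is chosen with $\Psi(s_0) > \eta$. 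When $\Psi(t_0) \leq \eta$ such a choice is unavailable, and this regime requires a separate treatment: one observes that $(\Psi \dashcirc \phi)(t_0)$ is then bounded below by $\inf_r \phi(r + t_0)/\Psi(r)$, which tends to $1$ as $\phi$ is made to approximate $\Psi$ on a sufficiently long tail, using directedness of $\Phi$ together with left-continuity.

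The principal obstacle will be the gap estimate at the heart of the totally below argument: this is where the quantitative strictness $p \tn \eta < p$ for $p > 0$ and $\eta < 1$ is essential. For $\tn = \times$ the gap is $(1 - \eta) \cdot p$, and for the \L ukasiewicz operation one obtains an analogous linear gap (truncated at $0$). These strictness properties fail precisely for $\tn = \min$, where $\min(p, \eta) = p$ whenever $p \leq \eta$ and so multiplication by the non-unit $v_{s_0, \eta}$ leaves $\Psi$ unchanged on the regime where $\Psi$ itself sits below $\eta$; this failure is reflected in \exref{counter example condition d} below.
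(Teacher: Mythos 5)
Your setup is sound and matches the paper's: $k$ is indeed the top element of $\Delta_{\tn}$, and by adjunction it suffices to produce, for each $t_0 > 0$ and $\eta \in (0,1)$, some $\phi \in \Phi$ with $\sup\Phi \tens v_{s_0, \eta} \leq \phi$ --- your $v_{s_0,\eta}$ is exactly the paper's $\pi_{(u,p)}$. Your first regime is also correct, and can be said more simply: writing $\Psi = \sup\Phi$, since $a \tn b \leq b$ for \emph{every} t"/norm, the convolution $\Psi \tens v_{s_0,\eta}$ is bounded above by $\eta$ and vanishes on $[0, s_0]$; as suprema in $\Delta$ are pointwise, whenever $\Psi(s_0) > \eta$ there is $\phi \in \Phi$ with $\phi(s_0) > \eta$, and this $\phi$ dominates $\Psi \tens v_{s_0,\eta}$ by monotonicity. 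But note two things. First, this argument uses no property of $\tn$ beyond the unit law --- it works verbatim for $\tn = \min$ --- so your claim that the ``quantitative strictness $p \tn \eta < p$'' powers this step, and that its failure for $\min$ is what \exref{counter example condition d} reflects, misplaces where the hypothesis on $\tn$ enters. Second, the regime only covers $\eta < \Psi(t_0)$, i.e.\ it yields $\sup_{\phi}(\Psi \dashcirc \phi)(t_0) \geq \Psi(t_0)$, which falls short of $k(t_0) = 1$ whenever $\Psi(t_0) < 1$.

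The genuine gap is your one"/sentence ``separate treatment'' of the main case $\eta \geq \Psi(t_0)$: the assertion that $\inf_r \bigpars{\Psi(r) \dashcirc \phi(r + t_0)}$ tends to $1$ ``as $\phi$ is made to approximate $\Psi$ on a sufficiently long tail, using directedness''. Directedness does not supply such a $\phi$: it gives, for each \emph{finite} set of points, one $\phi \in \Phi$ nearly dominating $\Psi$ there, whereas your infimum imposes a continuum of constraints at every scale of $\Psi$'s values. Reducing that continuum to finitely many points is the entire content of the paper's proof: it introduces the levels $B \tn^n p$ (with $B = \sigma(\infty)$) and the crossing points $t_n$ of $s \mapsto \sigma(s \ominus u/2)$, picks one $\phi_n \in \Phi$ per level so that monotonicity covers each interval $[t_{n+1}, t_n)$, and --- the crux --- proves that \emph{finitely many} levels suffice ($B p^N < \sigma(l + u/2)$ with $\sigma(l + u/2) > 0$ for $\tn = \times$; nilpotence $B \tn^N p = 0$ for \L ukasiewicz), after which up"/directedness merges $\phi_1, \dotsc, \phi_N$ into a single $\psi \in \Phi$. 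This finiteness is precisely what fails for $\tn = \min$, where the levels stall at $\min\set{B, p}$; correspondingly, the failure in \exref{counter example condition d} sits in your second regime (in the tail values $\phi_i(\infty) = i < \frac12$), and for $\min$ the residuation $a \dashcirc b = b$ for $b < a$ means that even exact approximation of $\Psi$ along a tail does not push the hom towards $1$. Finally, no strengthening of your totally"/below strategy can absorb this case: the totally"/below relation quantifies over \emph{arbitrary} $S$ with $\Psi \leq \sup S$, so it would render directedness redundant, yet the proposition is false for non"/directed $\Phi$ even when $\tn = \times$ --- take $\Phi = \set{\phi_1, \phi_2}$ with $\phi_1$ the step of height $1$ at $1$ and $\phi_2$ constant $\frac12$ on $(0, \infty]$; then $\Psi \dashcirc \phi_i = \phi_i$ for both $i$, so that $\sup_i (\Psi \dashcirc \phi_i) = \Psi \neq k$.
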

	\begin{proof}
		Let us write $\sigma \dfn \sup\Phi$. Clearly if $\sigma = 0$, the bottom element of $\Delta_{\tn}$, then $\Phi = \set 0$ so that the identity above reduces to $0 \dashcirc 0 = k$ which immediately follows from the definition of $\dashcirc$. 
		
		Hence we assume $\sigma > 0$. For each $u \in (0, \infty)$ and $p \in (0, 1)$ let us denote by $\pi_{(u, p)}$ the distance distribution function given by
		\begin{displaymath}
			\pi_{(u, p)}(s) \dfn \begin{cases}
				0 & \text{if $s \leq u$;} \\
				p & \text{if $s > u$.}
			\end{cases}
		\end{displaymath}
		Because
		\begin{displaymath}
			\sup_{\substack{u \in (0, \infty) \\ p \in (0, 1)}} \pi_{(u, p)} = k \qquad \text{and} \qquad \sigma \dashcirc \psi = \sup \set{\chi \in \Delta_{\tn} \mid \sigma \tens \chi \leq \psi}
		\end{displaymath}
		(see \exref{V-profunctors}), it suffices to prove that for every $u \in (0, \infty)$ and every $p \in (0, 1)$ there is a $\psi \in \Phi$ with
		\begin{displaymath}
			\sigma \tens \pi_{(u, p)} \leq \psi.
		\end{displaymath}
		Unpacking the convolution product $\tens$ and using the left"/continuity of $\sigma$, we find that this means
		\begin{equation} \label{condition on psi}
			\sigma(s \ominus u) \tn p \leq \psi(s)
		\end{equation}
		for all $s \in \brks{0, \infty}$, where $\ominus$ denotes truncated difference.
		
		To show that we can find such $\psi \in \Phi$ for any $u \in (0, \infty)$ and $p \in (0, 1)$ we write
		\begin{displaymath}
			B \dfn \sigma(\infty) = \sup_{s \in (0, \infty)} \sigma(s) > 0
		\end{displaymath}
		and, for each $n \geq 1$,
		\begin{displaymath}
			t_n = \inf \set{s \in \brks{0, \infty} \mid \sigma(s \ominus u/2) > B \tn^n p}
		\end{displaymath}
		where
		\begin{displaymath}
			B \tn^n p \dfn \overbrace{(\dotsb((B \tn p) \tn p)\dotsb\tn p)}^\text{$n$ times}.
		\end{displaymath}
		As $u/2 > 0$ we have $\sigma(t_n) > B \tn^n p$ so that, for each $n \geq 1$, there must be some $\phi_n \dfn \phi \in \Phi$ with $\phi(t_n) > B \tn^n p$.
		
		Now for $n = 1$ we have for all $s \in \brks{t_1, \infty}$:
		\begin{displaymath}
			\phi_1(s) \geq \phi_1(t_1) > B \tn p \geq \sigma(s \ominus u) \tn p,
		\end{displaymath}
		where the last inequality follows from the definition of $B$. By definition of $t_n$ we have $\sigma(s \ominus u/2) \leq B \tn^n p$ for all $s < t_n$ so that, for all $n \geq 1$ and all $s \in [t_{n+1}, t_n)$ we have:
		\begin{displaymath}
			\phi_{n+1} (s) \geq \phi_{n+1}(t_{n+1}) > B \tn^{n+1} p \geq \sigma(s \ominus u/2) \tn p \geq \sigma(s \ominus u) \tn p.
		\end{displaymath}
		Finally, below we will show that in both cases of $\tn$ there is an integer $N \geq 1$ such that
		\begin{displaymath}
			\phi_N(s) \geq \sigma(s \ominus u) \tn p
		\end{displaymath}
		for all $s \in [0, t_N]$. We have thus found a finite number of distance distribution functions $\phi_1, \dotsc, \phi_N$ in $\Phi$ such that for each $s \in \brks{0, \infty}$ there is some $\phi_i$ with
		\begin{displaymath}
			\phi_i(s) \geq \sigma(s \ominus u) \tn p.
		\end{displaymath}
		As $\Phi$ is up"/directed it contains an upper bound $\psi$ of $\set{\phi_1, \dotsc, \phi_N}$. From the above it follows that $\psi$ satisfies \eqref{condition on psi} for all $s \in \brks{0, \infty}$, thus concluding the proof.
		
		It remains to show the existence of $N$. In the case that $\tn$ is the \L ukasiewicz operation, we take $N$ to be the minimal $n$ such that $B \tn^n p = 0$. Then $\sigma(t_N) = 0$ by the left"/continuity of $\sigma$ so that for all $s \in [0, t_N]$:
		\begin{displaymath}
			\phi_N(s) \geq 0 = \sigma(t_N) \tn p \geq \sigma(s \ominus u) \tn p.
		\end{displaymath}
		
		In the case that $\tn = \times$ we define $l \in [0, \infty)$ and $N \geq 1$ by
		\begin{displaymath}
			l \dfn \max\set{s \in \brks{0, \infty} \mid \sigma(s) = 0} \qquad \text{and} \qquad N \dfn \min\set{n \geq 1 \mid B \times p^n < \sigma(l + u/2)};
		\end{displaymath}
		that these extrema exist follows from the left"/continuity of $\sigma$ and the fact that $\sigma(l + u/2) > 0$. By definition of $t_N$ we have $t_N \leq l + u$. Hence for all $s \in \brks{0, t_N}$ we have $s \ominus u \leq l$ so that $\sigma(s \ominus u) = 0$. Thus
		\begin{displaymath}
			\phi_N(s) \geq 0 = \sigma(s \ominus u) \times p. \qedhere
		\end{displaymath}
	\end{proof}
	
	\begin{example} \label{counter example condition d}
		In the case of $\V = \Delta_{\min}$ (see \exref{Delta}), the assertion of the previous proposition is false in general. To see this consider the up"/directed set $\Phi \subset \Delta_{\min}$ consisting of the distance distribution functions
		\begin{displaymath}
			\map{\phi_i}{\brks{0, \infty}}{\brks{0, 1}}\colon \phi_i(t) = \begin{cases}
				t & \text{if $t \leq i$;} \\
				i & \text{if $t > i$,}
			\end{cases}
		\end{displaymath}
		where $i$ ranges over the real numbers in $(0, \frac12)$, so that $\sup \Phi = \phi_{\frac 12}$. We claim that the assertion of the previous proposition fails, that is $\sup_{i \in (0, \frac 12)} (\phi_{\frac 12} \dashcirc \phi_i) < k$, where $k$ is the unit of $\Delta_{\min}$. To see this remember that (see \exref{V-profunctors})
		\begin{equation} \label{condition d dashcirc}
			\phi_{\frac 12} \dashcirc \phi_i = \sup\set{\chi \in \Delta_{\min} \mid \phi_{\frac 12} \tens \chi \leq \phi_i}.
		\end{equation}
		For each $\chi \in \Delta_{\min}$ with $\phi_{\frac 12} \tens \chi \leq \phi_i$ we have
		\begin{displaymath}
			i = \phi_i(\infty) \geq (\phi_{\frac 12} \tens \chi)(\infty) = \min\bigbrcs{\phi_{\frac 12}(\infty), \chi(\infty)} = \min\bigbrcs{\textstyle\frac 12, \chi(\infty)},
		\end{displaymath}
		where the second equality follows easily from the definition of the convolution product $\tens$, see \exref{Delta}. Since $i < \frac 12$ we conclude that $\chi(\infty) \leq i < \frac 12$ for all $\chi$ in \eqref{condition d dashcirc}. Because $k(\infty) = 1$ it follows that $\sup_{i \in (0, \frac 12)} (\phi_{\frac 12} \dashcirc \phi_i) < k$, as claimed.
	\end{example}
	
	\section*{Acknowledgements} \addcontentsline{toc}{section}{Acknowledgements}
	This paper has benefitted from discussions with Bob Par\'e, during my visit to Dalhousie University in August 2016. In particular his suggestions led to \thmref{Beck-Chevalley} above. I would like to thank Bob, as well as the @CAT-group for partly funding my visit. I thank the anonymous referee for helpful suggestions and the prompt review of this paper.
	
	\section*{References}
  \bibliographystyle{elsart-num-sort}
	\bibliography{../_other/main}
\end{document}